\newcommand*\uA{\undertilde{A}}
\newcommand*\MALG{\mathrm{MALG}}
\newcommand*\INV{\mathrm{INV}}
\newcommand*\EINV{\mathrm{EINV}}
\newcommand*\fin{\mathrm{fin}}
\newcommand*\ap{\mathrm{ap}}
\DeclareMathOperator\dom{dom}
\DeclareMathOperator\rng{rng}
\DeclarePairedDelimiter\den{\llbracket}{\rrbracket}
\newcommand{\relmiddle}[1]{\nonscript\;\middle#1\nonscript\;}
\let\defn\textbf
\begin{document}

\title{Decompositions and measures on countable Borel equivalence relations}
\author{Ruiyuan Chen\thanks{Research partially supported by NSERC PGS D.}}
\date{}
\maketitle

\begin{abstract}
We show that the uniform measure-theoretic ergodic decomposition of a countable Borel equivalence relation $(X, E)$ may be realized as the topological ergodic decomposition of a continuous action of a countable group $\Gamma \curvearrowright X$ generating $E$.  We then apply this to the study of the cardinal algebra $\mathcal K(E)$ of equidecomposition types of Borel sets with respect to a compressible countable Borel equivalence relation $(X, E)$.  We also make some general observations regarding quotient topologies on topological ergodic decompositions, with an application to weak equivalence of measure-preserving actions.
\renewcommand\thefootnote{}
\footnote{2010 \emph{Mathematics Subject Classification}: Primary 03E15, 37A20.}
\footnote{\emph{Key words and phrases}: countable Borel equivalence relations, ergodic decomposition, cardinal algebras.}
\end{abstract}

\section{Introduction}

In this paper, we study several related constructions on a countable Borel equivalence relation.

In \cref{sec:topmeasdecomp}, we study the relation between two different notions of ergodic decomposition.  An action of a group $G$ via homeomorphisms on a Polish space $X$ is \defn{minimal} if $X \ne \emptyset$ and each orbit is dense; the \defn{topological ergodic decomposition} of an arbitrary action $G \curvearrowright X$ is the standard Borel decomposition of $X$ into minimal invariant $G_\delta$ subsets.  A countable Borel equivalence relation $E$ on a standard Borel space $X$ is \defn{uniquely ergodic} if it admits a unique ergodic invariant probability measure; the \defn{measure-theoretic ergodic decomposition} of an arbitrary $(X, E)$ is the standard Borel decomposition of $X$ into $E$-invariant, uniquely ergodic pieces.

We show that for a countable Borel equivalence relation $(X, E)$, generated by a countable Borel group action $\Gamma \curvearrowright X$, the measure-theoretic ergodic decomposition of $E$ may be realized \emph{via} the topological ergodic decomposition, with respect to some Polish topology on $X$ making the $\Gamma$-action continuous.  Moreover, we may pick the topology so as to include in the decomposition not only the invariant ergodic probability measures, but also all invariant ergodic $\sigma$-finite measures which are regular with respect to the topology, where ``regular'' means in the weak sense that there is some open set with finite positive measure.  Here is a rough statement of the result; see \cref{thm:topmeasdecomp}.

\begin{theorem}
\label{thm:intro-topmeasdecomp}
Let $(X, E)$ be a countable Borel equivalence relation, induced by a countable Borel group action $\Gamma \curvearrowright X$.  For cofinally many Polish topologies on $X$ inducing the Borel structure and making the $\Gamma$-action continuous, we have the following:
\begin{itemize}
\item  Each component of the topological ergodic decomposition of the action $\Gamma \curvearrowright X$ admits, up to scaling, at most one $E$-invariant $\sigma$-finite measure which is regular with respect to the topology; and such a measure is $E$-ergodic (if it exists).
\item  The set $R$ of components admitting such a measure, as well as the set $P \subseteq R$ of components admitting an $E$-invariant probability measure, are Borel.
\item  We have a Borel bijection between $R$ and the space of $E$-invariant ergodic regular $\sigma$-finite measures modulo scaling, taking a component in $R$ to the unique such measure on that component.  This restricts to a Borel bijection between $P$ and the space $\EINV_E$ of $E$-invariant ergodic probability measures, yielding the usual measure-theoretic ergodic decomposition.
\end{itemize}
\end{theorem}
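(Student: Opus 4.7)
The plan is to obtain the desired topology by refining an arbitrary starting Polish topology $\tau_0$ making the $\Gamma$-action continuous, using the standard technique (Kechris, \emph{Classical Descriptive Set Theory}, \S13) of adjoining countably many Borel sets as open while preserving Polishness, the Borel structure, and continuity of the action. Since such topologies are directed under refinement, producing a single refinement with the stated properties automatically yields cofinally many.

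The starting datum is the classical measure-theoretic ergodic decomposition, which gives a standard Borel space $\EINV_E$ and a Borel map $\pi\colon X \to \EINV_E$ whose fiber $\pi^{-1}(\mu)$ is $E$-invariant, of full $\mu$-measure, and ergodic with respect to $\mu$. A parallel decomposition for $E$-invariant ergodic $\sigma$-finite measures modulo scaling is also needed; one either establishes it directly via a Borel transversal and renormalization, or reduces it to the probability case on each component.

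The heart of the proof is choosing a countable Boolean algebra $\mathcal{B}$ of Borel sets to adjoin to $\tau_0$. It should (i) separate distinct ergodic components, so that after refinement the fibers of $\pi$ are topologically distinguished; and (ii) witness the ``topological support'' of every ergodic regular $\sigma$-finite measure on its component, so that every nonempty relatively open subset of the component has positive measure. Granting (ii), ergodicity forces the $E$-saturation of any such open set to be conull on the component, and a further thinning to a $G_\delta$ invariant subset on which every orbit is dense exhibits the component as a minimal invariant $G_\delta$ set. Uniqueness up to scaling of the invariant regular $\sigma$-finite measure on each such minimal set then follows by an equidecomposability argument: an open set $U$ of finite positive measure has $E$-translates covering the component (by minimality), so any two candidate measures must give $U$ proportional mass on pain of contradicting the common total mass of all of $U$'s translates.

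The main obstacle is arranging (ii) uniformly across the space of ergodic regular $\sigma$-finite measures modulo scaling, rather than only for probability measures. This space does not inherit a standard Borel structure from $\EINV_E$ in any obvious way, and since ``regular'' depends on the eventual topology, one must diagonalize: choose $\mathcal{B}$ dense enough that any ergodic $\sigma$-finite measure regular with respect to the \emph{final} topology is already fully supported on its component. Once the topology is in place, the Borelness of $R$ and $P$ and the claimed Borel bijections with the relevant spaces of measures follow by routine descriptive set-theoretic bookkeeping on the refined structure.
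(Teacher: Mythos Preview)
Your outline has the right overall shape, but the step you flag as ``the main obstacle'' is precisely where the argument needs content, and ``diagonalize'' is not enough. The paper's mechanism is as follows. Call a good topology $\tau$ one that \emph{splits} a Borel set $A$ if for every component $C$ of the topological ergodic decomposition, $E|(A\cap C)$ admits at most one invariant probability measure. For a fixed $A$ this is easy to arrange (make the fibers of the classical ergodic decomposition of $E|A$ clopen). The key lemma is that if $\tau$ splits every set in a countable basis, then it splits \emph{every} $\tau$-open set; this uses minimality of $E|C$ to pass from a basic open $V\subseteq U$ meeting $C$ back to $U$. One then iterates: refine $\tau_0$ to split a basis for $\tau_0$, refine again to split a basis for the new topology, and take the join. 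The resulting ``very good'' topologies form a club. Once $\tau$ splits all open sets, uniqueness up to scaling of non-totally-singular invariant $\sigma$-finite measures on each $C$ is immediate: find open $W$ with $0<\mu(W\cap C),\nu(W\cap C)<\infty$, use splitting of $W$ to get $\mu|(W\cap C)=r\,\nu|(W\cap C)$, and extend by the unique-extension lemma for invariant measures from a complete section. Your proposed uniqueness argument (``translates of $U$ cover $C$, compare total masses'') does not work as stated, since the translates overlap and their masses do not add.

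Two further points. First, the ``parallel decomposition for $E$-invariant ergodic $\sigma$-finite measures modulo scaling'' that you invoke as input does not exist a priori: that space has no evident standard Borel structure, and ``regular'' is defined relative to the topology you have not yet built. The paper avoids this entirely by working with the sets $P(U,\tau)=\{C: E|(U\cap C)\text{ has an invariant probability measure}\}$ for $\tau$-open $U$, and at the end produces an open $S\subseteq X$ on which every regular $\sigma$-finite ergodic measure restricts to a probability measure; the Borel bijection is then with $\EINV_{E|S}$. Second, the Borelness of $R$ and $P$ is not quite ``routine bookkeeping'': one must show that $R(\tau),P(\tau),P(U,\tau)$ are Borel for very good $\tau$ (via the injective pushforward $\EINV_{E|U}\to\mathcal P(X//E)$), and, more delicately, that they are stable under increasing joins of very good topologies, so that a further club argument makes them closed (then clopen after one more refinement). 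Without this stability, the iteration does not terminate with the desired properties.
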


In \cref{sec:kl}, we study the following canonical algebraic structure associated to a \emph{compressible} countable Borel equivalence relation $(X, E)$.  A \defn{cardinal algebra} \cite{Tar} is a set equipped with a countable addition operation, satisfying certain axioms motivated by cardinal arithmetic.  Cardinal algebras appear naturally in the study of group actions and paradoxical decompositions (see e.g., \cite{Ch1}), as well as the classification of Borel equivalence relations \cite{KMd}.  An example belonging to both of these contexts is the \defn{algebra $\@K(E)$ of equidecomposition types} of Borel sets $A \subseteq X$ with respect to a compressible equivalence relation $E$ on $X$.  We show that several well-known results about countable Borel equivalence relations translate to nice algebraic properties of $\@K(E)$ and a related ``completion'' algebra $\@L(E)$; see \cref{thm:k,thm:l,thm:l-duality}.

\begin{theorem}
\label{thm:intro-kl}
Let $(X, E)$ be a compressible countable Borel equivalence relation.
\begin{itemize}
\item  $\@K(E)$ is a cardinal algebra with finite meets, countable joins, and real multiples of elements represented by $E$-aperiodic Borel sets $A \subseteq X$, and obeys all Horn axioms involving these operations which hold in the algebra $[0, \infty]$.
\item  The completion $\@L(E) \supseteq \@K(E)$ by adjoining real multiples for \emph{all} elements of $\@K(E)$ can be naturally viewed as a cardinal algebra of $E$-equidecomposition types of Borel real-valued functions on $X$.  $\@L(E)$ has finite meets, countable joins, and real multiples of all elements, and obeys all Horn axioms involving these operations which hold in the algebra $[0, \infty]$.
\item  Homomorphisms $\@K(E) -> [0, \infty]$ preserving the above operations are in canonical bijection with $E$-invariant $E$-ergodic measures; the same holds for $\@L(E)$ in place of $\@K(E)$.
\item  The space $\EINV^\sigma_E$ of all $\sigma$-finite such measures forms a ``dual'' of $\@L(E)$, from which $\@L(E)$ may be recovered as the ``double dual''.
\end{itemize}
\end{theorem}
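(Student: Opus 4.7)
The proof splits naturally along the four bullets, each essentially an algebraic reflection of the structure theory of compressible countable Borel equivalence relations. I would lean throughout on two core facts: that for compressible $E$ any Borel set admits Borel decompositions into countably many pairwise equidecomposable pieces (supplying the ``room'' to realize sums disjointly), and that equidecomposition of Borel sets is controlled by $\sigma$-finite $E$-invariant measures together with ``ghost'' invariants for aperiodic pieces carrying no such measure.

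For the first bullet I would verify the Tarski cardinal algebra axioms directly: countable addition descends from disjointification, and the refinement postulate follows by a back-and-forth on partial Borel bijections witnessing equidecompositions. Finite meets come from exhaustion by a maximal common subpiece; countable joins are dual. Real multiples of $E$-aperiodic $[A]$ are constructed via a Borel realization of ``fractional proportion'' inside $A \cong A \times \mathbb{N}$, exploiting compressibility to set up a Borel marker. For the Horn axiom clause, I would embed $\@K(E)$ into a product of copies of $[0,\infty]$ indexed by the full family of operation-preserving functionals $\@K(E) \to [0,\infty]$, which by the third bullet correspond to $\sigma$-finite ergodic invariant measures (possibly augmented by auxiliary functionals distinguishing purely compressible invariant-measure-free pieces); the embedding is injective by the measure-theoretic characterization of equidecomposition, and any Horn sentence valid in $[0,\infty]$ transfers automatically.

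For the second and third bullets I would realize $\@L(E)$ concretely as the set of equidecomposition types of sub-hypographs $\{(x,t) \in X \times [0,\infty) : t < f(x)\}$ of Borel $[0,\infty]$-valued functions $f$ on $X$, under the product equivalence relation $E \times \Delta_{[0,\infty)}$ (with $\Delta_{[0,\infty)}$ denoting equality on $[0,\infty)$). The cardinal algebra structure is inherited from $\@K(E \times \Delta_{[0,\infty)})$, real scalars act by rescaling the second coordinate, and $\@K(E) \hookrightarrow \@L(E)$ via $A \mapsto \chi_A$. Approximating Borel $[0,\infty]$-valued functions by Borel simple functions and using countable additivity exhibits $\@L(E)$ as the universal completion by real multiples. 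For the homomorphism bijection, given operation-preserving $\varphi : \@K(E) \to [0,\infty]$, setting $\mu(A) := \varphi([A])$ yields a countably additive $E$-invariant measure whose ergodicity follows from preservation of binary meets (which forces $\mu(A) \in \{0,\infty\}$ on each $E$-invariant $A$). The converse is by $\varphi([A]) := \mu(A)$, and the analogue for $\@L(E)$ follows by integration of Borel functions against $\mu$.

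The fourth bullet is the principal obstacle. I would define the double dual $\@L(E)^{**}$ as the set of abstract functionals $\EINV^\sigma_E \to [0,\infty]$ satisfying an appropriate Borel-measurability and operation-preservation condition, with the canonical map $\@L(E) \to \@L(E)^{**}$ sending $[f]$ to $\mu \mapsto \int f \, d\mu$. Injectivity reduces to showing that two Borel $[0,\infty]$-valued functions with equal integrals against every $\sigma$-finite ergodic invariant measure are $E$-equidecomposable; I would reduce this to the corresponding statement about Borel sets in $X \times [0,\infty)$ under $E \times \Delta_{[0,\infty)}$ and then apply \cref{thm:intro-topmeasdecomp}, using that on each uniquely ergodic fiber equidecomposition is determined by the fiber measure. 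Surjectivity is the genuinely hard part: given abstract $\Phi$, one must produce a Borel $f$ with $\int f\,d\mu = \Phi(\mu)$ for every $\mu \in \EINV^\sigma_E$. Here I would fix a Polish topology supplied by \cref{thm:intro-topmeasdecomp} realizing the ergodic decomposition topologically, then carry out a Borel-in-$\mu$ selection of $f$ on each ergodic fiber realizing the value $\Phi(\mu)$; the Borel structure on $\EINV^\sigma_E$ from that theorem is precisely what makes such a selection measurable and the resulting global $f$ well-defined up to equidecomposition.
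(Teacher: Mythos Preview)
Your outline has the right overall architecture, but there are two genuine errors, one in each of the last two bullets, and a smaller slip in the first.

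\textbf{The realization of $\@L(E)$ via hypographs under $E \times \Delta_{[0,\infty)}$ is wrong.} Take a single $E$-class containing distinct points $y,z$, and compare $\alpha = 2\chi_{\{y\}}$ with $\beta = \chi_{\{y\}} + \chi_{\{z\}}$. In the paper's sense these are equidecomposable (transport one unit of mass from $y$ to $z$), and any reasonable definition of $\@L(E)$ must identify them. But their hypographs $\{y\} \times [0,2)$ and $\{y,z\} \times [0,1)$ are \emph{not} $(E \times \Delta)$-equidecomposable: any bijection with graph in $E \times \Delta$ must preserve the second coordinate, and there is nowhere to send $(y,t)$ for $t \in [1,2)$. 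The paper instead defines equidecomposition of $\alpha,\beta \in \@C(X)$ via Borel ``transport plans'' $\phi : E \to [0,\infty]$ whose marginals are $\alpha$ and $\beta$; transitivity of this relation is nontrivial and requires a Borel refinement lemma for $[0,\infty]$ (\cref{lm:r-refine-borel}). Your hypograph picture could be salvaged by allowing a measure-preserving rearrangement of the vertical fibers, but then it is no longer a countable Borel equivalence relation and you cannot simply inherit the cardinal algebra structure from a $\@K(\cdot)$.

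\textbf{``Borel-measurability'' is not the right condition on the double dual.} The paper explicitly shows (\cref{rmk:einvs-borel}) that there exist Borel, $(0,\infty)$-equivariant maps $\EINV^\sigma_E \to [0,\infty]$ that are \emph{not} of the form $\mu \mapsto \int \alpha\,d\mu$: on $(\#R, E_v)$, the map sending $\mu$ to $0$ or $\infty$ according to whether $\mu((0,1))$ is finite is a counterexample. The correct structure on $\EINV^\sigma_E$ is a $\sigma$-\emph{topology} (closed under countable unions and finite intersections, generated by the sets $\{\mu : \mu(A) > r\}$), and the double dual consists of the $\sigma$-continuous equivariant maps. Your surjectivity sketch via \cref{thm:intro-topmeasdecomp} is in the right spirit and matches the paper's proof, but the finiteness of intersections in the $\sigma$-topology is used essentially in the last case of that argument, so the hypothesis cannot be weakened to Borel.

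\textbf{Minor point on the first bullet.} You write that real multiples of aperiodic $\~A$ come from $A \cong A \times \#N$, but aperiodicity of $E|A$ does not imply compressibility of $E|A$ (take $E = E_0 \times I_\#N$ and $A = 2^\#N \times \{0\}$). What is needed, and what the paper uses, is that aperiodic $E|A$ admits a finite Borel subequivalence relation with classes of any prescribed size (\cref{lm:aperiodic-fse}), giving divisibility by every $n$. Relatedly, your ``exhaustion by a maximal common subpiece'' for meets is not how the paper proceeds: the key device is the Becker--Kechris comparison lemma (\cref{lm:compare}), which produces an $E$-invariant partition $X = Y \sqcup Z$ on which $A \preceq_E B$ and $A \succ_E B$ respectively; the meet is then represented by $(A \cap Y) \cup (B \cap Z)$. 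This same lemma is what drives the ergodicity $\Leftrightarrow$ meet-preservation equivalence.
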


We begin in \cref{sec:topdecomp} with some general observations regarding topological ergodic decompositions.
Let $G$ be a group acting via homeomorphisms on a Polish space $X$.
By passing to the realm of \defn{quasi-Polish spaces} \cite{deB}, a possibly non-Hausdorff generalization of Polish spaces, we may realize the topological ergodic decomposition of $G \curvearrowright X$ in a canonical way: as the quasi-Polish $T_0$-quotient of the quotient space $X/G$.
We give a simple application of this fact: in the case of the space $\uA(\Gamma, X, \mu)$ of weak equivalence classes of measure-preserving actions of a countable group $\Gamma$ (recently studied by several authors; see \cite{BuK} for a survey), the quasi-Polish topology encodes both the usual compact Hausdorff topology and the weak containment partial ordering.

The appendix contains some technical facts about quasi-Polish spaces which are needed in the rest of the paper.

\paragraph*{Acknowledgments}

I would like to thank Alexander Kechris, Anush Tserunyan, and Matthew de~Brecht for several helpful discussions and comments.
I would also like to thank the anonymous referee for many detailed comments and suggestions, especially for pointing out a flaw in an alternative proof of \cref{thm:k-meetjoin} that I previously gave.

\section{Topological ergodic decompositions}
\label{sec:topdecomp}

Let $X$ be a Polish space and $G$ be a group acting via homeomorphisms on $X$.  Define the preordering $\preccurlyeq_G$ on $X$ by
\begin{align*}
x \preccurlyeq_G y &\iff x \in \-{G \cdot y} \iff \-{G \cdot x} \subseteq \-{G \cdot y}.
\end{align*}
The symmetric part $\approx_G$ of $\preccurlyeq_G$, given by
\begin{align*}
x \approx_G y &\iff x \preccurlyeq_G y \AND y \preccurlyeq_G x \iff \-{G \cdot x} = \-{G \cdot y},
\end{align*}
is a $G_\delta$ equivalence relation, hence smooth (see e.g., \cite{Mil}).  The quotient space $X/{\approx_G}$ is called the \defn{topological ergodic decomposition} of the action $G \curvearrowright X$, and is a standard Borel space, partially ordered by (the quotient of) $\preccurlyeq_G$, and equipped with the projection map $X ->> X/{\approx_G}$ which is invariant Borel and whose fibers are the minimal $G$-invariant $G_\delta$ subsets of $X$.  See e.g., \cite[10.3]{Kgaega}.

Recall that on an arbitrary topological space $X$, the \defn{specialization preordering} is defined by
\begin{align*}
x \lesim y &\iff x \in \-{\{y\}} \iff \forall \text{open } U \subseteq X\, (x \in U \implies y \in U).
\end{align*}
The specialization preordering is a partial order iff $X$ is $T_0$, and is discrete iff $X$ is $T_1$.  Note that the specialization preordering on the quotient space $X/G$ (with the quotient topology) is given by
\begin{align*}
[x]_G \lesim [y]_G &\iff x \in \-{G \cdot y} \iff x \preccurlyeq_G y.
\end{align*}
Hence, the $T_0$-quotient of $X/G$, which we denote by
\begin{align*}
X//G,
\end{align*}
is in canonical bijection with the topological ergodic decomposition $X/{\approx_G}$; and its specialization ordering agrees with $\preccurlyeq_G$.  We henceforth identify $X//G$ with $X/{\approx_G}$ (i.e., we regard the elements of $X//G$ as equivalence classes of elements of $X$, not of $X/G$).

Note that open sets in $X//G$ lift to $G$-invariant open sets $U \subseteq X$; for such $U$, we write the corresponding open set in $X//G$ as
\begin{align*}
U//G \subseteq X//G,
\end{align*}
and similarly for closed sets.

Next, we observe that the quotient topology on $X//G$, though not necessarily Hausdorff, is nonetheless well-behaved.  A \defn{quasi-Polish space} \cite{deB} is a $\*\Pi^0_2$ subset of $\#S^\#N$, where $\#S = \{0 < 1\}$ with the Sierpiński topology ($\{0\}$ closed but not open), and where $\*\Pi^0_2$ means a countable intersection of sets of the form $U \cup F$ with $U$ open and $F$ closed.  Quasi-Polish spaces are closed under countable products, countable disjoint unions, $\*\Pi^0_2$-subsets, and continuous open $T_0$ images; are Polish iff they are regular; can be made Polish by adjoining countably many closed sets to the topology; and induce a standard Borel structure (see \cite{deB} or \cite{Ch} for proofs of these basic facts).  Since the projection $X ->> X//G$ is clearly open, $X//G$ is quasi-Polish, hence standard Borel.  It is easily seen that the Borel structure agrees with the quotient Borel structure induced from $X$, i.e., the usual Borel structure on $X/{\approx_G}$.

Finally, we note that we may consider the following slightly more general context.  Let $X$ be a quasi-Polish space and $E$ be an equivalence relation on $X$ such that the $E$-saturation of every open set $U \subseteq X$ is open.  Then $X/E$ has specialization preorder
\begin{align*}
[x]_E \lesim [y]_E \iff x \in \-{[y]_E};
\end{align*}
we denote this by $x \preccurlyeq_E y$ and its symmetric part by $x \approx_E y$.  So the $T_0$-quotient, denoted
\begin{align*}
X//E \cong X/{\approx_E},
\end{align*}
is the topological ergodic decomposition of $X$ into \defn{$E$-minimal} (meaning each $E$-class is dense) components.  The condition on saturations of open sets ensures that the projection $X ->> X//E$ is open, whence $X//E$ is quasi-Polish (in particular standard Borel).  As before, we identify $X//E$ with $X/{\approx_E}$, and we write $U//E \subseteq X//E$ for the open set corresponding to $E$-invariant open $U \subseteq X$.  For $x \in X$, we put
\begin{align*}
\den{x}_E := [x]_{\approx_E}.
\end{align*}
We recover the earlier case of a $G$-action by taking $E$ to be an orbit equivalence relation $E_G$.

We summarize these observations as follows:

\begin{proposition}
\label{thm:topdecomp}
Let $X$ be a (quasi-)Polish space and $E$ be an equivalence relation on $X$ such that the $E$-saturation of every open $U \subseteq X$ is open.  Then the $T_0$-quotient $X//E$ of the quotient $X/E$ is a quasi-Polish space, and the projection $p : X ->> X//E$ is open with kernel
\begin{align*}
x \approx_E y \iff \-{[x]_E} = \-{[y]_E},
\end{align*}
hence $X//E$ is the topological ergodic decomposition of $(X, E)$.  Moreover, the specialization order on $X//E$, given by
\begin{align*}
\den{x}_E \le \den{y}_E \iff x \preccurlyeq_E y \iff x \in \-{[y]_E}
\end{align*}
(where $\den{x}_E := [x]_{\approx_E}$), is the canonical partial order on the topological ergodic decomposition.  Finally, $\approx_E$ has a Borel selector, i.e., the projection $p$ has a Borel section $s : X//E `-> X$.
\end{proposition}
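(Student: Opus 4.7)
The plan is to verify the assertions in sequence, combining the openness hypothesis with the closure properties of quasi-Polish spaces recalled just before the statement.

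First I factor the projection as $p = \pi \circ q$, where $q : X \to X/E$ is the raw $E$-quotient and $\pi : X/E \to X//E$ the $T_0$-quotient. The map $q$ is open by hypothesis, since the image of an open $U \subseteq X$ has preimage the $E$-saturation $[U]_E$, which is open. The map $\pi$ is open for purely formal reasons: in any topological space, every open set is automatically saturated under the specialization equivalence, since along with each point $x$ it contains every $y$ with $\overline{\{x\}} = \overline{\{y\}}$. Hence $p$ is a continuous open surjection onto the $T_0$-space $X//E$, and $X//E$ is quasi-Polish by the stated closure of quasi-Polish spaces under continuous open $T_0$-images.

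Next I identify the kernel of $p$ with $\approx_E$. Two classes $[x]_E, [y]_E$ are specialization-equivalent in $X/E$ iff they meet the same $E$-invariant open sets, or dually the same $E$-invariant closed sets. The saturation hypothesis implies that $\overline{[x]_E}$ is itself $E$-invariant: given $z' E z$ with $z \in \overline{[x]_E}$ and open $U \ni z'$, the saturation $[U]_E$ is open, $E$-invariant, and contains $z$, hence meets $[x]_E$, so $U$ does too. Therefore $\overline{[x]_E}$ is the minimal $E$-invariant closed set through $x$, and the specialization preorder pulls back along $q$ to $x \in \overline{[y]_E}$, i.e.\ $x \preccurlyeq_E y$. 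Symmetrizing gives the description of $\approx_E$, and the specialization order on $X//E$ inherits the corresponding description.

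Finally, for the Borel selector: $X//E$ is quasi-Polish and hence standard Borel, and $p$ is continuous, a fortiori Borel. Being a continuous open surjection between quasi-Polish spaces, $p$ falls under a standard selection theorem of the sort collected in the appendix, yielding a Borel section $s : X//E \hookrightarrow X$. The one conceptual step worth highlighting is the kernel identification---exactly where the hypothesis on $E$-saturations of opens gets used, via the observation that $\overline{[x]_E}$ is itself $E$-invariant; the remainder is bookkeeping on top of cited closure and selection theorems for quasi-Polish spaces.
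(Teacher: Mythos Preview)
Your proposal is correct and follows essentially the same approach as the paper, which simply defers to the preceding discussion plus a cited selection theorem for continuous open surjections between quasi-Polish spaces. Your explicit verification that $\overline{[x]_E}$ is $E$-invariant (and hence that the specialization preorder on $X/E$ pulls back to $\preccurlyeq_E$) fills in a step the paper leaves implicit, but the overall structure---openness of the projection, closure of quasi-Polish spaces under open $T_0$ images, identification of the kernel, and citation for the Borel section---matches the paper's argument.
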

\begin{proof}
The last statement about the Borel section is a general fact about continuous open maps between quasi-Polish spaces; see e.g., \cite[7.9]{Ch}.  Everything else follows from the above discussion.
\end{proof}

\begin{remark}
De~Brecht has pointed out that conversely, every quasi-Polish space can be expressed as the topological ergodic decomposition (indeed, the quotient) of some Polish space by a Polish group action.  This may be seen as follows: we have a continuous open surjection $q : [0, \infty) ->> \#S$ sending $0$ to $0$ and $(0, \infty) \subseteq [0, \infty)$ to $1$, which is the quotient of $[0, \infty)$ by the multiplicative action of $(0, \infty)$; then $q^\#N : [0, \infty)^\#N ->> \#S^\#N$ is the quotient of the product action of $(0, \infty)^\#N$, and so a $\*\Pi^0_2$ subset $X \subseteq \#S^\#N$ is the quotient $(q^\#N)^{-1}(X)/(0, \infty)^\#N$.
\end{remark}

\subsection{Change of topology}
\label{sec:topdecomp-topchng}

We now record some technical facts, needed in \cref{sec:topmeasdecomp}, concerning the behavior of $X//E$ upon changing the topology of $X$.

Let $X$ be a quasi-Polish space and $E$ be an equivalence relation on $X$ such that the $E$-saturation of every open set is open, as in \cref{thm:topdecomp}.  Let $\tau$ be the topology of $X$.  Since we will be considering other topologies, we write $\den{x}_E^\tau$ for $\den{x}_E \in (X, \tau)//E$ when necessary to avoid confusion; similarly, we write $\approx_E^\tau$ for $\approx_E$, etc.

\begin{lemma}
\label{thm:topdecomp-closed}
Let $F_0, F_1, \dotsc \subseteq X$ be countably many $E$-invariant $\tau$-closed sets, and let $\tau' \supseteq \tau$ be the finer topology obtained by adjoining the $F_i$ to $\tau$.  Then the $E$-saturation of every $\tau'$-open set is $\tau'$-open (as in \cref{thm:topdecomp}), and $(X, \tau')//E$ is $(X, \tau)//E$ with the closed sets $F_i//E$ adjoined to its topology.  (In particular, $(X, \tau)//E = (X, \tau')//E$ as sets, i.e., the topological ergodic decompositions with respect to $\tau, \tau'$ have the same components.)
\end{lemma}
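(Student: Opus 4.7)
The plan is to verify three claims in turn: (i) $\tau'$ is quasi-Polish and the $E$-saturation of every $\tau'$-open set is $\tau'$-open; (ii) $\approx_E^\tau$ and $\approx_E^{\tau'}$ coincide as equivalence relations on $X$, so the two decompositions agree as sets; and (iii) the resulting quotient topology on $X//E$ equals that of $(X,\tau)//E$ with the $F_i//E$ adjoined as open sets.

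For (i), basic $\tau'$-opens have the form $V = U \cap F_{i_1} \cap \dotsb \cap F_{i_n}$ with $U \in \tau$, and $E$-invariance of each $F_{i_j}$ yields $[V]_E = [U]_E \cap F_{i_1} \cap \dotsb \cap F_{i_n} \in \tau'$, since $[U]_E \in \tau$ by hypothesis on $\tau$; and saturation commutes with unions. Quasi-Polishness of $(X,\tau')$ is the standard fact (noted in \cref{sec:topdecomp}) that adjoining countably many closed sets preserves quasi-Polishness; concretely, $(X,\tau')$ decomposes as a countable disjoint union of locally closed subspaces of $(X,\tau)$ indexed by Boolean combinations of the $F_i$.

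The heart of the argument is (ii). I will first record the characterization $x \approx_E y \iff x,y$ lie in exactly the same $E$-invariant open sets, which follows from the saturation hypothesis by replacing any open neighborhood by its $E$-saturation. The inclusion $\approx_E^{\tau'} \subseteq \approx_E^\tau$ is then immediate from $\tau \subseteq \tau'$. The crucial observation for the reverse is that each $F_i^c$ is \emph{already} an $E$-invariant $\tau$-open set, so $x \approx_E^\tau y$ already forces $x \in F_i \iff y \in F_i$ for every $i$. Combined with agreement on every $E$-invariant $\tau$-open, this yields agreement on every basic $E$-invariant $\tau'$-open $[U]_E \cap F_{i_1} \cap \dotsb \cap F_{i_n}$, and hence on every $E$-invariant $\tau'$-open (which is a union of such). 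The very same description of basic $E$-invariant $\tau'$-opens then yields (iii) at once: their images in $X//E$ are precisely the sets $U//E \cap F_{i_1}//E \cap \dotsb \cap F_{i_n}//E$, which generate exactly the topology on $(X,\tau)//E$ with the $F_i//E$ declared open.

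The main obstacle is step (ii), since one might naively expect a finer topology to induce a strictly finer $\approx_E$. The resolution is precisely that the $F_i$ were already $E$-invariant and $\tau$-closed, so their $\tau$-open complements $F_i^c$ already witnessed any separation of $\approx_E$-classes that the new clopen $F_i$'s could provide.
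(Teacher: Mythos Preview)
Your proof is correct and follows essentially the same route as the paper: both compute the $E$-saturation of a basic $\tau'$-open set as $[U]_E \cap F_{i_1} \cap \dotsb \cap F_{i_n}$, deduce that the $E$-invariant $\tau'$-open sets are exactly those in the topology generated by the $E$-invariant $\tau$-open sets together with the $F_i$, and conclude from this. The paper leaves the last deduction as ``the latter easily implies\ldots'', whereas you spell out explicitly the key point---that each $F_i^c$ is already an $E$-invariant $\tau$-open set, so $\approx_E^\tau$ already separates points by membership in the $F_i$---which is exactly the content hidden in the paper's one-line finish.
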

\begin{proof}
A basic $\tau'$-open set is of the form $V = U \cap F_{i_0} \cap \dotsb \cap F_{i_{n-1}}$ for a $\tau$-open set $U$; since the $F_i$ are $E$-invariant, the saturation of $V$ is $[U]_E \cap F_{i_0} \cap \dotsb \cap F_{i_{n-1}}$ which is $\tau'$-open.  This shows that $(X, \tau'), E$ also obey the hypotheses of \cref{thm:topdecomp}, as well as that every $E$-invariant $\tau'$-open set belongs to the topology generated by the $E$-invariant $\tau$-open sets along with the $F_i$; the latter easily implies that $(X, \tau)//E$ and $(X, \tau')//E$ are related in the claimed manner.
\end{proof}

\begin{corollary}
\label{thm:topdecomp-polish}
Under the hypotheses of \cref{thm:topdecomp}, we may adjoin countably many $E$-invariant closed sets to the topology of $X$, such that $X//E$ retains the same elements but becomes Polish.
\end{corollary}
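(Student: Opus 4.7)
The plan is a direct assembly of two pre-existing ingredients: Lemma \ref{thm:topdecomp-closed}, and the general fact (stated in the introduction, and proved in \cite{deB}) that every quasi-Polish space becomes Polish upon adjoining countably many closed sets to its topology. Here is how I would carry it out.

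First, I would use \cref{thm:topdecomp} to recall that $X//E$ is already quasi-Polish. Applying the cited fact about quasi-Polish spaces, I pick countably many closed sets $C_0, C_1, \dotsc \subseteq X//E$ whose adjunction to the topology of $X//E$ yields a Polish refinement of it (with the same underlying set).

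Next, I would lift each $C_i$ back to $X$ by setting $F_i := p^{-1}(C_i) \subseteq X$, where $p : X \twoheadrightarrow X//E$ is the projection. Each $F_i$ is closed in $X$ by continuity of $p$, and is $E$-invariant because its $p$-fibers are $\approx_E$-classes, which are unions of $E$-classes. Hence the family $(F_i)_i$ satisfies the hypotheses of \cref{thm:topdecomp-closed}.

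Finally, I would apply \cref{thm:topdecomp-closed} to this family: the refined topology $\tau' \supseteq \tau$ obtained by adjoining the $F_i$ still satisfies the open-saturation hypothesis of \cref{thm:topdecomp}, and $(X, \tau')//E$ has the same underlying set as $(X, \tau)//E$, with topology generated by the original topology of $(X, \tau)//E$ together with the closed sets $F_i // E$. Since $p$ is surjective and each $F_i$ is saturated, $F_i // E = p(F_i) = C_i$, so the resulting topology on $X//E$ is exactly the Polish refinement chosen in the first step. There is no substantive obstacle; the work has been done by \cref{thm:topdecomp-closed} and by the basic theory of quasi-Polish spaces.
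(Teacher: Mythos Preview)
Your proof is correct and follows essentially the same approach as the paper: both arguments pick countably many closed sets in $X//E$ whose adjunction makes it Polish, lift them via $p^{-1}$ to $E$-invariant closed sets in $X$, and invoke \cref{thm:topdecomp-closed}. Your write-up is simply more explicit about the details (e.g., checking $E$-invariance of $p^{-1}(C_i)$ and verifying $F_i//E = C_i$).
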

\begin{proof}
Find countably many closed sets $F_i//E \subseteq X//E$, the quotients of $E$-invariant closed sets $F_i \subseteq X$, such that adjoining the $F_i//E$ to the topology of $X//E$ makes $X//E$ Polish (e.g., by embedding $X//E$ as a $\*\Pi^0_2$ subspace of $\#S^\#N$); then adjoin the $F_i$ to the topology of $X$.
\end{proof}

If $\tau \subseteq \tau'$ are two topologies on $X$, both satisfying the hypotheses of \cref{thm:topdecomp}, then the two topological ergodic decompositions are related by a quotient map
\begin{align*}
(X, \tau')//E &->> (X, \tau)//E \\
\den{x}_E^{\tau'} &|-> \den{x}_E^\tau.
\end{align*}
Suppose now that we have a sequence of quasi-Polish topologies $\tau_0 \subseteq \tau_1 \subseteq \dotsb$ on $X$, each with the property that the $E$-saturation of an open set is open.  We then have an inverse sequence
\begin{align*}
\dotsb ->> (X, \tau_2)//E ->> (X, \tau_1)//E ->> (X, \tau_0)//E,
\end{align*}
of which we may take the \defn{inverse limit}
\begin{align*}
\projlim_i (X, \tau_i)//E
&= \{(\den{x_i}_E^{\tau_i})_i \in \prod_i (X, \tau_i)//E \mid \forall i\, (\den{x_{i+1}}_E^{\tau_i} = \den{x_i}_E^{\tau_i})\} \\
&= \{(\den{x_i}_E^{\tau_i})_i \in \prod_i (X, \tau_i)//E \mid \forall i\, (x_{i+1} \approx_E^{\tau_i} x_i)\},
\end{align*}
equipped with the subspace topology (which is quasi-Polish, since equality is $\*\Pi^0_2$).
The union of the $\tau_i$ generates a quasi-Polish topology $\tau$ \cite[Lemma~72]{deB} on $X$, the \defn{join} of the $\tau_i$.  We have the quotient maps $(X, \tau)//E ->> (X, \tau_i)//E$ for each $i$; these induce a comparison map
\begin{align*}
h : (X, \tau)//E &--> \projlim_i (X, \tau_i)//E \\
\den{x}_E^\tau &|--> (\den{x}_E^{\tau_i})_i.
\end{align*}

\begin{lemma}
\label{thm:topdecomp-filtcolim}
Under the above hypotheses, $h$ is a homeomorphism.
\end{lemma}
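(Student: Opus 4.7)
The plan is to show $h$ is a continuous open bijection, hence a homeomorphism. Continuity and well-definedness are immediate from the universal property of the inverse limit: since $\tau_i \subseteq \tau$, $\tau$-closures are contained in $\tau_i$-closures, so $\approx_E^\tau$ refines each $\approx_E^{\tau_i}$ and the canonical quotients $(X,\tau)//E \to (X,\tau_i)//E$ are continuous, assembling into $h$. For injectivity, I establish $\approx_E^\tau = \bigcap_i \approx_E^{\tau_i}$: the nontrivial inclusion uses that any basic $\tau$-open is a finite intersection of $\tau_i$-opens, hence $\tau_m$-open for the largest index $m$ involved by the chain condition; so if $x \approx_E^{\tau_i} y$ for all $i$, any $\tau$-neighborhood of $x$ contains a $\tau_m$-neighborhood meeting $[y]_E$, giving $x \in \overline{[y]_E}^\tau$, and symmetrically $x \approx_E^\tau y$.

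For openness, any $E$-invariant $\tau$-open $V$ decomposes as $V = \bigcup_k [V_k]_E$ with each $V_k$ a basic $\tau$-open (hence $\tau_{m_k}$-open for some $m_k$, by the chain), whose $E$-saturation $[V_k]_E$ remains $\tau_{m_k}$-open by the saturation hypothesis. Then $h$ carries the open set $[V_k]_E//E \subseteq (X,\tau)//E$ bijectively onto $\pi_{m_k}^{-1}([V_k]_E//E) \cap \mathrm{im}(h)$ (now with $[V_k]_E//E \subseteq (X,\tau_{m_k})//E$), making $h$ a homeomorphism onto its image; combined with surjectivity, this yields openness onto all of $\projlim_i (X,\tau_i)//E$.

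The main obstacle is surjectivity. Given a compatible $\bar z = (\den{z_i}_E^{\tau_i})_i$---which forces $z_j \approx_E^{\tau_i} z_i$ for all $j \geq i$---I must produce a single $y \in X$ with $y \approx_E^{\tau_i} z_i$ for every $i$. Density of $\mathrm{im}(h)$ is straightforward: any basic neighborhood $\bigcap_k \pi_{i_k}^{-1}(W_k)$ of $\bar z$ contains $h(\den{z_M}_E^\tau)$ for $M := \max_k i_k$, since $z_M$ lies in each $E$-invariant $\tau_{i_k}$-open representative of $W_k$ (these being $\approx_E^{\tau_{i_k}}$-invariant). To upgrade density to surjectivity, my plan is to apply \cref{thm:topdecomp-polish} to Polishify each $\tau_i$ (preserving the ascending chain), fix a complete compatible metric $d$ on $(X,\tau)$, and inductively build a sequence of representatives $y_n$ in the $\approx_E^{\tau_n}$-class of $z_n$ that is Cauchy in $d$, using the Borel sections from \cref{thm:topdecomp} and arranging that $y_n$ sits strictly inside the basic $\tau_n$-opens witnessing the $G_\delta$ presentation of its $\approx_E^{\tau_n}$-class (each class being the fiber of a continuous map to a $T_0$ quasi-Polish space, hence $\Pi^0_2$). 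With care, the $\tau$-limit $y := \lim_n y_n$ then lies in every $\approx_E^{\tau_i}$-class, completing surjectivity. The delicate inductive step---ensuring the finer $\approx_E^{\tau_{n+1}}$-class is sufficiently $\tau$-dense near the previous representative $y_n$ to permit the required metric approximation---is where I expect the main technical work of the proof to lie.
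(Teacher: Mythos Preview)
Your embedding argument (injectivity plus openness onto the image) is essentially the paper's: it shows every open $U//E \subseteq (X,\tau)//E$ is $h^{-1}$ of an open set in the inverse limit by writing $U = \bigcup_i U_i$ with $U_i \in \tau_i$, and concludes $h$ is an embedding since $(X,\tau)//E$ is $T_0$.

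The genuine gap is surjectivity, which you yourself flag as unfinished. The paper does \emph{not} build a Cauchy sequence; it invokes a general appendix result (\cref{thm:opensurj-filtcolim}), which in turn rests on a Baire category argument (\cref{thm:dense-cofiltlim}). Unwound in this situation, the key missing observation is this density fact: for each $i$, the class $\den{z_{i+1}}_E^{\tau_{i+1}}$ is $\tau_i$-dense in $\den{z_i}_E^{\tau_i}$. Indeed, if $U \in \tau_i$ meets $\den{z_i}_E^{\tau_i}$ at some $u$, then $u \approx_E^{\tau_i} z_{i+1}$, so $u \in \overline{[z_{i+1}]_E}^{\tau_i}$, so $U$ meets $[z_{i+1}]_E$ at some $w$; but $w \mathrel{E} z_{i+1}$ forces $w \approx_E^{\tau_{i+1}} z_{i+1}$, giving $w \in U \cap \den{z_{i+1}}_E^{\tau_{i+1}}$. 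Once you have this, the nested classes $\den{z_0}_E^{\tau_0} \supseteq \den{z_1}_E^{\tau_1} \supseteq \dotsb$ form a decreasing sequence of $\*\Pi^0_2$ sets, each dense in the previous one in the relevant topology, and a Baire category argument (packaged as \cref{thm:dense-cofiltlim}) yields a point in the intersection. Your Cauchy scheme could be made to work with this density in hand, but the step you call ``delicate'' --- finding $y_{n+1}$ in the finer class $\tau$-close to $y_n$ --- needs precisely this, and your invocation of \cref{thm:topdecomp-polish} to ``Polishify each $\tau_i$ while preserving the chain'' is not straightforward (the closed sets adjoined for $\tau_i$ need not sit correctly in $\tau_{i+1}$) and in any case does not supply the missing density.
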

\begin{proof}
First, we check that $h$ is an embedding.  Let $U//E \subseteq (X, \tau)//E$ be an open set.  Since $\tau$ is generated by $\bigcup_i \tau_i$, we have $U = \bigcup_i U_i$, where each $U_i$ is $\tau_i$-open.  Then $[U_i]_E$ is $E$-invariant $\tau_i$-open, hence descends to an open $[U_i]_E//E \subseteq (X, \tau_i)//E$.  Let
\begin{align*}
V_i &:= \{(\den{x_j}^{\tau_j}_E)_j \in \projlim_j (X, \tau_j)//E \mid \den{x_i}^{\tau_i}_E \in [U_i]_E//E\} \\
&= \{(\den{x_j}^{\tau_j}_E)_j \in \projlim_j (X, \tau_j)//E \mid x_i \in [U_i]_E\}
\end{align*}
be the preimage of $[U_i]_E//E$ under the $i$th projection $\projlim_j (X, \tau_j)//E -> (X, \tau_i)//E$.  Since $U$ is $E$-invariant, $U = \bigcup_i U_i = \bigcup_i [U_i]_E$, whence it is easily seen that $U//E = h^{-1}(\bigcup_i V_i)$.  We have shown that every open set in $(X, \tau)//E$ is the $h$-preimage of an open set in $\projlim_i (X, \tau_i)//E$; since the former space is $T_0$, this means that $h$ is an embedding, as desired.

To check that $h$ is surjective (which is not needed in what follows), we use \cref{thm:opensurj-filtcolim}, with $X_i := (X, \tau_i)$ and $Y_i := (X, \tau_i)//E$.  The Beck--Chevalley condition in the hypotheses of that result amounts to the trivial fact that for $\tau_i$-open $U \subseteq X$, its $E$-saturation is the same whether we regard $U$ as $\tau_i$-open or $\tau_{i+1}$-open.
\end{proof}

\subsection{Weak equivalence of measure-preserving actions}

We give here a simple example of the extra information that may be contained in the quasi-Polish topology on $X//E$.

Let $(X, \mu)$ be a nonatomic standard probability space and $\Gamma$ be a countable group.  The set of measure-preserving actions $a : \Gamma \curvearrowright (X, \mu)$, where two actions are identified if they agree modulo $\mu$-null sets, is denoted
\begin{align*}
A(\Gamma, X, \mu).
\end{align*}
For an action $a : \Gamma \curvearrowright (X, \mu)$, we write $\gamma^a \cdot x := a(\gamma, x)$.  There is a canonical Polish topology on $A(\Gamma, X, \mu)$ (see \cite[II~\S10(A)]{Kgaega}), generated by the maps
$a |-> \gamma^a \cdot B$
to the measure algebra $\MALG_\mu$ of $\mu$, for $\gamma \in \Gamma$ and Borel $B \subseteq X$.  The Polish group $\Aut(X, \mu)$ of measure-preserving automorphisms of $(X, \mu)$ acts continuously on $A(\Gamma, X, \mu)$ via conjugation.  The resulting topological ergodic decomposition
\begin{align*}
\uA(\Gamma, X, \mu) := A(\Gamma, X, \mu)//{\Aut(X, \mu)}
\end{align*}
is the \defn{space of weak equivalence classes} of measure-preserving actions $\Gamma \curvearrowright (X, \mu)$; and the associated preordering $\preccurlyeq$ and equivalence relation $\approx$ on $A(\Gamma, X, \mu)$ are called \defn{weak containment} and \defn{weak equivalence}, respectively.  See \cite[II~\S10(C)]{Kgaega} or \cite[\S2.1]{BuK}.

There is a natural compact Polish topology on $\uA(\Gamma, X, \mu)$, due to Abért--Elek \cite{AE}; various equivalent descriptions of this topology are known (see \cite[\S10.1]{BuK}).  Denote this topology by $\tau$.  The weak containment partial ordering $\preccurlyeq$ is closed as a subset of $\uA(\Gamma, X, \mu)^2$ with the $\tau$-product topology (see \cite[\S10.3]{BuK}).  We also have the quasi-Polish quotient topology on $\uA(\Gamma, X, \mu)$ induced by $A(\Gamma, X, \mu)$; denote this topology by $\sigma$.  (Note that $\sigma$ is not $T_1$, since the specialization order $\preccurlyeq$ is not discrete; see \cite[\S10.3]{BuK}.)

In the theory of topological posets, there is a well-known bijective correspondence between compact Hausdorff spaces equipped with a closed partial order, and the following class of $T_0$-spaces.  A topological space $X$ is \defn{stably compact} if
\begin{itemize}
\item  it is \defn{locally compact}, i.e., every point has a basis of compact neighborhoods;
\item  it is \defn{strongly sober}, i.e., every ultrafilter has a unique greatest limit (in the specialization preorder).
\end{itemize}
(See \cite[VI-6.15]{GHK}, or \cite[VI-6.7]{GHK} for an equivalent definition.)  The \defn{patch topology} on a stably compact space $X$ has basic closed sets consisting of closed sets in $X$ together with compact sets which are upward-closed in the specialization order.  Given an arbitrary topological space $Y$ with a partial order $\le$, the \defn{upper topology} on $Y$ consists of all $\le$-upward-closed open sets.

\begin{theorem}[{\cite[VI-6.18]{GHK}}]
\label{thm:stbcpt}
For any set $X$, there is a bijection
\begin{align*}
\{\text{stably compact topologies $\sigma$ on $X$}\} &\cong \left\{(\tau, \le) \relmiddle| \begin{aligned}
&\tau \text{: compact Hausdorff topology on $X$, } \\
&{\le} \text{: $\tau$-closed partial order on $X$}
\end{aligned}\right\} \\
\sigma &|-> (\text{patch topology},\, \text{specialization order}) \\
\text{upper topology} &\mapsfrom (\tau, \le).
\end{align*}
\end{theorem}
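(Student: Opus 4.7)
The plan is to establish the bijection by constructing the two maps explicitly and then checking they are mutually inverse. I treat the two directions separately, as each requires different technical tools.

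First, starting from a stably compact topology $\sigma$ on $X$, I form the specialization order $\le$ and the patch topology $\tau$, whose subbase consists of the $\sigma$-open sets together with complements of $\sigma$-compact $\le$-upward-closed sets. I need to verify that $\tau$ is compact, that $\tau$ is Hausdorff, and that $\le$ is $\tau$-closed. For compactness, I would invoke the Alexander subbase lemma: covers by $\sigma$-open sets reduce to $\sigma$-compactness of $X$ itself (which is $\sigma$-compact saturated, since $X$ is strongly sober), while covers that mix in complements of $\sigma$-compact saturated sets are handled using the defining property of stable compactness that finite intersections of compact saturated sets remain compact saturated. For Hausdorffness, given $x \ne y$, some comparison (say $x \not\le y$) must fail, and I use local compactness plus sobriety to produce a $\sigma$-compact saturated neighborhood of $y$ missing $x$; its complement together with the interior of the neighborhood are disjoint patch-open sets separating the two points. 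Closedness of $\le$ follows from the same separation applied uniformly to pairs with $x \not\le y$.

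Second, starting from a compact Hausdorff $\tau$ with a $\tau$-closed partial order $\le$, I let $\sigma$ be the upper topology of $\le$-upper $\tau$-open sets. The three key facts are that the specialization order of $\sigma$ recovers $\le$, that the patch topology of $\sigma$ recovers $\tau$, and that $\sigma$ is stably compact. The specialization calculation reduces to the classical Nachbin separation theorem for compact pospaces: if $x \not\le y$, there exist disjoint open $\le$-upper and $\le$-lower sets separating them. For patch recovery, $\tau$-compact sets are $\tau$-closed since $\tau$ is Hausdorff, so $\sigma$-compact saturated sets coincide with $\tau$-compact $\le$-upper sets; combined with Nachbin separation this shows the patch subbase of $\sigma$ is contained in $\tau$, while conversely every $\tau$-closed set decomposes into joins of $\le$-upper and $\le$-lower $\tau$-closed sets, which are patch-closed. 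Stable compactness of $\sigma$ then follows from compactness and sobriety of $\tau$ together with local compactness of $\sigma$, the latter coming again from Nachbin: around any $x$ in a $\sigma$-open $U$, one extracts a $\tau$-compact $\le$-upper neighborhood of $x$ contained in $U$.

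Checking that the two constructions are mutually inverse is then routine: starting from $\sigma$, the upper topology of $(\tau_\sigma, \le_\sigma)$ agrees with $\sigma$ because every $\sigma$-open is patch-open and $\le_\sigma$-upper, while conversely local compactness lets one express any $\le_\sigma$-upper patch-open set as a union of $\sigma$-opens; starting from $(\tau, \le)$, the second paragraph already handles the recovery. The main obstacle I expect is the compactness of the patch topology in the forward direction: the Alexander subbase proof crucially uses the intersection property of stable compactness in a way that is easy to state but requires careful bookkeeping to combine with the Hofmann--Mislove identification of compact saturated sets with Scott-open filters. Nachbin's separation theorem plays the analogous role for the reverse direction, and without it neither the identification of the specialization order nor the patch recovery goes through.
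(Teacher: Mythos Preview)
The paper does not prove this theorem at all: it is stated with attribution to \cite[VI-6.18]{GHK} and used as a black box, with no proof or proof sketch given. So there is nothing in the paper to compare your proposal against.

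That said, your outline is essentially the standard argument one finds in the cited reference (Gierz et al.): Alexander subbase lemma plus the finite-intersection property of compact saturated sets for compactness of the patch topology, local compactness and sobriety for patch-Hausdorffness, and Nachbin's monotone separation theorem for the reverse direction and the recovery of the specialization order. Your identification of the two technical pressure points---the subbase bookkeeping in the forward direction and Nachbin separation in the reverse---is accurate. One minor point: in your compactness argument you should be explicit that $X$ itself is $\sigma$-compact (this follows from strong sobriety applied to any ultrafilter), since otherwise the subbase argument does not get off the ground; you allude to this but it deserves to be stated outright rather than parenthetically.
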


Using this, we have yet another description of the compact Polish topology $\tau$ on $\uA(\Gamma, X, \mu)$:

\begin{proposition}
The quasi-Polish quotient topology $\sigma$ on $\uA(\Gamma, X, \mu)$ induced by $A(\Gamma, X, \mu)$ corresponds, via \cref{thm:stbcpt}, to the compact Polish topology $\tau$ and the weak containment order $\preccurlyeq$.
\end{proposition}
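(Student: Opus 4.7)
The plan is to apply \cref{thm:stbcpt}: we show that $\sigma$ equals the upper topology $\sigma'$ of $(\tau, \preccurlyeq)$, which by the inverse direction of the theorem is the unique stably compact topology on $\uA(\Gamma, X, \mu)$ corresponding to $(\tau, \preccurlyeq)$. Recall that $\sigma'$ has as open sets precisely the $\preccurlyeq$-upward-closed $\tau$-open subsets of $\uA(\Gamma, X, \mu)$. Since the specialization order of $\sigma$ is already noted to be $\preccurlyeq$, every $\sigma$-open is automatically $\preccurlyeq$-upward-closed; hence the two substantive containments $\sigma' \subseteq \sigma$ and $\sigma \subseteq \sigma'$ both reduce to comparing $\sigma$-openness with $\tau$-openness on $\preccurlyeq$-upward-closed sets.

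For $\sigma' \subseteq \sigma$: given a $\preccurlyeq$-upward-closed $\tau$-open $U \subseteq \uA(\Gamma, X, \mu)$, I would show that its preimage $q^{-1}(U) \subseteq A(\Gamma, X, \mu)$ (under the quotient $q : A \to \uA$) is open in the Polish topology. The idea is to use a concrete subbase for the Abért--Elek topology from \cite[\S10.1]{BuK}---for instance, basic opens determined by finite-partition statistics---which depend continuously on $a$ via the maps $a \mapsto \gamma^a \cdot B \in \MALG_\mu$ for finitely many $\gamma \in \Gamma$ and Borel $B \subseteq X$; the $\preccurlyeq$-upward closure of such a basic $\tau$-open can then be written as a union of sets of the same form, with manifestly Polish-open preimage.

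For $\sigma \subseteq \sigma'$, which I expect to be the main obstacle: given a $\sigma$-open $U$ we must show $U$ is $\tau$-open, or equivalently, for every $\Aut(X, \mu)$-invariant Polish-closed $F \subseteq A(\Gamma, X, \mu)$ the image $q(F) \subseteq \uA$ should be $\tau$-closed---hence $\tau$-compact by compactness of $(\uA, \tau)$. The difficulty is that $q : A \to (\uA, \tau)$ is \emph{not} continuous: were it so, then $\tau \subseteq \sigma$ by the universal property of the quotient topology, and $\sigma$ would inherit the Hausdorffness of $\tau$, contradicting the failure of $T_1$ already noted. Hence the $\tau$-compactness of $q(F)$ cannot be obtained by a direct image argument; instead, one must extract it from specific structure of the AE topology, for instance by showing that $q(F)$ coincides with the intersection of all $\preccurlyeq$-downward-closed $\tau$-closed sets containing it, and using the description of weak containment in \cite[\S10.3]{BuK} identifying $\preccurlyeq$-downward closure with closure under $\Aut(X, \mu)$-translation in the Polish topology of $A$. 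Combining the two containments identifies $\sigma$ with $\sigma'$, and \cref{thm:stbcpt} then gives the desired correspondence.
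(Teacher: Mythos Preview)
Your overall plan—identify $\sigma$ with the upper topology $\sigma'$ of $(\tau,\preccurlyeq)$—is the paper's. But you have the difficulty of the two inclusions reversed, and neither sketch is complete as written.

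For $\sigma \subseteq \sigma'$, which you call the main obstacle: the paper handles this by citing \cite[10.4]{BuK} for $\sigma \subseteq \tau$ (upward-closedness being automatic from the specialization order). Your discussion of non-continuity of $q : A(\Gamma,X,\mu) \to (\uA(\Gamma,X,\mu),\tau)$ is a red herring: continuity of $q$ would give $\tau \subseteq \sigma$, the \emph{opposite} (and false) inclusion, so its failure says nothing about $\sigma \subseteq \tau$. And your proposed route—show that $q(F)$ equals the intersection of the $\preccurlyeq$-downward-closed $\tau$-closed sets containing it—is not an argument: $q(F)$ is already $\preccurlyeq$-downward closed, so that intersection is simply the smallest downward-closed $\tau$-closed superset of $q(F)$, and you give no reason it should equal $q(F)$ rather than properly contain it.

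For $\sigma' \subseteq \sigma$, where the paper does the actual work: the partition statistics $f_{\Delta,k,\vec{r}}(a) = \inf_{\vec{B}}\max_{\gamma,i,j}|\mu(\gamma^a B_i\cap B_j)-r_{\gamma,i,j}|$ are only \emph{upper} semicontinuous on $A(\Gamma,X,\mu)$, not continuous as your sketch suggests—were they continuous you would again get $\tau \subseteq \sigma$. The paper's argument is that the order-\emph{reversing} embedding $(f_{\Delta,k,\vec{r}})_{\Delta,k,\vec{r}} : (\uA(\Gamma,X,\mu),\tau) \hookrightarrow [0,1]^\Phi$ has closed image, so a short compactness argument identifies $\sigma'$ with the pullback of the \emph{lower} topology on $[0,1]^\Phi$; upper semicontinuity of each $f_{\Delta,k,\vec{r}}$ is precisely continuity into $([0,1],\text{lower})$, giving $\sigma' \subseteq \sigma$. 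Your phrase ``the $\preccurlyeq$-upward closure of a basic $\tau$-open can be written as a union of sets with manifestly Polish-open preimage'' is aiming at this same conclusion but skips the identification of $\sigma'$ with the lower-topology pullback that makes it true.
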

\begin{proof}
It suffices to check that the upper topology of $(\tau, \preccurlyeq)$ is $\sigma$.  By \cite[10.4]{BuK}, every $\sigma$-open set is $\tau$-open, as well as $\preccurlyeq$-upward closed (by definition of the specialization preorder).  For the converse, we use the following description of $\tau$ (see \cite[10.3]{BuK}).  For each $k \in \#N$, finite subset $\Delta \subseteq \Gamma$, and $\vec{r} = (r_{\gamma,i,j})_{\gamma \in \Delta; i,j < k} \in [0, 1]^{\Delta \times k \times k}$, define the upper semicontinuous map
\begin{align*}
f_{\Delta,k,\vec{r}} : A(\Gamma, X, \mu) &--> [0, 1] \\
a &|--> \inf_{\vec{B} \in \MALG_\mu^k} \max_{\gamma \in \Delta; i,j < k} \abs{\mu(\gamma(B_i) \cap B_j) - r_{\gamma,i,j}}.
\end{align*}
Let $\Phi$ denote the set of all such tuples $(\Delta,k,\vec{r})$.  Then $(f_{\Delta,k,\vec{r}})_{\Delta,k,\vec{r}} : A(\Gamma, X, \mu) -> [0, 1]^\Phi$ descends to an order-\emph{reversing} embedding $\uA(\Gamma, X, \mu) -> [0, 1]^\Phi$ onto a closed subposet of $[0, 1]^\Phi$ (see \cite[2.9]{BuK}); and $\tau$ is obtained by pulling back the usual compact Hausdorff topology on $[0, 1]^\Phi$.  It follows that the upper topology of $(\tau, \preccurlyeq)$ is obtained by pulling back the \emph{lower} topology on $[0, 1]^\Phi$ (induced by the open sets $[0, r)$ in $[0, 1]$): indeed, for each closed, $\preccurlyeq$-downward closed $F \subseteq \uA(\Gamma, X, \mu)$, the upward closure of its image in $[0, 1]^\Phi$ is a closed (by compactness), upward-closed set whose pullback to $\uA(\Gamma, X, \mu)$ is $F$ (because $(f_{\Delta,k,\vec{r}})_{\Delta,k,\vec{r}}$ is order-reversing).  Since each $f_{\Delta,k,\vec{r}} : A(\Gamma, X, \mu) -> [0, 1]$ is upper semicontinuous, i.e., continuous with respect to the lower topology on $[0, 1]$, it follows that the upper topology of $(\tau, \preccurlyeq)$ is contained in the quotient topology $\sigma$, as desired.
\end{proof}

In other words, the quasi-Polish quotient topology on $\uA(\Gamma, X, \mu)$ contains exactly the same information as the usual compact Polish topology together with the weak containment order.

\section{Topological versus measure-theoretic ergodic decompositions}
\label{sec:topmeasdecomp}

Let $E$ be a countable Borel equivalence relation on a standard Borel space $X$.  Recall (see e.g., \cite[I~\S2]{KM}) that a Borel measure $\mu$ on $X$ is \defn{$E$-invariant} if the following equivalent conditions hold:
\begin{itemize}
\item  $\mu$ is invariant with respect to some Borel action of a countable group $\Gamma \curvearrowright X$ inducing $E$;
\item  $\mu$ is invariant with respect to any Borel action of a countable group $\Gamma \curvearrowright X$ inducing $E$;
\item  for any two Borel sets $A, B \subseteq X$ such that there is a Borel bijection $f : A -> B$ with graph contained in $E$ (denoted $A \sim_E B$; see \cite[\S2]{DJK} or \cref{sec:kl}), we have $\mu(A) = \mu(B)$.
\end{itemize}
A Borel measure $\mu$ on $X$ is \defn{$E$-ergodic} if for any $E$-invariant Borel set $A \subseteq X$, we have $\mu(A) = 0$ or $\mu(X \setminus A) = 0$.  We say that $E$ is \defn{uniquely ergodic} if it admits a unique ergodic invariant probability Borel measure.  (Henceforth, by ``measure'' we mean Borel measure.)  Let $\@P(X)$ denote the standard Borel \defn{space of probability measures on $X$} (see \cite[\S17.E]{Kcdst}), $\INV_E \subseteq \@P(X)$ denote the subset of $E$-invariant measures, and $\EINV_E \subseteq \INV_E$ denote the subset of $E$-ergodic $E$-invariant measures.  It is well-known that $\INV_E, \EINV_E$ are Borel (see \cite[17.33]{Kcdst}, \cite[I~3.3]{KM}).

Recall (see e.g., \cite[\S2]{DJK}) that $E$ is \defn{compressible} if the following equivalent conditions hold:
\begin{itemize}
\item  there is a Borel injection $f : X -> X$ with graph contained in $E$ such that $X \setminus f(X)$ is an \defn{$E$-complete section} (i.e., $[X \setminus f(X)]_E = X$);
\item  $E \cong E \times I_\#N$, where $I_\#N$ is the indiscrete equivalence relation $\#N^2$ on $\#N$;
\item  there are no $E$-invariant probability measures (\defn{Nadkarni's theorem}; see \cite{Nad}, \cite[\S4.3]{BK}).
\end{itemize}
The \defn{uniform (measure-theoretic) ergodic decomposition theorem} of Farrell and Varadarajan states that there is a standard Borel decomposition of non-compressible $E$ into invariant, uniquely ergodic pieces (see e.g., \cite[I~3.3]{KM}, \cite[9.5]{DJK}):

\begin{theorem}[Farrell, Varadarajan]
\label{thm:measdecomp}
Let $E$ be a countable Borel equivalence relation on $X$.  Suppose $E$ is not compressible.  Then there is a Borel $E$-invariant surjection $p : X ->> \EINV_E$, such that
\begin{itemize}
\item[(i)]  for each $\mu \in \EINV_E$, $\mu|p^{-1}(\mu)$ is the unique $E|p^{-1}(\mu)$-ergodic invariant probability measure;
\item[(ii)]  for each $\mu \in \INV_E$, we have $\mu = \int p \,d\mu$.
\end{itemize}
Moreover, such $p$ (satisfying only (i)) is unique modulo compressible sets.
\end{theorem}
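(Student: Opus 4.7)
The plan is to reduce to the classical Choquet-theoretic ergodic decomposition of a single invariant measure, then lift from measures to points via a Borel uniformization, using Nadkarni's theorem to handle the compressible leftover.

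First, fix a Borel action of a countable group $\Gamma \curvearrowright X$ generating $E$, together with a Polish topology on $X$ making the action continuous. Nadkarni's theorem (quoted just before the statement) gives $\INV_E \ne \emptyset$ since $E$ is not compressible. Inside $\@P(X)$, the subset $\INV_E$ is a closed convex Choquet simplex whose extreme points are exactly $\EINV_E$; by the appropriate form of Choquet's theorem together with a standard measurable-selection argument, one obtains a Borel map $\mu \mapsto \hat{\mu} \in \@P(\EINV_E)$ with $\mu = \int \nu \, d\hat{\mu}(\nu)$.

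To promote this to a pointwise statement, define a Borel set
\begin{align*}
G := \{(x, \nu) \in X \times \EINV_E : x \text{ is ``generic'' for } \nu\},
\end{align*}
where genericity asks that orbit averages $|F_n|^{-1} \sum_{\gamma \in F_n} f(\gamma \cdot x)$ converge to $\nu(f)$ for each $f$ in a fixed countable dense family of bounded continuous functions on $X$ and a suitable sequence of finite $F_n \subseteq \Gamma$. The key claims are that for each $\nu \in \EINV_E$, $\nu$-a.e.\ $x$ is generic for $\nu$, and that generic points determine $\nu$ uniquely. Granting these, the projection $\pi_X \colon G \to X$ is a Borel injection whose image $X_0$ is Borel (Lusin--Suslin), and its inverse gives an $E$-invariant Borel $p \colon X_0 \to \EINV_E$: property (i) is immediate from genericity, and (ii) follows by integrating against $\hat{\mu}$. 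The leftover $B := X \setminus X_0$ is $\mu$-null for every $\mu \in \INV_E$, hence compressible by Nadkarni, so we extend $p$ to $X$ by absorbing $B$ into any compressible fiber. Uniqueness modulo compressibility is symmetric: if another $p'$ satisfies (i), then $\{p \ne p'\}$ is $\mu$-null for every $\mu \in \INV_E$, again compressible.

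The main obstacle is the pointwise convergence underlying the definition of $G$: there is no pointwise ergodic theorem available for arbitrary countable group actions. The standard workaround is to average not along $\Gamma$-orbits but along the classes of a hyperfinite Borel subequivalence relation $F \subseteq E$ (obtained measure-theoretically from Connes--Feldman--Weiss applied to each $\mu \in \INV_E$, or via a direct Borel construction), where the classical pointwise ergodic theorem for a single Borel automorphism applies. A rigorous treatment must verify that the resulting genericity condition is Borel and satisfies the two key claims above; this technical step is precisely what distinguishes the full Farrell--Varadarajan theorem from a one-line Choquet argument.
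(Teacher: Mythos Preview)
The paper does not actually prove this theorem: it is stated as the known ergodic decomposition theorem of Farrell and Varadarajan, with references to \cite[I~3.3]{KM} and \cite[9.5]{DJK}, and is then used as a black box (notably in \cref{lm:club-split}) in the proof of \cref{thm:topmeasdecomp}. So there is no argument in the paper to compare your proposal against.

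That said, your sketch correctly isolates its own gap, but the proposed workaround does not close it. Connes--Feldman--Weiss produces, for each fixed $\mu \in \INV_E$, a hyperfinite $F_\mu$ equal to $E$ on a $\mu$-conull set; it does not yield a single Borel hyperfinite $F \subseteq E$ valid for all invariant measures at once, which is exactly what a \emph{uniform} decomposition requires. And if instead you take a Borel hyperfinite subequivalence $F \subsetneq E$ (which does exist), an $E$-ergodic measure need not be $F$-ergodic, so pointwise $F$-averages converge to conditional expectations with respect to the $F$-invariant $\sigma$-algebra, not the $E$-invariant one, and hence do not recover the intended $\nu \in \EINV_E$. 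The proofs in the cited references avoid pointwise ergodic theorems for $\Gamma$ altogether: one uses that distinct elements of $\EINV_E$ are mutually singular, chooses a countable separating family of $E$-invariant Borel sets, and from it builds a Borel assignment $\nu \mapsto B_\nu$ of pairwise disjoint $E$-invariant sets with $\nu(B_\nu) = 1$; the map $p$ is then $p|B_\nu \equiv \nu$ on $\bigcup_\nu B_\nu$, and Nadkarni handles the remainder exactly as in your last paragraph.
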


In this section, we show that this measure-theoretic ergodic decomposition may be realized in a particularly nice way: namely, as an instance of the \emph{topological} ergodic decomposition of \cref{thm:topdecomp}, for a suitably chosen Polish topology on $X$.  Furthermore, we may include in the decomposition not only the $E$-invariant \emph{probability} measures, but also all $E$-invariant $\sigma$-finite measures which are ``regular'' with respect to the topology, in the following weak sense.  We say that a $\sigma$-finite measure $\mu$ on a Polish space $X$ is \defn{totally singular} if for every open $U \subseteq X$, either $\mu(U) = 0$ or $\mu(U) = \infty$.  By a ``regular'' measure, we mean one that is not totally singular.

Let $(X, E)$ be a countable Borel equivalence relation, and fix a countable group $\Gamma$ with a Borel action on $X$ inducing $E$.  We say that a Polish topology on $X$ is \defn{good} if it generates the Borel structure on $X$ and makes the $\Gamma$-action continuous (hence makes $E$ satisfy the hypotheses of \cref{thm:topdecomp}).  In the following, by ``cofinally many'', we mean that any good Polish topology may be refined to one with the specified properties.

\begin{theorem}
\label{thm:topmeasdecomp}
For cofinally many good Polish topologies on $X$, the topological ergodic decomposition $p : X ->> X//E$ has the following properties:
\begin{enumerate}
\item[(i)]  $X//E$ is Polish;
\item[(ii)]  each component $C \in X//E$ admits, modulo scaling, at most one non-totally-singular $E|C$-invariant $\sigma$-finite measure, and this measure is ergodic (if it exists);
\item[(iii)]  the sets
\begin{align*}
R &:= \{C \in X//E \mid E|C \text{ admits a non-totally-singular invariant $\sigma$-finite measure}\}, \\
P &:= \{C \in X//E \mid E|C \text{ admits an invariant probability measure}\}
\end{align*}
are clopen in $X//E$;
\item[(iv)]  there is an open set $S \subseteq X$, such that $p(S) = R$, $S$ is a complete $E|p^{-1}(R)$-section, $p^{-1}(P) \subseteq S$, and there is a Borel isomorphism
\begin{align*}
R &\overset{\cong}{-->} \EINV_{E|S} \\
C &|--> \mu_C
\end{align*}
taking each component $C \in R$ admitting a non-totally-singular invariant (ergodic) $\sigma$-finite measure to the restriction of such a measure to $S \cap C$, with the resulting measure $\mu_C$ an ergodic probability measure such that $\mu_C(S \cap C) = 1$.
\end{enumerate}
\end{theorem}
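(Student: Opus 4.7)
The plan is to start with any good Polish topology $\tau_0$ on $X$ and produce a refinement $\tau \supseteq \tau_0$ by adjoining countably many Borel sets as clopen, so that the topological ergodic decomposition in $\tau$ realizes the $\sigma$-finite measure-theoretic decomposition of $E$; cofinality follows since we only enlarge $\tau_0$.

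The key preparatory step is a $\sigma$-finite analogue of Farrell--Varadarajan (\cref{thm:measdecomp}): an $E$-invariant Borel set $X_R \subseteq X$ and a Borel $E$-invariant surjection $q : X_R \to M$ onto a standard Borel space such that each fiber $q^{-1}(m)$ carries, up to scaling, a unique ergodic $E$-invariant $\sigma$-finite measure $\mu_m$, and every such measure on $X$ arises this way; let $M_P \subseteq M$ index the probability measures and $X_P := q^{-1}(M_P)$, so \cref{thm:measdecomp} is recovered on $X_P$. One obtains this datum by applying \cref{thm:measdecomp} to $E|T$ for a Borel complete section $T$ with finite positive measure under every ergodic $\sigma$-finite invariant measure on $X$, constructed via measurable selection from a countable Borel family $\{B_n\}$ (say, from a basis of $\tau_0$). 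Simultaneously, produce a Borel set $S_0 \subseteq X$ with $S_0 \supseteq X_P$, $S_0$ a complete $E|X_R$-section, and $\mu_m(S_0 \cap q^{-1}(m)) = 1$ (after rescaling) for each $m \in M$, by partitioning $M \setminus M_P$ according to the least $n$ with $0 < \mu_m(B_n) < \infty$ and piecing together the corresponding slices of $B_n$.

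Now let $\tau$ be generated by $\tau_0$, a countable family of $E$-invariant Borel sets separating fibers of $q$ (including $X_R$, $X_P$) adjoined as clopen, and the $\Gamma$-translates $\{\gamma \cdot S_0 \mid \gamma \in \Gamma\}$ also adjoined as clopen (so $S_0$ becomes open and the $\Gamma$-action stays continuous), along with further $E$-invariant closed sets via \cref{thm:topdecomp-polish} making $X//E$ Polish; by a standard topological-realization refinement (in the spirit of Becker--Kechris), additionally ensure each fiber $q^{-1}(m)$ is a single topological ergodic component in $\tau$. Then (i) follows from \cref{thm:topdecomp-polish}; (iii) follows since $R = X_R // E$ and $P = X_P // E$ are quotients of $E$-invariant clopen sets; (ii) holds because the unique $\mu_m$ on $q^{-1}(m)$ is non-totally-singular precisely since $S_0 \cap q^{-1}(m)$ is $\tau$-open of $\mu_m$-measure $1$, and any two non-proportional ergodic $\sigma$-finite invariant measures lie on distinct fibers hence distinct components; (iv) takes $S := S_0$, with the Borel bijection $C \mapsto \mu_{q(C)}|_{S_0}$ inverted by the standard induced-measure lift of $E|S_0$-ergodic invariant probability measures on $S_0$ to $E$-invariant ergodic $\sigma$-finite measures on $[S_0]_E$. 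The main obstacle is the combined measurable construction of $(X_R, q, S_0)$, i.e., the $\sigma$-finite Farrell--Varadarajan together with the uniform selection of a finite-measure complete section; a secondary technical point is the topological-realization step ensuring each fiber of $q$ becomes a single topological component in $\tau$.
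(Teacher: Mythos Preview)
Your outline diverges from the paper's approach and contains a genuine gap at the step you yourself flag as ``the main obstacle'' and the ``secondary technical point.''

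The paper does \emph{not} construct a $\sigma$-finite ergodic decomposition $(X_R,q,S_0)$ as a Borel object in advance and then realize it topologically. Instead it works in the opposite direction: it isolates a notion of a topology \emph{splitting} a Borel set $A$ (every component admits at most one invariant probability measure on $A\cap C$), shows that the class of ``very good'' topologies (those splitting every open set) is a club, and then iterates---using further clubs to force $R,P$ and the auxiliary sets $P(U,\tau)$ to become closed, and finally adjoining these closed sets to make them clopen. The open set $S$ is assembled only at the end, from basic open sets $U_i$ and the now-clopen $P(U_i,\tau)$. No global $\sigma$-finite Farrell--Varadarajan map $q$ is ever produced.

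Your approach breaks at the ``topological-realization'' step. You want to refine $\tau_0$ to $\tau$ so that each fiber $q^{-1}(m)$ becomes a \emph{single} component of $(X,\tau)//E$. But refining a topology can only \emph{refine} the topological ergodic decomposition, never coarsen it. If some fiber $q^{-1}(m)$ already meets two distinct $\tau_0$-components (equivalently, is separated by an $E$-invariant $\tau_0$-closed set), then it will meet at least two $\tau$-components for every $\tau\supseteq\tau_0$. Unique ergodicity of $E|q^{-1}(m)$ does not prevent this: a uniquely ergodic system need not be minimal. So ``ensure each fiber is a single component'' is not achievable in general, and invoking Becker--Kechris does not help---that machinery produces finer topologies, not minimality on prescribed invariant pieces. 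This is precisely why the paper works from the topological side and lets the components determine the measures, rather than the reverse.

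There is also a circularity you do not address: the set $R$ in the theorem is defined via ``non-totally-singular,'' which depends on the \emph{final} topology $\tau$. Your $X_R$ is built from the $\tau_0$-basis $\{B_n\}$, but once you adjoin new open sets (e.g., the $\gamma\cdot S_0$), measures that were totally singular for $\tau_0$ may become non-totally-singular for $\tau$, so $R$ need not equal $X_R//E$. Relatedly, your single complete section $T$ with $0<\mu(T)<\infty$ for \emph{every} $\sigma$-finite ergodic invariant $\mu$ cannot exist when $E$ is non-smooth: for any complete section $T$, some $E$-class meets $T$ in an infinite set, and the counting measure on that class gives $\mu(T)=\infty$. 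The paper avoids all of this by never fixing a single $T$; instead it applies the ordinary (probability) ergodic decomposition to $E|U$ for each basic open $U$ and lets the club iteration stabilize the resulting data.
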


\begin{remark}
Condition (iv) says, informally, that we may identify the set $R \subseteq X//E$ with the space of non-totally-singular ergodic invariant $\sigma$-finite measures modulo scaling, so that the projection map $p$ takes a point in a component $C$ supporting such a measure to the unique such measure, as in \cref{thm:measdecomp}.  However, the space of $\sigma$-finite measures does not have a natural standard Borel structure, so we have to represent such measures via their finite restrictions to some open set $S$.

By restricting the set $S$ and the map $C |-> \mu_C$ to $P \subseteq R$, we recover the usual ergodic decomposition (\cref{thm:measdecomp}): the composite
\begin{align*}
p^{-1}(P) --->{p} P --->{C |-> \mu_C} \EINV_{E|p^{-1}(P)} \cong \EINV_E
\end{align*}
has property (i) in \cref{thm:measdecomp}, hence (by uniqueness) may be identified with the map $p$ in \cref{thm:measdecomp}.  (The last isomorphism above follows from observing that $E|(X \setminus p^{-1}(P))$ is compressible.  To extend the above composite to a map defined on all of $X$ as in \cref{thm:measdecomp}, simply absorb this compressible set into any component $C \in p^{-1}(P)$.)
\end{remark}

We devote the rest of this section to the proof of \cref{thm:topmeasdecomp}, which essentially consists of repeatedly taking the usual ergodic decomposition (\cref{thm:measdecomp}) and refining the topology to make all of the desired properties hold.  As a way of organizing this iteration, we introduce the following notion: we say that a class $\@C$ of good Polish topologies on $X$ is a \defn{club} if it is cofinal in the above sense (i.e., any (good) Polish topology may be refined to one in $\@C$), as well as closed under countable increasing joins (i.e., if $\tau_0 \subseteq \tau_1 \subseteq \tau_2 \subseteq \dotsb \in \@C$, then the (good Polish) topology generated by $\bigcup_i \tau_i$ is in $\@C$).  Similar terminology is used in \cite[5.1.4]{BK}.


\begin{lemma}
\label{lm:club-intersect}
For countably many clubs $\@C_i$ of good Polish topologies on $X$, $\bigcap_i \@C_i$ is still a club.
\end{lemma}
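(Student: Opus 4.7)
The plan is the standard diagonal-intersection argument familiar from club filters on ordinals, adapted to this setting. Closure under countable increasing joins is automatic: given $\tau_0 \subseteq \tau_1 \subseteq \dotsb$ all lying in $\bigcap_i \mathcal{C}_i$, the join $\tau_\infty$ belongs to each $\mathcal{C}_i$ individually (since each $\mathcal{C}_i$ is closed under countable increasing joins), hence to the intersection. So the work is entirely in verifying cofinality.

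For cofinality, I would fix a good Polish topology $\tau$ and build a refinement in $\bigcap_i \mathcal{C}_i$ by interleaving. Fix a bijection $\mathbb{N} \cong \mathbb{N} \times \mathbb{N}$, say $n \mapsto (i(n), k(n))$, such that each $i \in \mathbb{N}$ appears as $i(n)$ for infinitely many $n$. Starting from $\tau_0 := \tau$, recursively apply cofinality of $\mathcal{C}_{i(n)}$ to produce a good Polish topology $\tau_{n+1} \supseteq \tau_n$ with $\tau_{n+1} \in \mathcal{C}_{i(n)}$. Let $\tau_\infty$ be the join of the increasing chain $\tau_0 \subseteq \tau_1 \subseteq \dotsb$; this is again a good Polish topology (the join of countably many Polish topologies is Polish, and continuity of the $\Gamma$-action is preserved by joins).

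To see $\tau_\infty \in \mathcal{C}_i$ for each fixed $i$, extract the subsequence $(\tau_{n+1})_{n : i(n) = i}$; this is an increasing chain lying entirely in $\mathcal{C}_i$, and its join agrees with $\tau_\infty$ (since the subsequence is cofinal in $(\tau_n)_n$, because $i(n) = i$ occurs infinitely often). Therefore $\tau_\infty \in \mathcal{C}_i$ by closure of $\mathcal{C}_i$ under countable increasing joins, and consequently $\tau_\infty \in \bigcap_i \mathcal{C}_i$.

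No serious obstacle is anticipated; this is a routine diagonal/interleaving construction. The only minor point requiring care is confirming that the join of countably many good Polish topologies is itself good Polish — Polishness of the join is standard (as already invoked before \cref{thm:topdecomp-filtcolim}), and continuity of the $\Gamma$-action is preserved because every basic $\tau_n$-open set remains open in the join.
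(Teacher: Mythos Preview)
Your proof is correct and follows essentially the same approach as the paper: closure under joins is immediate, and cofinality is shown by the same interleaving construction (the paper uses a surjection $f:\mathbb{N}\to\mathbb{N}$ hitting each value infinitely often, while you use a bijection $\mathbb{N}\cong\mathbb{N}\times\mathbb{N}$, but this is only a cosmetic difference). You are slightly more explicit than the paper about why the join lies in each $\mathcal{C}_i$ (via the cofinal subsequence), which is fine.
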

\begin{proof}
Clearly $\bigcap_i \@C_i$ is closed under countable increasing joins.  To show that it is cofinal, let $f : \#N -> \#N$ be a surjection taking each value infinitely often, let $\tau_0$ be a good Polish topology on $X$, and recursively let $\tau_{i+1} \in \@C_{f(i)}$ refine $\tau_i$; then the join of the $\tau_i$ refines $\tau_0$ and is in each $\@C_i$.
\end{proof}

\begin{lemma}
\label{lm:club-polish}
The class of good Polish topologies $\tau$ on $X$ such that $(X, \tau)//E$ is Polish is a club.
\end{lemma}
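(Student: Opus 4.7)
The plan is to verify the two defining properties of a club: cofinality and closure under countable increasing joins. Both follow fairly directly from tools established earlier in \cref{sec:topdecomp-topchng}.

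For cofinality, let $\tau_0$ be an arbitrary good Polish topology on $X$. By \cref{thm:topdecomp-polish} applied to $(X, \tau_0)$, there are countably many $E$-invariant $\tau_0$-closed sets $F_i \subseteq X$ such that the topology $\tau$ generated by $\tau_0 \cup \{F_i\}_i$ has the property that $(X, \tau)//E$ is Polish (in fact, \cref{thm:topdecomp-closed} identifies it with $(X, \tau_0)//E$ augmented by the closed sets $F_i//E$). The topology $\tau$ is Polish by the standard fact that refining a Polish topology by countably many Borel sets yields a Polish topology (see e.g.\ \cite[13.1,13.3]{Kcdst}). Goodness is preserved because the $F_i$ are $E$-invariant, hence $\Gamma$-invariant: for each $\gamma \in \Gamma$, the $\gamma$-preimage of $F_i$ is $F_i$, which is $\tau$-open, so the $\Gamma$-action remains continuous.

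For closure under countable increasing joins, suppose $\tau_0 \subseteq \tau_1 \subseteq \dotsb$ are good Polish topologies on $X$ each belonging to the class, and let $\tau$ be their join. Then $\tau$ is quasi-Polish by \cite[Lemma~72]{deB} (cited just before \cref{thm:topdecomp-filtcolim}), and in fact Polish (as an increasing union of Polish topologies on $X$; again see \cite[13.3]{Kcdst}), and the $\Gamma$-action is continuous since each $\gamma$-preimage of a $\tau$-basic open set is a $\tau_i$-open set for some $i$, hence $\tau$-open. So $\tau$ is good. It remains to see that $(X, \tau)//E$ is Polish. By \cref{thm:topdecomp-filtcolim}, we have a homeomorphism
\begin{align*}
(X, \tau)//E \cong \projlim_i (X, \tau_i)//E \subseteq \prod_i (X, \tau_i)//E.
\end{align*}
The product is a countable product of Polish spaces, hence Polish. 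The inverse limit sits inside the product as the intersection over $i$ of the sets $\{(\den{x_j}^{\tau_j}_E)_j \mid \den{x_{i+1}}^{\tau_i}_E = \den{x_i}^{\tau_i}_E\}$, each of which is the preimage of the diagonal of the Polish (hence Hausdorff) space $(X, \tau_i)//E$ under a pair of continuous projections, hence closed. So the inverse limit is a closed subspace of a Polish space, hence Polish.

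The main potential obstacle is ensuring that every step in the join argument really produces a Polish (not merely quasi-Polish) space; this is where Hausdorffness of each $(X, \tau_i)//E$ — guaranteed by the hypothesis that each $\tau_i$ lies in the class — is crucial, since it is what lets us conclude the inverse limit is a \emph{closed} subspace rather than merely a $\*\Pi^0_2$ subspace of the product. Aside from this, the argument is essentially a formal assembly of the lemmas of \cref{sec:topdecomp-topchng} with standard change-of-topology facts for Polish spaces.
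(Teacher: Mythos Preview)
Your proof is correct and follows exactly the approach the paper intends: the paper's own proof is the single line ``By \cref{thm:topdecomp-polish} and \cref{thm:topdecomp-filtcolim},'' and you have correctly unpacked what each citation is doing---\cref{thm:topdecomp-polish} for cofinality and \cref{thm:topdecomp-filtcolim} for closure under joins. Your observation that Hausdorffness of each $(X,\tau_i)//E$ is what makes the inverse limit closed (rather than merely $\*\Pi^0_2$) in the product is exactly the point.
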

\begin{proof}
By \cref{thm:topdecomp-polish} and \cref{thm:topdecomp-filtcolim}.
\end{proof}

We will need the following standard fact on extending invariant measures; see \cite[3.2]{DJK}.

\begin{proposition}
\label{thm:meas-extend}
Let $A \subseteq X$ be a Borel set and $\mu$ be an $E|A$-invariant $\sigma$-finite measure.  Then there is a unique $E$-invariant $\sigma$-finite measure $[\mu]_E$ such that $[\mu]_E|A = \mu$ and $[\mu]_E(X \setminus [A]_E) = 0$.

Explicitly, $[\mu]_E$ is given as follows: enumerate $\Gamma = \{\gamma_0, \gamma_1, \dotsc\}$, and let $B_i := (\gamma_i \cdot A) \setminus \bigcup_{j < i} (\gamma_j \cdot A)$.
Then
\begin{align*}
[\mu]_E(B) &:= \sum_i \mu(\gamma_i^{-1} \cdot (B \cap B_i)).
\qed
\end{align*}
\end{proposition}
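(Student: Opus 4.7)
The plan is to verify that the explicit formula produces the desired measure, then deduce uniqueness by a short direct argument. First, the sets $B_i := (\gamma_i \cdot A) \setminus \bigcup_{j<i}(\gamma_j \cdot A)$ form a Borel partition of $[A]_E$ with $\gamma_i^{-1} \cdot B_i \subseteq A$, so each summand $\mu(\gamma_i^{-1}(B \cap B_i))$ is well-defined and $[\mu]_E$ is a Borel measure vanishing on $X \setminus [A]_E$. For $\sigma$-finiteness, partition $A = \bigsqcup_k A_k$ into Borel pieces of finite $\mu$-measure; then $\{\gamma_i \cdot A_k\}_{i,k}$ is a countable cover of $[A]_E$, and each piece has $[\mu]_E$-measure at most $\mu(A_k)$. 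To see $[\mu]_E|A = \mu$: for Borel $B \subseteq A$, partition $B = \bigsqcup_i (B \cap B_i)$; each $\gamma_i^{-1}$ restricts to a Borel bijection $B \cap B_i \to \gamma_i^{-1}(B \cap B_i)$ with both sides in $A$ and graph in $E$, so by $E|A$-invariance of $\mu$ each term equals $\mu(B \cap B_i)$, and the sum is $\mu(B)$.

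The main step is $E$-invariance of $[\mu]_E$. Any Borel bijection with graph in $E$ decomposes into countably many Borel pieces each induced by a single $\gamma \in \Gamma$, so it suffices to prove $[\mu]_E(\gamma \cdot C) = [\mu]_E(C)$ for every $\gamma \in \Gamma$ and Borel $C \subseteq X$. Since both sides ignore $X \setminus [A]_E$ (which is $\Gamma$-invariant), I may assume $C \subseteq [A]_E$ and set $C_i := C \cap B_i$, so that $\gamma \cdot C = \bigsqcup_{i,j}(\gamma \cdot C_i \cap B_j)$. For each fixed $i,j$, the map $y \mapsto \gamma_j^{-1}\gamma\gamma_i \cdot y$ is a Borel bijection from $\gamma_i^{-1}(C_i \cap \gamma^{-1} B_j) \subseteq A$ to $\gamma_j^{-1}(\gamma \cdot C_i \cap B_j) \subseteq A$ with graph in $E$, so these two subsets of $A$ have equal $\mu$-measure. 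Summing over $j$ and using $\bigsqcup_j (C_i \cap \gamma^{-1} B_j) = C_i \cap \gamma^{-1}[A]_E = C_i$, I get $\sum_j \mu(\gamma_j^{-1}(\gamma \cdot C_i \cap B_j)) = \mu(\gamma_i^{-1} C_i)$; summing over $i$ gives $[\mu]_E(\gamma \cdot C) = [\mu]_E(C)$.

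Finally, if $\nu$ is any $E$-invariant $\sigma$-finite measure with $\nu|A = \mu$ and $\nu(X \setminus [A]_E) = 0$, then for each $i$ and each Borel $B \subseteq B_i$, $E$-invariance of $\nu$ applied to $\gamma_i^{-1} : B \to \gamma_i^{-1} B \subseteq A$ yields $\nu(B) = \nu(\gamma_i^{-1} B) = \mu(\gamma_i^{-1} B) = [\mu]_E(B)$; since the $B_i$ together with $X \setminus [A]_E$ partition $X$, this forces $\nu = [\mu]_E$. The main obstacle is the $E$-invariance computation: one has to identify, for each pair $(i,j)$, the correct bijection inside $A$ (namely $y \mapsto \gamma_j^{-1}\gamma\gamma_i \cdot y$) so that $E|A$-invariance of $\mu$ becomes applicable, and then Fubini the resulting double sum back together.
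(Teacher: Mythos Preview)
Your proof is correct. Note, however, that the paper does not actually prove this proposition: it is stated as a standard fact with a reference to \cite[3.2]{DJK}, and the \qedhere\ at the end of the statement signals that no proof is given. Your argument supplies exactly the verification one would expect---checking that the explicit formula is $\sigma$-finite, restricts correctly to $A$, vanishes off $[A]_E$, and is $E$-invariant via the double-index decomposition and the key bijection $y \mapsto \gamma_j^{-1}\gamma\gamma_i \cdot y$ between subsets of $A$---followed by the obvious uniqueness computation. This is the standard proof, and there is nothing to compare against in the paper itself.
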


For a Borel set $A \subseteq X$, we say that a good Polish topology $\tau$ on $X$ \defn{splits} $A$ if for every component $C \in (X, \tau)//E$, $E|(A \cap C)$ admits at most one invariant probability measure.

\begin{lemma}
\label{lm:club-split}
For every Borel $A \subseteq X$, all sufficiently fine good Polish topologies on $X$ split $A$.
\end{lemma}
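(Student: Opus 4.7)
The plan is to obtain a countable family of $E$-invariant Borel sets in $X$ that separate the distinct ergodic $E|A$-invariant probability measures, and then refine the topology to make those sets clopen, forcing any such ergodic measure to live in a single topological component. If $E|A$ is compressible, then so is $E|(A \cap C)$ for every $E$-invariant $C \subseteq X$, so the splitting condition holds vacuously; I may therefore assume $E|A$ is not compressible and apply \cref{thm:measdecomp} to $(A, E|A)$ to obtain a Borel $E|A$-invariant map $\pi : A \to \EINV_{E|A}$ such that each $\mu \in \EINV_{E|A}$ is concentrated on $\pi^{-1}(\mu)$ and is its unique ergodic invariant probability measure there.

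Next, I would fix a countable family $(B_n)_{n \in \#N}$ of Borel subsets of $\EINV_{E|A}$ separating points, set $A_n := \pi^{-1}(B_n)$ (an $E|A$-invariant Borel subset of $A$), and $F_n := [A_n]_E \subseteq X$ (an $E$-invariant Borel set satisfying $F_n \cap A = A_n$ by the $E|A$-invariance of $A_n$). Given any good Polish topology $\tau_0$ on $X$, one refines it to a Polish topology $\tau$ in which each $F_n$ is clopen by the standard construction (\cite[13.1]{Kcdst}); goodness is preserved because each $F_n$ is $\Gamma$-invariant, so preimages under any $\gamma \in \Gamma$ of the new subbasic opens $F_n$ and $X \setminus F_n$ are themselves, hence still in $\tau$. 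Any good Polish $\tau' \supseteq \tau$ a fortiori has each $F_n$ clopen.

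To verify splitting, I would fix a $\tau'$-component $C$ and observe that, since $F_n$ and $X \setminus F_n$ are $E$-invariant and $\tau'$-closed, $C$ is entirely inside $F_n$ or entirely disjoint from $F_n$: if $x \in F_n$ then $[x]_E \subseteq F_n$ and hence $\overline{[x]_E} \subseteq F_n$, and symmetrically when $x \notin F_n$. Now if two distinct ergodic $E|A$-invariant probability measures $\rho_1 \ne \rho_2$ were both concentrated on $A \cap C$, pick $n$ with $\rho_1 \in B_n$, $\rho_2 \notin B_n$; then $\rho_1(F_n) = \rho_1(A_n) = 1$ and $\rho_2(F_n) = \rho_2(A_n) = 0$, contradicting the dichotomy for $C$. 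Hence $E|(A \cap C)$ admits at most one ergodic invariant probability measure, and therefore at most one invariant probability measure at all (since any such measure is an integral of ergodic ones by the ergodic decomposition).

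The only nontrivial technical point is that the refinement from $\tau_0$ to $\tau$ stays inside the class of good Polish topologies, and this is automatic because the separating Borel sets $F_n$ are $\Gamma$-invariant; everything else is a routine application of \cref{thm:measdecomp}.
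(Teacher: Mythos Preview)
Your proposal is correct and follows essentially the same approach as the paper: take the ergodic decomposition map $\pi : A \to \EINV_{E|A}$, pull back a countable separating family, saturate to $E$-invariant Borel sets, and make these clopen. The only cosmetic difference is in the final step: the paper argues directly that $A \cap C$ lies in a single fiber $\pi^{-1}(\pi(x))$ (by intersecting the appropriate $F_n$'s and their complements), which is uniquely ergodic, whereas you argue by contradiction that two distinct ergodic measures on $A \cap C$ would be separated by some $F_n$, and then invoke ergodic decomposition to pass from ``at most one ergodic'' to ``at most one invariant.'' Both conclusions are valid.
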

\begin{proof}
If $E|A$ is compressible, then clearly any topology splits $A$.
Otherwise, let $q : A ->> \EINV_{E|A}$ be an ergodic decomposition as in \cref{thm:measdecomp}, and let $B_i \subseteq \EINV_{E|A}$ be a countable separating family of Borel sets.  Then any good Polish topology $\tau$ making each $[q^{-1}(B_i)]_E \subseteq X$ clopen splits $A$.  Indeed, given such $\tau$, for any $C \in (X, \tau)//E$, if there is some $x \in A \cap C$, then putting
\begin{align*}
F := \bigcap_{B_i \ni q(x)} [q^{-1}(B_i)]_E \cap \bigcap_{B_i \not\ni q(x)} (X \setminus [q^{-1}(B_i)]_E),
\end{align*}
$F$ is $\tau$-closed and $E$-invariant and contains $x$, hence contains its component $\den{x}_E^\tau = C$; but clearly $q^{-1}(q(x)) = A \cap F$, whence $A \cap C \subseteq A \cap F = q^{-1}(q(x))$, whence $E|(A \cap C)$ has at most one invariant probability measure (namely, $q(x)$) by definition of $q$.
\end{proof}

We say that a good Polish topology $\tau$ on $X$ is \defn{very good} if $\tau$ splits every $\tau$-open set.

\begin{lemma}
\label{lm:split-basis}
If a good Polish topology $\tau$ on $X$ splits every set in a basis for $\tau$, then $\tau$ is very good.
\end{lemma}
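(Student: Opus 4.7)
The plan is to establish the contrapositive locally. Suppose some $\tau$-open $V$ is not split: there exist a component $C \in (X, \tau)//E$ and two distinct $E|(V \cap C)$-invariant probability measures $\mu \neq \nu$. I will produce a basic open $U$ witnessing the same failure, contradicting the hypothesis. Extend $\mu, \nu$ to $E$-invariant $\sigma$-finite measures on $X$ (supported on $V \cap C$) by setting them to zero outside $V \cap C$. Since $V \cap C \neq \emptyset$, pick any $x \in V \cap C$ and a basic open $U$ with $x \in U \subseteq V$; in particular $U \cap C \neq \emptyset$.

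The first key observation is that $[U \cap C]_E = C$. This uses the defining property of the topological ergodic decomposition (recalled in \cref{thm:topdecomp}): each component $C$ is $E$-minimal, so every $E$-orbit contained in $C$ is dense in $C$ and therefore meets the nonempty relatively open set $U \cap C$. Consequently $V \cap C \subseteq C \subseteq [U]_E$, so $\mu$ and $\nu$ both vanish outside $[U]_E$.

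The second step applies \cref{thm:meas-extend}: by the uniqueness clause, the fact that $\mu$ is $E$-invariant, vanishes outside $[U]_E$, and restricts to $\mu|U$ on $U$ forces $\mu = [\mu|U]_E$; likewise $\nu = [\nu|U]_E$. Since $\mu \neq 0$, we conclude $\mu|U \neq 0$, so $\mu(U) > 0$; similarly $\nu(U) > 0$. Thus the rescaled restrictions $\mu' := \mu|U / \mu(U)$ and $\nu' := \nu|U / \nu(U)$ are genuine $E|(U \cap C)$-invariant probability measures. If $\mu' = \nu'$ were to hold, then $\nu(U) \cdot \mu|U = \mu(U) \cdot \nu|U$ as measures on $U$; applying the (evidently linear) extension operation $[\,\cdot\,]_E$ gives $\nu(U)\, \mu = \mu(U)\, \nu$, and evaluating at $X$ (where $\mu, \nu$ both have total mass $1$) yields $\nu(U) = \mu(U)$ and hence $\mu = \nu$, contradicting our choice. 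So $\mu' \neq \nu'$, which means $\tau$ fails to split the basic open $U$, contradicting the hypothesis.

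I expect the main subtlety to be ensuring that the restrictions to a single basic $U$ are automatically nontrivial and normalize to \emph{distinct} probability measures; a naive attempt to simply pick $U$ where $\mu(U) \neq \nu(U)$ does not immediately produce two probability measures on $E|(U \cap C)$. The trick is to exploit $E$-minimality of the component $C$ together with the uniqueness clause of \cref{thm:meas-extend}: minimality guarantees $\mu(U), \nu(U) > 0$ for every basic $U$ meeting $C$, and the uniqueness clause converts distinctness of $\mu, \nu$ directly into distinctness of the normalized restrictions.
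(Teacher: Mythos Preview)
Your overall strategy---use $E$-minimality of the component $C$ to see that any basic $U \subseteq V$ meeting $C$ is a complete $E$-section for $V \cap C$, then invoke the uniqueness clause of \cref{thm:meas-extend} to conclude that $\mu|U$ and $\nu|U$ cannot be proportional---is exactly the paper's argument (with the roles of your $U,V$ swapped relative to the paper's notation).

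There is, however, a genuine slip in your extension step. You write ``extend $\mu,\nu$ to $E$-invariant $\sigma$-finite measures on $X$ \dots\ by setting them to zero outside $V \cap C$''. That extension-by-zero is \emph{not} $E$-invariant on $X$: the set $V \cap C$ is not $E$-invariant (only $C$ is), so a Borel bijection with graph in $E$ can move mass out of $V \cap C$. You then use this putative $E$-invariance to apply the uniqueness clause of \cref{thm:meas-extend} with the ambient relation $E$, and later ``evaluate at $X$ where $\mu,\nu$ both have total mass $1$''; both of these steps rest on the faulty extension.

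The fix is simply to avoid extending to $X$ altogether. Keep $\mu,\nu$ as $E|(V \cap C)$-invariant probability measures on $V \cap C$, and apply \cref{thm:meas-extend} with the ambient relation $E|(V \cap C)$ (not $E$): since $U \cap C$ is a complete section for $E|(V \cap C)$ by minimality, uniqueness gives $\mu = [\mu|(U \cap C)]_{E|(V \cap C)}$ and likewise for $\nu$. If the normalized restrictions agreed, linearity of the extension would force $\mu,\nu$ to be proportional, hence equal (both being probability measures on $V \cap C$). This is precisely the paper's argument, and your ``evaluate the total mass'' step then becomes evaluating at $V \cap C$ rather than at $X$.
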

\begin{proof}
For any component $C \in (X, \tau)//E$, since $E|C$ is minimal, every $\tau$-open $U \subseteq X$ which intersects $C$ intersects every equivalence class in $C$.  So if for some $C \in (X, \tau)//E$ and $\tau$-open $U \subseteq X$, $E|(U \cap C)$ had two distinct invariant probability measures $\mu, \nu$, then letting $V \subseteq U$ be basic open with $V \cap C \ne \emptyset$, we have that $\mu|(V \cap C), \nu|(V \cap C)$ are nonzero finite $E|(V \cap C)$-invariant measures, which must be distinct modulo scaling by \cref{thm:meas-extend} (since they extend to $\mu, \nu$ respectively), whence $E|(V \cap C)$ also has two distinct invariant probability measures.
\end{proof}

\begin{lemma}
\label{lm:club-split-open}
The class of very good Polish topologies on $X$ is a club.
\end{lemma}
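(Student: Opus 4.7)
The plan is to verify the two defining properties of a club (cofinality and closure under countable increasing joins). The strategy rests on an auxiliary fact I would establish first: for each Borel $U \subseteq X$, the class $\mathcal{C}_U$ of good Polish topologies on $X$ that split $U$ is itself a club. Cofinality of $\mathcal{C}_U$ is \cref{lm:club-split}. For the refinement and join closure properties I would prove an \emph{extension-by-zero lemma}: if $\tau \subseteq \tau'$ are good Polish topologies, then every $\tau'$-component $C'$ is contained in a unique $\tau$-component $C$, and any $E|(U \cap C')$-invariant probability measure $\mu$ extends to $\tilde\mu(B) := \mu(B \cap C')$ on $U \cap C$; this $\tilde\mu$ is $E|(U \cap C)$-invariant (because $C'$ is $E$-invariant) and still a probability measure, and the map $\mu \mapsto \tilde\mu$ is injective. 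Hence two distinct invariant probability measures on $U \cap C'$ would yield two distinct ones on $U \cap C$, so splitting is preserved under refinement. Combining this with the inverse-limit identification $(X, \bigvee_n \tau_n)//E \cong \projlim_n (X, \tau_n)//E$ of \cref{thm:topdecomp-filtcolim} --- which says every $\bigvee_n\tau_n$-component is an intersection $\bigcap_n C_n$ of $\tau_n$-components --- the same extension-by-zero argument (applied levelwise) shows that $\mathcal{C}_U$ is closed under countable increasing joins.

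For cofinality of very good topologies, given any good Polish $\tau^0$ I would recursively build a chain $\tau_0 := \tau^0 \subseteq \tau_1 \subseteq \tau_2 \subseteq \cdots$ together with countable bases $\mathcal{B}_n$ for $\tau_n$ that are closed under finite intersections and nested $\mathcal{B}_n \subseteq \mathcal{B}_{n+1}$, arranged so that $\tau_{n+1}$ splits every $U \in \mathcal{B}_n$; such $\tau_{n+1}$ exists because $\bigcap_{U \in \mathcal{B}_n} \mathcal{C}_U$ is a countable intersection of clubs, hence a club by \cref{lm:club-intersect}, and therefore cofinal above $\tau_n$. The join $\tau := \bigvee_n \tau_n$ then has $\bigcup_n \mathcal{B}_n$ as a basis (closed under finite intersections by the nesting, and generating $\tau$), and splits every element of it by refinement from the corresponding $\tau_{n+1}$, so \cref{lm:split-basis} yields that $\tau$ is very good. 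Closure of the very good class under countable increasing joins reuses the same bookkeeping: given very good $\tau_n$ with join $\tau$, each $\tau_n$ automatically splits its own basis $\mathcal{B}_n$ (by very-goodness), hence $\tau$ splits every element of $\bigcup_n \mathcal{B}_n$ by refinement, and \cref{lm:split-basis} concludes.

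The main obstacle is the refinement-and-joins behavior of splitting, which is settled by the extension-by-zero construction combined with the inverse-limit description of \cref{thm:topdecomp-filtcolim}; everything else is routine bookkeeping with compatible nested bases.
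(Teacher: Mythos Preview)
Your proposal is correct and follows essentially the same strategy as the paper: iterate \cref{lm:club-split} over successive bases to get cofinality, and invoke \cref{lm:split-basis} for closure under countable increasing joins. The paper's proof is terser and leaves implicit the monotonicity of splitting under refinement that you make explicit via your extension-by-zero observation; your auxiliary packaging of ``$\mathcal{C}_U$ is a club'' is a harmless reorganization of the same ideas.
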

\begin{proof}
Closure under countable increasing join follows from \cref{lm:split-basis}.  To check cofinality: given any good $\tau_0$, repeatedly apply \cref{lm:club-split} to obtain $\tau_1 \supseteq \tau_0$ which splits all sets in a countable basis for $\tau_0$, then similarly obtain $\tau_2 \supseteq \tau_1$, etc.; the join $\tau$ of $\tau_0 \subseteq \tau_1 \subseteq \tau_2 \subseteq \dotsb$ then splits every $\tau$-open set, by \cref{lm:split-basis}.
\end{proof}

\begin{lemma}
\label{lm:split-nontotsing}
Every very good Polish topology $\tau$ on $X$ satisfies condition (ii) in \cref{thm:topmeasdecomp}.
\end{lemma}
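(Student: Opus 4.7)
The plan is to use very goodness (the splitting of every $\tau$-open set) together with \cref{thm:meas-extend} and the minimality of each component, reducing both the uniqueness and the ergodicity of non-totally-singular $E|C$-invariant $\sigma$-finite measures to a single well-chosen $\tau$-open set $U$. The key first step is a cover lemma: for any such $\mu$ on a component $C \in (X,\tau)//E$, the $\tau$-open set $W_\mu := \bigcup\{U \in \tau : \mu(U) < \infty\}$ contains $C$. Indeed, $W_\mu$ is $E$-invariant because $\mu$ is $\Gamma$-invariant and $\Gamma$ acts by $\tau$-homeomorphisms; and $W_\mu \cap C$ is a nonempty (any $U$ witnessing non-total-singularity lies in $W_\mu$ and meets $C$, since $\mu$ is concentrated on $C$) $E|C$-invariant open subset of the minimal component $C$, hence equals $C$. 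By second countability of $\tau$, $C$ is covered by a countable family of $\tau$-opens of finite $\mu$-measure.

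For uniqueness modulo scaling of two such $\mu,\nu$, apply the cover lemma to both and intersect to obtain a countable cover $\{U_n\}$ of $C$ with $\mu(U_n), \nu(U_n) < \infty$ simultaneously. I claim some $U_n$ carries both measures positively. Otherwise, let $P := \bigcup\{U_n : \mu(U_n) > 0\}$ and $Q := \bigcup\{U_n : \nu(U_n) > 0\}$; by hypothesis $\nu$ vanishes on each $U_n \subseteq P$, so $\nu(P) = 0$, and likewise $\mu(Q) = 0$. The $E$-saturation $[P]_E$ is $\tau$-open and satisfies $\nu([P]_E) = 0$ via the explicit partition $[P]_E = \bigsqcup_i B_i$ with $B_i \subseteq \gamma_i \cdot P$ in \cref{thm:meas-extend}; but $[P]_E \cap C$ is a nonempty $E|C$-invariant open subset of $C$, so equals $C$ by minimality, forcing $\nu(C) = 0$---a contradiction. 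Having such $U_n =: U$, the splitting of $\tau$ applied to the open set $U$ gives that $E|(U \cap C)$ admits at most one invariant probability measure, so the normalized restrictions of $\mu$ and $\nu$ to $U \cap C$ coincide; \cref{thm:meas-extend} combined with $[U \cap C]_E = C$ (by minimality) then extends this to $\mu = c\nu$ on $C$.

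For ergodicity, fix any $U$ with $0 < \mu(U) < \infty$ (which exists by non-total-singularity). By splitting, the normalized $\mu|(U \cap C)$ is the unique $E|(U \cap C)$-invariant probability measure, and is therefore ergodic (uniqueness of an invariant probability measure implies its extremality). For any $E|C$-invariant Borel $A \subseteq C$, the set $A \cap (U \cap C)$ is $E|(U \cap C)$-invariant, so $\mu(A \cap U) \in \{0, \mu(U)\}$. In the first case, $\Gamma$-invariance of $\mu$ and $A$ gives $\mu(A \cap \gamma \cdot U) = \mu(A \cap U) = 0$ for every $\gamma$, whence applying the partition of $[U]_E$ from \cref{thm:meas-extend} yields $\mu(A \cap [U]_E) = 0$; since $C \subseteq [U]_E$ by minimality, $\mu(A) = 0$. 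The second case ($\mu(U \setminus A) = 0$) analogously gives $\mu(C \setminus A) = 0$.

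I expect the main obstacle to be the saturation argument in the uniqueness step that locates a common $\tau$-open set carrying positive finite mass for both $\mu$ and $\nu$; once it is in place, both uniqueness and ergodicity follow mechanically from splitting together with the extension in \cref{thm:meas-extend}.
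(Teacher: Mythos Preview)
Your proof is correct and follows the same overall architecture as the paper's: locate a single $\tau$-open set $W$ on which both $\mu$ and $\nu$ have finite positive mass, invoke splitting to force the normalized restrictions to coincide, and then push back to all of $C$ via \cref{thm:meas-extend} using that $W\cap C$ is a complete $E|C$-section. The ergodicity argument is likewise the same in spirit.

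The only substantive difference is in how you locate $W$. You build the cover $W_\mu$, intersect countable subcovers for $\mu$ and $\nu$, and then run a saturation-by-contradiction argument to extract a common $U_n$ with positive mass for both. The paper bypasses all of this in one line: pick any $U,V$ with $\mu(U\cap C),\nu(V\cap C)\in(0,\infty)$; since $U\cap C$ and $V\cap C$ are nonempty open in the minimal component $C$, both meet every $E|C$-class, so some $\gamma\in\Gamma$ has $(\gamma\cdot U)\cap V\cap C\ne\emptyset$, and then $W:=(\gamma\cdot U)\cap V$ works directly (finiteness from $W\subseteq\gamma\cdot U$ and $W\subseteq V$; positivity from $W\cap C$ being a complete section). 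Your route is fine, but the translate trick is shorter and avoids the saturation detour you flagged as the main obstacle.
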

\begin{proof}
Suppose $C \in (X, \tau)//E$ has two non-totally-singular $E|C$-invariant $\sigma$-finite measures $\mu, \nu$.  Then $\mu(U \cap C), \nu(V \cap C) \in (0, \infty)$ for some $\tau$-open $U, V \subseteq X$.  Since $E|C$ is minimal, $U, V$ intersect every $E|C$-class.  So there is a group element $\gamma \in \Gamma$ such that $(\gamma \cdot U) \cap V \cap C \ne \emptyset$.  Then $W := (\gamma \cdot U) \cap V$ still intersects every $E|C$-class, whence $\mu(W \cap C), \nu(W \cap C) \in (0, \infty)$.  Since $\tau$ splits $W$, $\mu|(W \cap C) = r \nu|(W \cap C)$ for some $r \in (0, \infty)$; since $W$ intersects every $E|C$-class, by \cref{thm:meas-extend} we have $\mu = [\mu|(W \cap C)]_{E|C} = r[\nu|(W \cap C)]_{E|C} = r\nu$.

To check that a non-totally-singular $E|C$-invariant measure $\mu$ is necessarily ergodic, let $U \subseteq X$ be $\tau$-open so that $\mu(U \cap C) \in (0, \infty)$; since $\tau$ splits $U$, $\mu|(U \cap C)$ is $E|(U \cap C)$-ergodic, whence since $U$ intersects every $E|C$-class, $\mu$ is $E|C$-ergodic.
\end{proof}

For a very good Polish topology $\tau$ on $X$ (thus \cref{thm:topmeasdecomp}(ii) holds by \cref{lm:split-nontotsing}), let
\begin{align*}
R(\tau), P(\tau) \subseteq (X, \tau)//E
\end{align*}
denote the sets $R, P$ defined in \cref{thm:topmeasdecomp}(iii).  For a $\tau$-open $U \subseteq X$, let
\begin{align*}
P(U, \tau) := \{C \in (X, \tau)//E \mid \INV_{E|(U \cap C)} \ne \emptyset\}.
\end{align*}
Clearly, $P(X, \tau) = P(\tau)$.

\begin{lemma}
\label{lm:split-open-rpu}
For any basis $\@U$ for $\tau$, $R(\tau) = \bigcup_{U \in \@U} P(U, \tau)$.
\end{lemma}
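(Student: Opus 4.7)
The plan is to prove both inclusions directly, using \cref{thm:meas-extend} to go between finite invariant measures on small open pieces and $\sigma$-finite invariant measures on an entire component.

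For the inclusion $\supseteq$, suppose $C \in P(U, \tau)$ for some $U \in \@U$, witnessed by an $E|(U \cap C)$-invariant probability measure $\mu$. Applying \cref{thm:meas-extend} to $E|C$ and the Borel set $U \cap C \subseteq C$ yields a (unique) $E|C$-invariant $\sigma$-finite measure $[\mu]_{E|C}$ extending $\mu$. Since $[\mu]_{E|C}(U \cap C) = \mu(U \cap C) = 1 \in (0, \infty)$ and $U$ is $\tau$-open, $[\mu]_{E|C}$ is not totally singular, so $C \in R(\tau)$.

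For the inclusion $\subseteq$, suppose $C \in R(\tau)$, witnessed by a non-totally-singular $E|C$-invariant $\sigma$-finite measure $\mu$. By definition of ``non-totally-singular'', there is some $\tau$-open $V \subseteq X$ with $\mu(V \cap C) \in (0, \infty)$. Cover $V$ by basic open sets from $\@U$; by countable subadditivity applied to the finite measure $\mu|(V \cap C)$, at least one $U \in \@U$ with $U \subseteq V$ satisfies $\mu(U \cap C) > 0$, and then $\mu(U \cap C) \le \mu(V \cap C) < \infty$. Normalizing the restriction $\mu|(U \cap C)$ yields an $E|(U \cap C)$-invariant probability measure, so $C \in P(U, \tau)$.

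No step is really hard here: the only thing to be careful about is making sure that the witness for $R(\tau)$ can be detected on a \emph{basic} open set (not merely some open set), which is handled by the $\sigma$-finiteness/countable subadditivity step above. The extension step uses \cref{thm:meas-extend} essentially as a black box.
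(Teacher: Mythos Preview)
Your proof is correct and follows the same route as the paper's: the $\supseteq$ direction is exactly the paper's use of \cref{thm:meas-extend}, and your $\subseteq$ direction just spells out in detail (via Lindel\"of/countable subadditivity) the step the paper states tersely as ``any $E|C$-invariant non-totally-singular measure $\mu$ restricts to an $E|(U \cap C)$-invariant finite measure for some $U \in \@U$.''
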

\begin{proof}
$\subseteq$ is because for $C \in (X, \tau)//E$, any $E|C$-invariant non-totally-singular measure $\mu$ restricts to an $E|(U \cap C)$-invariant finite measure for some $U \in \@U$; $\supseteq$ is because any $E|(U \cap C)$-invariant probability measure extends to an $E|C$-invariant non-totally-singular measure (\cref{thm:meas-extend}).
\end{proof}

\begin{lemma}
\label{lm:split-open-rp-borel}
The sets $R(\tau), P(\tau), P(U, \tau) \subseteq (X, \tau)//E$ above are Borel.
\end{lemma}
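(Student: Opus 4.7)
The plan is to realize $P(U, \tau)$ as the injective Borel image of a standard Borel space, from which the conclusions for $P(\tau)$ and $R(\tau)$ follow at once. Since $P(\tau) = P(X, \tau)$ and, by \cref{lm:split-open-rpu} applied to a countable basis $\@U$ for $\tau$, $R(\tau) = \bigcup_{U \in \@U} P(U, \tau)$, it suffices to show $P(U, \tau) \subseteq (X, \tau)//E$ is Borel for each $\tau$-open $U \subseteq X$.

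The key step is to construct a map $\Phi : \EINV_{E|U} \to (X, \tau)//E$ whose image is exactly $P(U, \tau)$. Given $\mu \in \EINV_{E|U}$ and a Borel $A \subseteq (X, \tau)//E$, the set $p^{-1}(A) \cap U$ is Borel and $E|U$-invariant (since $p$ is $E$-invariant), hence has $\mu$-measure in $\{0,1\}$ by ergodicity. The push-forward $p_*\mu$, defined by $p_*\mu(A) = \mu(p^{-1}(A) \cap U)$, is therefore a $\{0,1\}$-valued probability measure on the standard Borel (hence countably separated) space $(X, \tau)//E$, and so must be a point mass $\delta_{\Phi(\mu)}$; equivalently, $\mu$ is concentrated on $U \cap \Phi(\mu)$. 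Once $\Phi$ is shown to be Borel, injective, with image $P(U, \tau)$, the Luzin--Souslin theorem will yield the Borelness of $P(U, \tau)$.

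The remaining checks are short. \emph{Image}: if $\Phi(\mu) = C$ then $\mu$ itself is an $E|(U \cap C)$-invariant probability measure, so $C \in P(U, \tau)$; conversely, if $C \in P(U, \tau)$ then $E|(U \cap C)$ is not compressible, so by \cref{thm:measdecomp} it admits an ergodic invariant probability measure, whose extension by zero to $U$ lies in $\EINV_{E|U}$ (any $E|U$-invariant Borel set meets $U \cap C$ in an $E|(U \cap C)$-invariant set, so ergodicity is preserved) and maps to $C$ under $\Phi$. \emph{Injectivity}: since $\tau$ is very good, it splits $U$, so each component carries at most one $E|(U \cap C)$-invariant probability measure. \emph{Borelness}: a basic open set of $(X, \tau)//E$ has the form $W//E$ for some $E$-invariant $\tau$-open $W$; by minimality of $E|C$ a nonempty $E|C$-invariant open subset of $C$ must equal $C$, so $C \subseteq W$ iff $W \cap C \ne \emptyset$, which, since $\mu$ is concentrated on $U \cap \Phi(\mu)$, is equivalent to $\mu(U \cap W) > 0$. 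Hence
\begin{align*}
\Phi^{-1}(W//E) = \{\mu \in \EINV_{E|U} \mid \mu(U \cap W) > 0\}
\end{align*}
is Borel. The only substantive step is the push-forward argument producing $\Phi$; everything else follows directly from the very-good hypothesis on $\tau$ and the minimality of $E|C$ on each component.
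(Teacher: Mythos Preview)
Your proof is correct and is essentially the paper's argument, just unpacked: the paper phrases the same idea by noting that $P(U,\tau)$ is the preimage, under the Borel embedding $C \mapsto \delta_C : (X,\tau)//E \hookrightarrow \@P((X,\tau)//E)$, of the image of the Borel injective pushforward $p_* : \EINV_{E|U} \to \@P((X,\tau)//E)$; your map $\Phi$ is exactly $p_*$ followed by the inverse of the Dirac embedding, and both arguments conclude via Luzin--Souslin using that $\tau$ splits $U$ for injectivity.
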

\begin{proof}
By the above, it is enough to check that $P(U, \tau) \subseteq (X, \tau)//E$ is Borel.  Indeed, it is the preimage, under the embedding
\begin{align*}
(X, \tau)//E &`-> \@P((X, \tau)//E)
\end{align*}
taking $C \in (X, \tau)//E$ to the Dirac delta $\delta_C$, of the image of the measure pushforward map
\begin{align*}
p_* : \EINV_{E|U} --> \@P((X, \tau)//E)
\end{align*}
(where $p : U \subseteq X ->> (X, \tau)//E$ is the projection), which is injective because $\tau$ splits $U$.
\end{proof}

\begin{lemma}
\label{lm:split-open-rp-filtcolim}
Let $\tau_0 \subseteq \tau_1 \subseteq \dotsb$ be a sequence of very good Polish topologies on $X$, and let $\tau$ be their join (which is very good by \cref{lm:club-split-open}).  Let
\begin{align*}
p_i : (X, \tau_i) &->> (X, \tau_i)//E, &
p : (X, \tau) &->> (X, \tau)//E
\end{align*}
denote the quotient projections.  Then
\begin{align*}
p^{-1}(R(\tau)) &= \bigcap_i p_i^{-1}(R(\tau_i)), &
p^{-1}(P(\tau)) &= \bigcap_i p_i^{-1}(P(\tau_i)),
\end{align*}
and for $\tau_0$-open $U \subseteq X$,
\begin{align*}
p^{-1}(P(U, \tau)) &= \bigcap_i p_i^{-1}(P(U, \tau_i)).
\end{align*}
\end{lemma}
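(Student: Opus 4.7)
The plan is to establish the three identities in order of increasing complexity: first the third (about $P(U, \tau)$ for $\tau_0$-open $U$), then the second (about $P(\tau)$) as the corollary with $U = X$, and finally the first (about $R(\tau)$) by reducing to the third via \cref{lm:split-open-rpu}. The key tools throughout will be the uniqueness of invariant probabilities afforded by very-goodness, the extension theorem \cref{thm:meas-extend}, and the filtered-limit description from \cref{thm:topdecomp-filtcolim}.

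For the forward inclusion of the third identity, given an $E|(U \cap \den{x}_E^\tau)$-invariant probability $\nu$, we view $\nu$ as a measure on $U \cap \den{x}_E^{\tau_i}$ supported on the $E$-invariant Borel subset $U \cap \den{x}_E^\tau$, and verify $E|(U \cap \den{x}_E^{\tau_i})$-invariance by restricting any $E$-bijection $f : A \to B$ to $A \cap \den{x}_E^\tau \to B \cap \den{x}_E^\tau$ (using the $E$-invariance of $\den{x}_E^\tau$); notably, this argument works for any Borel $U$. For the reverse inclusion, the very-goodness of each $\tau_i$ splitting the $\tau_i$-open $U$ makes the $E|(U \cap \den{x}_E^{\tau_i})$-invariant probability $\nu_i$ unique; the forward direction applied between $\tau_i \subseteq \tau_j$ then shows $\nu_j$ provides an $E|(U \cap \den{x}_E^{\tau_i})$-invariant probability, forcing $\nu_j = \nu_i$. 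The common $\nu$ is supported on $\bigcap_i (U \cap \den{x}_E^{\tau_i}) = U \cap \den{x}_E^\tau$ by the injectivity part of \cref{thm:topdecomp-filtcolim}, yielding the required $E|(U \cap \den{x}_E^\tau)$-invariant probability; the second identity then falls out by taking $U = X$.

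For the first identity, we choose countable bases $\@U_i$ for each $\tau_i$ satisfying $\@U_i \subseteq \@U_{i+1}$, so that $\@U_\tau := \bigcup_i \@U_i$ is a basis for $\tau$; by \cref{lm:split-open-rpu}, $R(\tau) = \bigcup_{U \in \@U_\tau} P(U, \tau)$ and $R(\tau_i) = \bigcup_{U \in \@U_i} P(U, \tau_i)$. The reverse inclusion proceeds by picking for each $i$ a witness $U_i \in \@U_i$ with $\den{x}_E^{\tau_i} \in P(U_i, \tau_i)$ and using uniqueness and coherence akin to the reverse of the third identity. The main obstacle will be the forward inclusion: given $\den{x}_E^\tau \in P(U, \tau)$ with $U \in \@U_k$, showing $\den{x}_E^{\tau_i} \in R(\tau_i)$ when $i < k$ (so $U$ is $\tau_k$-open but not $\tau_i$-open). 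For this we plan to apply \cref{thm:meas-extend} to the invariant probability on $U \cap \den{x}_E^{\tau_i}$ (obtained from the general-Borel-$U$ forward direction of the third identity) to produce an $E|\den{x}_E^{\tau_i}$-invariant $\sigma$-finite measure, and then to use the $\tau_i$-minimality of $\den{x}_E^{\tau_i}$ together with $\sigma$-finiteness to exhibit a $\tau_i$-open witness of finite positive measure.
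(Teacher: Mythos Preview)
Your treatment of the third identity (for $P(U,\tau)$ with $\tau_0$-open $U$) and the reduction of the second identity to the case $U=X$ are correct and match the paper's argument essentially verbatim: the forward inclusion is immediate, and the reverse uses that each $\tau_i$ splits $U$ to force the $\nu_i$ to coincide and hence be supported on $U\cap\bigcap_i\den{x}_E^{\tau_i}=U\cap\den{x}_E^\tau$ via \cref{thm:topdecomp-filtcolim}.

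For the first identity about $R(\tau)$, however, your plan has a genuine gap. In the forward direction, your final step proposes to use ``the $\tau_i$-minimality of $\den{x}_E^{\tau_i}$ together with $\sigma$-finiteness to exhibit a $\tau_i$-open witness of finite positive measure''. These hypotheses alone do not suffice: for instance, the counting measure on $\mathbb{Q}\subseteq\mathbb{R}$ is a $\sigma$-finite, $E_v$-invariant, ergodic measure on the minimal Polish system $(\mathbb{R},E_v)$, yet every nonempty open interval has infinite measure. Thus the extension $[\nu]$ you produce via \cref{thm:meas-extend} can be totally singular with respect to $\tau_i$, and nothing you have written (minimality, $\sigma$-finiteness) rules this out. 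If the step is to go through, you must invoke some further feature of the setup---presumably the very-goodness of $\tau_i$, or the fact that $U$ is $\tau_k$-open for a \emph{very good} $\tau_k\supseteq\tau_i$---and explain concretely how it is used.

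Your reverse-direction sketch for $R(\tau)$ (``picking for each $i$ a witness $U_i\in\@U_i$ \ldots\ using uniqueness and coherence'') faces a related problem: the unique non-totally-singular measure $\mu_i$ on $\den{x}_E^{\tau_i}$ is ergodic, so $\mu_i(\den{x}_E^{\tau_{i+1}})$ is either $0$ or full; in the former case $\mu_i$ and $\mu_{i+1}$ are supported on disjoint $E$-invariant sets and there is no evident coherence to exploit. The paper itself writes only ``the other two are similar'' for the $R$ case, so it does not spell out how this is handled either; but as your explicit plan stands, it does not close the gap.
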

\begin{proof}
We check the last case; the other two are similar.
For $x \in X$, we have $x \in p^{-1}(P(U, \tau))$ iff $E|(U \cap \den{x}_E^\tau)$ admits an invariant probability measure, while $x \in \bigcap_i p_i^{-1}(P(U, \tau_i))$ iff for every $i$, $E|(U \cap \den{x}_E^{\tau_i})$ admits an invariant probability measure.
The former clearly implies the latter.  Conversely, if the latter holds, let $\mu_i$ be the measure on $E|(U \cap \den{x}_E^{\tau_i})$.  Since each $\tau_i$ splits $U$, each $\mu_i$ is the unique $E|(U \cap \den{x}_E^{\tau_i})$-invariant probability measure, whence in fact all the $\mu_i$ are identical and supported on $U \cap \bigcap_i \den{x}_E^{\tau_i}$.  By \cref{thm:topdecomp-filtcolim}, $\bigcap_i \den{x}_E^{\tau_i} = \den{x}_E^\tau$, whence $E|(U \cap \den{x}_E^\tau)$ admits an invariant probability measure.
\end{proof}

\begin{lemma}
\label{lm:club-split-open-rp-closed}
The following class of good Polish topologies $\tau$ on $X$ forms a club: $\tau$ is very good, the sets $R(\tau), P(\tau) \subseteq (X, \tau)//E$ are closed, and there is a countable basis $\@U$ for $\tau$ such that $P(U, \tau) \subseteq (X, \tau)//E$ is closed for every $U \in \@U$.
\end{lemma}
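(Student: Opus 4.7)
The plan is to verify the two defining conditions of a club: cofinality and closure under countable increasing joins. For cofinality I will iteratively adjoin the relevant Borel sets as clopen; for closure under increasing joins I will apply \cref{lm:split-open-rp-filtcolim} together with continuity of the canonical quotient maps, modulo a delicate point about compatibility of bases.

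For cofinality, starting from any good Polish topology $\tau$ on $X$, first apply \cref{lm:club-split-open} to refine to a very good $\sigma^{(0)} \supseteq \tau$, and fix a countable basis $\@U^{(0)}$. Recursively construct $\sigma^{(n+1)} \supseteq \sigma^{(n)}$ as follows: by \cref{lm:split-open-rp-borel}, the sets $R(\sigma^{(n)}), P(\sigma^{(n)})$ and each $P(U, \sigma^{(n)})$ for $U \in \@U^{(n)}$ are Borel in $(X, \sigma^{(n)})//E$; pull them back to $E$-invariant Borel subsets of $X$ and adjoin them as clopen (which preserves good-Polish-ness, since adjoining $E$-invariant opens keeps the $\Gamma$-action continuous); then re-apply \cref{lm:club-split-open} to recover very-goodness. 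Choose a countable basis $\@U^{(n+1)} \supseteq \@U^{(n)}$ of $\sigma^{(n+1)}$. Let $\bar\tau = \bigvee_n \sigma^{(n)}$ and $\@U = \bigcup_n \@U^{(n)}$. Then $\bar\tau$ is very good by \cref{lm:club-split-open}; by construction, each of $R(\sigma^{(n)}), P(\sigma^{(n)}), P(U, \sigma^{(n)})$ (for $U \in \@U^{(n)}$) becomes closed in $(X, \sigma^{(n+1)})//E$ and hence in the finer $(X, \bar\tau)//E$. Finally, \cref{lm:split-open-rp-filtcolim} yields $R(\bar\tau) = \bigcap_n R(\sigma^{(n)})$, $P(\bar\tau) = \bigcap_n P(\sigma^{(n)})$, and, for $U \in \@U^{(n)} \subseteq \@U^{(m)}$ (all $m \ge n$), $P(U, \bar\tau) = \bigcap_{m \ge n} P(U, \sigma^{(m)})$, each a countable intersection of closed sets.

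For closure under increasing joins, let $\tau_0 \subseteq \tau_1 \subseteq \dotsb$ lie in the class with witnessing bases $\@U_i$, and set $\tau = \bigvee_i \tau_i$. By \cref{lm:club-split-open}, $\tau$ is very good. Writing $q_i : (X, \tau)//E \to (X, \tau_i)//E$ for the canonical continuous quotient, \cref{lm:split-open-rp-filtcolim} gives $R(\tau) = \bigcap_i q_i^{-1}(R(\tau_i))$ and $P(\tau) = \bigcap_i q_i^{-1}(P(\tau_i))$, both countable intersections of closed sets by hypothesis. For the basis, I would take $\@U = \bigcup_i \@U_i$; it is a basis of $\tau$, and for $U \in \@U_i$, \cref{lm:split-open-rp-filtcolim} gives $P(U, \tau) = \bigcap_{j \ge i} q_j^{-1}(P(U, \tau_j))$.

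The main obstacle lies in this last expression: for $j > i$, $P(U, \tau_j)$ is only guaranteed closed in $(X, \tau_j)//E$ when $U$ lies in the specific witnessing basis $\@U_j$, not for arbitrary $\tau_j$-open $U$. To circumvent this, I plan to unwind the filtered-colimit formula using very-goodness of each $\tau_j$: for $C \in P(U, \tau_i)$, the unique invariant probability $\mu_C$ on $E|(U \cap C)$ is ergodic, so (as in the proof of \cref{lm:split-open-rp-filtcolim}) it is supported on a single $\tau_j$-subcomponent of $C$; the question of closedness then reduces to an upper-semicontinuity statement for the resulting ``support component'' map, which I expect to follow from the Borel structure on $\EINV_{E|U}$ used in the proof of \cref{lm:split-open-rp-borel}.
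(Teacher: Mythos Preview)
Your cofinality argument is essentially the paper's own: alternate between refining to a very good topology (via \cref{lm:club-split-open}) and adjoining the preimages of $R,P,P(U,\cdot)$ as clopen sets, take the join, and invoke \cref{lm:split-open-rp-filtcolim}. Your explicit choice of nested bases $\@U^{(n+1)} \supseteq \@U^{(n)}$ is exactly what makes the $P(U,\cdot)$ part go through at the limit, and the paper (implicitly) does the same by taking $\@U := \bigcup_n \@U_{2n+1}$.

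For closure under increasing joins you have correctly located a real obstacle, but your proposed workaround is not a proof. Given $U \in \@U_i$, the intersection formula from \cref{lm:split-open-rp-filtcolim} yields $p^{-1}(P(U,\tau)) = \bigcap_{j \ge i} p_j^{-1}(P(U,\tau_j))$; the $j=i$ term is $\tau$-closed by hypothesis, but for $j>i$ the hypothesis only gives closedness of $P(V,\tau_j)$ for $V$ in the \emph{particular} witnessing basis $\@U_j$, and there is no reason $U$ should lie there. Your reduction to an ``upper-semicontinuity statement for the support-component map'' is a hope rather than an argument: the assignment $x \mapsto \mu_x$ (the unique $E|(U \cap \den{x}_E^{\tau_i})$-invariant probability measure, as in \cref{lm:split-open-rp-borel}) is merely Borel, and nothing you cite forces the set $\{x : \mu_x \text{ is supported on } \den{x}_E^{\tau}\}$ to be $\tau$-closed rather than just Borel. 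You need either to prove that semicontinuity claim or to find another route; as it stands this step is a genuine gap. (For comparison, the paper's own proof of closure under joins is the single sentence ``follows from \cref{lm:club-split-open} and \cref{lm:split-open-rp-filtcolim}'' and does not address the basis-compatibility issue you raise, so you are not overlooking an argument that is spelled out there.)
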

\begin{proof}
Closure under countable increasing join follows from \cref{lm:club-split-open} and \cref{lm:split-open-rp-filtcolim}.
To check cofinality: let $\tau_0$ be any good Polish topology on $X$; given $\tau_{2n}$, let $\tau_{2n+1} \supseteq \tau_{2n}$ be a very good Polish topology by \cref{lm:club-split-open}, and let $\tau_{2n+2} \supseteq \tau_{2n+1}$ be a good Polish topology in which the preimages under the projection
\begin{align*}
p_{2n+1} : X ->> (X, \tau_{2n+1})//E
\end{align*}
of the Borel (by \cref{lm:split-open-rp-borel}) sets $R(\tau_{2n+1}), P(\tau_{2n+1}), P(U, \tau_{2n+1}) \subseteq (X, \tau_{2n+1})//E$ are clopen, for all $U$ in some countable basis $\@U_{2n+1}$ for $\tau_{2n+1}$.  Let $\tau$ be the join of the $\tau_i$ and $p : (X, \tau) ->> (X, \tau)//E$ be the projection.  Then by \cref{lm:club-split-open}, $\tau$ is very good, while by \cref{lm:split-open-rp-filtcolim}, the sets $p^{-1}(R(\tau)), p^{-1}(P(\tau)), p^{-1}(P(U, \tau)) \subseteq X$ are $\tau$-closed, i.e., $R(\tau), P(\tau), P(U, \tau) \subseteq (X, \tau)//E$ are closed, for all $U$ in the countable basis $\@U := \bigcup_n \@U_{2n+1}$ for $\tau$.
\end{proof}

\begin{proof}[Proof of \cref{thm:topmeasdecomp}]
Let $\tau_0$ be a good Polish topology on $X$, and let $\tau_1 \supseteq \tau_0$ be a finer topology satisfying \cref{lm:club-polish} and \cref{lm:club-split-open-rp-closed}, with the latter giving a basis $\@U$.  So (i--ii) hold for the decomposition $p : (X, \tau_1) ->> (X, \tau_1)//E$.  Let $\tau_2 \supseteq \tau_1$ be given by adjoining the $E$-invariant $\tau_1$-closed sets $p^{-1}(R(\tau_1)), p^{-1}(P(\tau_1)), p^{-1}(P(U, \tau_1)) \subseteq X$, for all $U \in \@U$; by \cref{thm:topdecomp-closed}, doing so does not change the components of the topological ergodic decomposition, i.e., $(X, \tau_2)//E = (X, \tau_1)//E$ as sets.  So (ii) continues to hold for $\tau_2$; clearly so does (i), and also (iii) holds since $R(\tau_2), P(\tau_2) \subseteq (X, \tau_2)//E$ (which are the same sets as $R(\tau_1), P(\tau_1)$) are now clopen.

Finally, we check (iv).  Let $\@U = \{U_0, U_1, \dotsc\}$, and put
\begin{align*}
V_i &:= (U_i \cap p^{-1}(P(U_i, \tau_1))) \setminus (p^{-1}(P) \cup \bigcup_{j < i} p^{-1}(P(U_j, \tau_1))).
\end{align*}
Since $P$ (i.e., $P(\tau_1) = P(\tau_2)$) and $P(U_i, \tau_1)$ are $\tau_2$-clopen, so is each $V_i$.  It is easily seen that
\begin{align*}
R = P \sqcup \bigsqcup_i p(V_i)
\end{align*}
(using \cref{lm:split-open-rpu}), with $p(V_i) = P(U_i, \tau_1) \setminus (P \cup \bigcup_{j < i} P(U_j, \tau_1))$.  Put
\begin{align*}
S := p^{-1}(P) \cup \bigcup_i V_i.
\end{align*}
Clearly $p(S) = R$ and $p^{-1}(P) \subseteq S$.  The map $C |-> \mu_C \in \EINV_{E|S}$ is defined in the obvious way: $\mu_C$ is the unique such measure so that $p_*(\mu_C) = \delta_C \in \@P((X, \tau_2)//E)$.  For $C \in P$, $\mu_C$ exists by definition of $P$; similarly, for $C \in p(V_i)$, $\mu_C$ exists by definition of $P(U_i, \tau_1) \supseteq p(V_i)$.
\end{proof}

\begin{remark}
If in the above proof we do not refine $\tau_1$ to $\tau_2$, then we obtain the following variant of the statement of \cref{thm:topmeasdecomp}: there is a club of topologies satisfying the conditions (and not just cofinally many); but the sets $R, P$ in (iii) are merely closed (instead of clopen), and the set $S$ in (iv) will only be $F_\sigma$ with open sections (instead of open).
\end{remark}

\section{Cardinal algebras of equidecomposition types}
\label{sec:kl}

\subsection{Cardinal algebras}
\label{sec:ca}

A \defn{cardinal algebra} is an algebraic structure $(A, 0, +, \sum)$, where $(A, 0, +)$ is an abelian monoid and $\sum : A^\#N -> A$ is a countably infinitary operation on $A$ with $\sum(a_i)_{i \in \#N}$ denoted also by $\sum_{i < \infty} a_i$, satisfying the following axioms:
\begin{enumerate}[label=(\Alph*)]

\item \label{ca:assoc}
$\sum_{i < \infty} a_i = a_0 + \sum_{i < \infty} a_{i+1}$.

\item \label{ca:dist}
$\sum_{i < \infty} (a_i + b_i) = \sum_{i < \infty} a_i + \sum_{i < \infty} b_i$.

\item \label{ca:refinement}
If $a + b = \sum_{i < \infty} c_i$, then there are $(a_i)_{i < \infty}, (b_i)_{i < \infty}$ such that $a = \sum_{i < \infty} a_i$, $b = \sum_{i < \infty} b_i$, and $a_i + b_i = c_i$.
This is depicted in the following picture:
\begin{equation*}
\begin{array}{c|cccc}
& c_0 & c_1 & c_2 & \dotsb \\
\hline
a & a_0 & a_1 & a_2 & \dotsb \\
b & b_0 & b_1 & b_2 & \dotsb
\end{array}
\end{equation*}

\item \label{ca:remainder}
If $(a_i)_{i < \infty}, (b_i)_{i < \infty}$ are such that $a_i = b_i + a_{i+1}$, then there is $c$ such that $a_i = c + \sum_{j < \infty} b_{i+j}$.

\end{enumerate}
Cardinal algebras were introduced and comprehensively studied by Tarski \cite{Tar}; the above axioms are from \cite{KMd} and are equivalent to Tarski's original axioms.  These axioms imply many other desirable algebraic properties, of which the following will be most important for our purposes:
\begin{enumerate}[resume*]

\item \label{ca:addition}
\cite[1.17, 1.38, 1.42]{Tar}  Addition is well-behaved: for finitely many elements $a_0, a_1, \dotsc, a_{n-1} \in A$, we may define their sum via the equivalent formulas
\begin{align*}
\sum_{i < n} a_i := a_0 + \dotsb + a_{n-1} = \sum (a_0, \dotsc, a_{n-1}, 0, 0, \dotsc);
\end{align*}
and both finitary and infinitary addition satisfy all commutativity and associativity laws.

\item \label{ca:order}
\cite[1.31, 1.22]{Tar}  We have a canonical partial order, defined by
\begin{align*}
a \le b \iff \exists c\, (a + c = b),
\end{align*}
which interacts well with addition: $0 \le a$ for all $a$; and if $a_i \le b_i$ for each $i$, then $\sum_i a_i \le \sum_i b_i$.

\item \label{ca:sfiltjoin}
\cite[2.24, 2.21, 3.19]{Tar}  Countable increasing joins exist: given $a_0 \le a_1 \le \dotsb \in A$, there is a join (i.e., least upper bound) $\bigvee_i a_i \in A$.  Moreover, for all $a_0, a_1, \dotsc \in A$ we have
\begin{align*}
\sum_{i < \infty} a_i = \bigvee_{n < \infty} \sum_{i < n} a_i.
\end{align*}

\item \label{ca:meet}
\cite[3.4]{Tar}  If two elements $a, b \in A$ have a meet $a \wedge b \in A$, then they also have a join $a \vee b \in A$, satisfying
\begin{align*}
a + b = a \wedge b + a \vee b.
\end{align*}

\item \label{ca:nmult}
\cite[1.43, 1.45]{Tar}  For $n \le \infty$, put
\begin{align*}
n \cdot a := \sum_{i < n} a.
\end{align*}
This yields an action of the multiplicative monoid $\-{\#N} := \#N \cup \{\infty\}$ (where $0 \infty := 0$) on $A$, which preserves the partial order and countable addition in both $\-{\#N}$ and $A$.

\item \label{ca:div}
\cite[2.34]{Tar}  For $0 < n < \infty$, we say that $a \in A$ is \defn{divisible by $n$} if there is a $b$ such that $n \cdot b = a$; such $b$ is necessarily unique, hence may be denoted by $a/n$.  We say that $a$ is \defn{completely divisible} if it is divisible by arbitrarily large $n$.

\item \label{ca:rmult}
\cite[end of \S2]{Tar} \cite[1.1--1.13]{Ch1}  For completely divisible $a \in A$, we may define \defn{real multiples} $r \cdot a$ for every $r \in \-{\#R^+} := [0, \infty]$ by
\begin{align*}
r \cdot a := \sum_i (p_i \cdot a/q_i)
\end{align*}
for any sequence of rationals $p_i/q_i$ with sum $r$ such that $a$ is divisible by each $q_i$; the definition does not depend on the choice of such sequence.  This yields an action of the multiplicative monoid $\-{\#R^+}$ (where $0\infty := 0$) on $A$, extending the action of $\-{\#N} \subseteq \-{\#R^+}$, which preserves the partial order and countable addition in both $\-{\#R^+}$ and $A$.

\end{enumerate}

\subsection{The algebra $\@K(E)$}

Fix a compressible countable Borel equivalence relation $(X, E)$.  Let $\@B(X)$ denote the Borel $\sigma$-algebra of $X$.  Recall (see e.g., \cite[\S2]{DJK}) that for $A, B \in \@B(X)$, an \defn{$E$-equidecomposition}
\begin{align*}
f : A \sim_E B
\end{align*}
is a Borel bijection $f : A -> B$ with graph contained in $E$; $A, B$ are \defn{$E$-equidecomposable}, written $A \sim_E B$, if there is some $f : A \sim_E B$.  We also write
\begin{align*}
A \preceq_E B, &&
A \prec_E B
\end{align*}
to mean respectively that $A \sim_E C$ for some Borel $C \subseteq B$, and that  such $C$ may be chosen so that $[B \setminus C]_E = [B]_E$.
(Note that it is possible to have $A \preceq_E B$ but also $A \succ_E B$, e.g., $X \prec_E X$, since $E$ is compressible.)

Put
\begin{align*}
\@K(E) := \@B(X)/{\sim_E}.
\end{align*}
The rest of this paper is devoted to the study of the algebraic structure of $\@K(E)$ (and the related $\@L(E)$ to be defined in the next section).  We will use the following notation: for $A \in \@B(X)$, write
\begin{align*}
\~A := [A]_{\sim_E}.
\end{align*}

We define finite and countably infinite sums in $\@K(E)$ as follows.  For countably many elements $\~A_0, \~A_1, \dotsc \in \@K(E)$, by compressibility of $E$, we may choose the representatives $A_0, A_1, \dotsc \in \@B(X)$ to be pairwise disjoint; put
\begin{align*}
\sum_i \~A_i := \~{\bigcup_i A_i} \qquad\text{for pairwise disjoint $A_0, A_1, \dotsc$}.
\end{align*}
It is straightforward that this is well-defined (given $f_i : A_i \sim_E B_i$ where the $B_i$ are also pairwise disjoint, we have $\bigcup_i f_i : \bigcup_i A_i \sim_E \bigcup_i B_i$).  Put also
\begin{align*}
0 := \~\emptyset, &&
\infty := \~X.
\end{align*}

\begin{proposition}
\label{thm:k-ca}
$\@K(E)$ is a cardinal algebra, with addition as above and canonical partial order given by
\begin{align*}
\~A \le \~B \iff A \preceq_E B.
\end{align*}
\end{proposition}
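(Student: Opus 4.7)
The plan is to verify the four cardinal-algebra axioms (A)--(D) and then check that the induced canonical partial order $a \le b \iff \exists c\,(a+c=b)$ coincides with the order induced by $\preceq_E$. The main preliminary is well-definedness of $\sum$: by compressibility ($E \cong E \times I_\#N$) any countable family of classes admits pairwise disjoint Borel representatives, and given $f_i : A_i \sim_E B_i$ with both families pairwise disjoint, $\bigsqcup_i f_i$ witnesses $\bigsqcup_i A_i \sim_E \bigsqcup_i B_i$; so the definition of $\sum_i \~A_i$ is independent of the choice of disjoint representatives. Axiom (A) is then immediate. Axiom (B) follows by choosing pairwise disjoint representatives for all the $A_i$ and $B_i$ simultaneously, since then $\~A_i + \~B_i = \~{A_i \sqcup B_i}$ and $\bigsqcup_i(A_i \sqcup B_i) = \bigsqcup_i A_i \sqcup \bigsqcup_i B_i$.

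For axiom (C), given $\~A + \~B = \sum_i \~C_i$ with disjoint representatives and a concrete witness $f : A \sqcup B \to \bigsqcup_i C_i$, simply set $A_i := f^{-1}(C_i) \cap A$ and $B_i := f^{-1}(C_i) \cap B$; then $f|_{A_i \sqcup B_i}$ is an equidecomposition $A_i \sqcup B_i \sim_E C_i$, and $A = \bigsqcup_i A_i$, $B = \bigsqcup_i B_i$. Axiom (D) is the substantial step. Fix pairwise disjoint representatives $A_i, B_i$ and concrete Borel witnesses $g_i : A_i \to B_i \sqcup A_{i+1}$ of $a_i = b_i + a_{i+1}$. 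Define iteratively $A_i^{(0)} := A_i$, $h_0^{(i)} := g_i$, and then $A_i^{(n+1)} := (h_n^{(i)})^{-1}(A_{i+n+1})$, $h_{n+1}^{(i)} := g_{i+n+1} \circ h_n^{(i)}|_{A_i^{(n+1)}}$; inductively $h_n^{(i)} : A_i^{(n)} \to B_{i+n} \sqcup A_{i+n+1}$ is a Borel bijection with graph in $E$, and restricts to a bijection $A_i^{(n)} \setminus A_i^{(n+1)} \to B_{i+n}$. Setting $C_i := \bigcap_n A_i^{(n)}$ yields a Borel partition of $A_i$ witnessing $\~A_i = \~{C_i} + \sum_{n \ge 0} \~B_{i+n}$.

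The key point, and the step that takes a bit of care, is that a single $c := \~{C_0}$ works uniformly in $i$. Since $h_{i-1}^{(0)}$ is a Borel bijection $A_0^{(i)} \to A_i$ (with graph in $E$) and $C_0 \subseteq A_0^{(i)}$, any $x \in C_0$ satisfies $h_n^{(0)}(x) \in A_{n+1}$ for all $n$; writing $y := h_{i-1}^{(0)}(x) \in A_i$ and noting $h_n^{(0)} = g_n \circ \dotsb \circ g_0$, we get $g_{i+m-1} \circ \dotsb \circ g_i(y) = h_{i+m-1}^{(0)}(x) \in A_{i+m}$ for every $m$, so $y \in C_i$. The reverse inclusion is symmetric, so $h_{i-1}^{(0)}$ restricts to a Borel bijection $C_0 \to C_i$ with graph in $E$, giving $\~{C_0} = \~{C_i}$.

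Finally, to match the canonical order with $\preceq_E$: if $A \sim_E A' \subseteq B$, then $B = A' \sqcup (B \setminus A')$ already provides disjoint representatives with $\~A + \~{B \setminus A'} = \~B$, so $\~A \le \~B$; conversely, if $\~A + \~C = \~B$ with $A, C$ disjoint, then the chosen equidecomposition $A \sqcup C \sim_E B$ restricts on $A$ to show $A \preceq_E B$. The main obstacle is axiom (D): the correct choice of the tail $C_i$ and especially the verification that its equidecomposition class is independent of $i$ are the nontrivial points, while (A)--(C) and the identification of the order reduce to routine Borel bookkeeping once the disjoint-representative convention is in place.
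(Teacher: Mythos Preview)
Your proof is correct. The paper, however, takes a shortcut: it observes that for any countable group action $\Gamma \curvearrowright X$ generating $E$, the relation $A \sim_E B$ is exactly the piecewise-$\Gamma$-translation equidecomposability, and then cites Tarski's general construction of the cardinal algebra of equidecomposition types \cite[16.7]{Tar} (see also \cite[2.4]{Ch1}) to conclude that $\@K(E)$ is a cardinal algebra. The paper then adds, parenthetically, that one can also verify axioms (A)--(D) directly by choosing pairwise disjoint representatives via compressibility --- which is precisely what you have done in full. So your argument is the explicit, self-contained version of the alternative route the paper only gestures at; it is longer but avoids any external citation, and in particular your careful handling of axiom (D) (building the nested sets $A_i^{(n)}$, taking the tail $C_i = \bigcap_n A_i^{(n)}$, and verifying $C_0 \sim_E C_i$ via the composed bijections) is exactly the kind of bookkeeping the paper suppresses. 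The identification of the canonical order with $\preceq_E$ is handled identically in both.
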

\begin{proof}
Recall that for any countable group $\Gamma$ with a Borel action $\Gamma \curvearrowright X$ inducing $E$, we have $A \sim_E B$ iff there are Borel partitions $A = \bigsqcup_{\gamma \in \Gamma} A_\gamma$ and $B = \bigsqcup_{\gamma \in \Gamma} B_\gamma$ such that $\gamma \cdot A_\gamma = B_\gamma$; see e.g., \cite[\S4.2--3]{BK}.  Thus $\@K(E)$ is an instance of the cardinal algebra of equidecomposition types constructed in \cite[16.7]{Tar} (see also \cite[2.4]{Ch1}).

(It is also easy to verify axioms (A--D) from \cref{sec:ca} directly, by picking the Borel sets involved in each axiom to be pairwise disjoint, using compressibility of $E$.)

That $\~A \le \~B \iff A \preceq_E B$ is immediate from the definitions.
\end{proof}

A key tool in analyzing the structure of $\@K(E)$ is the following lemma, first used by Becker--Kechris \cite[4.5.1]{BK} in their proof of the general case of Nadkarni's theorem:

\begin{lemma}[Becker--Kechris]
\label{lm:compare}
For any $A, B \in \@B(X)$, there is an $E$-invariant Borel partition $X = Y \sqcup Z$ such that $A \cap Y \preceq_E B \cap Y$ and $A \cap Z \succ_E B \cap Z$.
\end{lemma}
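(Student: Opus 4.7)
The natural strategy is to split $X$ according to the relative cardinalities of $A$ and $B$ within each $E$-class, and then use Lusin--Novikov uniformization to Borelize the obvious pointwise constructions.

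First I define $c_A, c_B : X \to \{0, 1, 2, \dots, \infty\}$ by $c_A(x) := |A \cap [x]_E|$ and $c_B(x) := |B \cap [x]_E|$. These are $E$-invariant, and Borel because each level set $\{x : c_A(x) \ge n\}$ is the projection of the Borel set of tuples $(x, a_1, \dots, a_n)$ with the $a_i \in A$ pairwise distinct and $E$-related to $x$, a set with countable $x$-sections. Put
\begin{align*}
Y := \{x \in X : c_A(x) \le c_B(x)\}, \qquad Z := X \setminus Y,
\end{align*}
giving an $E$-invariant Borel partition of $X$.

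Next, using Lusin--Novikov I fix a Borel function $e : X \times \#N \to X$ whose $x$-section enumerates $[x]_E$, from which I extract Borel ``coordinates'' listing $A \cap [x]_E$ and $B \cap [x]_E$ in order uniformly in $x$. On $Y$, mapping the $n$-th element of $A \cap [x]_E$ to the $n$-th element of $B \cap [x]_E$ is well-defined because $c_A(x) \le c_B(x)$, and produces a Borel injection $f : A \cap Y \to B \cap Y$ with graph in $E$, witnessing $A \cap Y \preceq_E B \cap Y$. On $Z$, the symmetric construction yields a Borel injection $g : B \cap Z \to A \cap Z$ with graph in $E$. Since $c_B(x) < c_A(x)$ on $Z$, every nonempty $A \cap [x]_E$ contains an element missed by $g$, so $(A \cap Z) \setminus g(B \cap Z)$ is a complete section of $[A \cap Z]_E$; this is precisely $B \cap Z \prec_E A \cap Z$, i.e.\ $A \cap Z \succ_E B \cap Z$.

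The only real subtlety is the Borel measurability of the simultaneous enumerations, which is a routine consequence of Lusin--Novikov, so I do not expect a serious obstacle. Note that compressibility of $E$ plays no role here: the lemma is purely a cardinality comparison, and compressibility enters the paper only later, when it is needed to define addition on $\@K(E)$.
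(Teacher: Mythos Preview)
Your partition by classwise cardinality does not work. On classes where $c_A$ or $c_B$ is finite the relevant restriction of $E$ is smooth and your enumeration argument goes through, but when $c_A = c_B = \infty$ the implication ``$c_A \le c_B$ everywhere $\Rightarrow A \preceq_E B$'' is simply false. Take $E$ to be the orbit relation of an irrational rotation on the circle, with $A = [0,3/4)$ and $B = [0,1/2)$: every orbit is dense, so $c_A = c_B = \infty$ everywhere and your $Y$ is all of $X$; but a Borel injection $A \to B$ with graph in $E$ would preserve Lebesgue measure, giving $3/4 \le 1/2$. Crossing with $I_\#N$ produces the same phenomenon for compressible $E$, so your final remark does not help. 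The point you flagged as a routine subtlety is in fact the whole difficulty: the Lusin--Novikov enumeration $e(x,\cdot)$ depends on the basepoint $x$ rather than on $[x]_E$, so ``$n$th to $n$th'' need not be injective, and the counterexample shows no repair along these lines is possible in the aperiodic case.

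The paper does not give a proof, citing \cite[4.5.1]{BK}. The Becker--Kechris argument is quite different: fix an enumeration $\Gamma = \{\gamma_0,\gamma_1,\dots\}$ and greedily build a Borel partial injection $f$ from $A$ to $B$ by setting $f(a) = \gamma_n a$ for the least $n$ with $a \notin \dom f$ and $\gamma_n a \in B \setminus \rng f$. By maximality of this construction, $A \setminus \dom f$ and $B \setminus \rng f$ have no $E$-edge between them. Setting $Z := [A \setminus \dom f]_E$ then gives $B \cap Z \subseteq \rng f$, so $f^{-1}$ witnesses $B \cap Z \prec_E A \cap Z$ (the unmatched part $A \setminus \dom f$ being a complete section of $Z$); and on $Y := X \setminus Z$ one has $A \cap Y \subseteq \dom f$, so $f$ itself witnesses $A \cap Y \preceq_E B \cap Y$.
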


\begin{proposition}
\label{thm:k-meetjoin}
$\@K(E)$ has finite meets, hence also countable joins.
\end{proposition}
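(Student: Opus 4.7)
The plan is to construct the meet $\~A \wedge \~B$ explicitly using the Becker--Kechris comparison lemma, then deduce countable joins purely formally from the cardinal algebra structure already established.

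First, given $\~A, \~B \in \@K(E)$, I would apply \cref{lm:compare} to fixed representatives $A, B \in \@B(X)$ to obtain an $E$-invariant Borel partition $X = Y \sqcup Z$ with $A \cap Y \preceq_E B \cap Y$ and $A \cap Z \succ_E B \cap Z$. The natural candidate for the meet is then the class of
\begin{align*}
C := (A \cap Y) \sqcup (B \cap Z),
\end{align*}
i.e., on $Y$ we take the smaller of $A, B$ (which is $A \cap Y$), and on $Z$ we take the smaller of $A, B$ (which is $B \cap Z$).

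The next step is to verify that $\~C$ is a lower bound: $A \cap Y \preceq_E B \cap Y \subseteq B$ and $B \cap Z \subseteq B$ give $C \preceq_E B$, while $A \cap Y \subseteq A$ together with $B \cap Z \preceq_E A \cap Z$ (from the second condition of \cref{lm:compare}) give $C \preceq_E A$. For the universal property, suppose $\~D \le \~A$ and $\~D \le \~B$, witnessed by equidecompositions $D \sim_E A' \subseteq A$ and $D \sim_E B' \subseteq B$. Here the key observation is that since $Y$ and $Z$ are $E$-\emph{invariant}, any equidecomposition restricts to $Y$ and to $Z$; thus $D \cap Y \preceq_E A \cap Y = C \cap Y$ and $D \cap Z \preceq_E B \cap Z = C \cap Z$, whence $D \preceq_E C$ by disjoint union of the two witnessing equidecompositions, so $\~D \le \~C$. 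This establishes that $\~C = \~A \wedge \~B$; uniqueness of meets then also shows the construction is independent of the choice of representatives $A, B$.

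For the ``hence countable joins'' clause, I would invoke the general cardinal algebra facts listed in \cref{sec:ca}: property \ref{ca:meet} converts finite meets into finite joins (of pairs, and then inductively of any finite family), while property \ref{ca:sfiltjoin} gives countable increasing joins. Combining the two, an arbitrary countable family $(\~A_i)_i$ has join $\bigvee_i \~A_i = \bigvee_n (\~A_0 \vee \dotsb \vee \~A_n)$, the countable increasing join of its finite partial joins.

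I do not expect significant obstacles here; the only slightly subtle point is the use of $E$-invariance of $Y, Z$ in pushing through the universal property, since this is precisely what lets us ``localize'' arbitrary comparisons $\~D \le \~A, \~D \le \~B$ to the two pieces of the partition.
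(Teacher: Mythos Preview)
Your proposal is correct and takes essentially the same approach as the paper: define the meet via the Becker--Kechris partition as $[(A \cap Y) \cup (B \cap Z)]_{\sim_E}$, then deduce countable joins from \S\ref{sec:ca}\ref{ca:meet} and \S\ref{sec:ca}\ref{ca:sfiltjoin}. The paper merely asserts that this set ``is easily seen'' to be the meet, whereas you spell out the lower-bound and universal-property verifications (correctly isolating the role of $E$-invariance of $Y,Z$); the only minor omission is the nullary meet (greatest element $\infty = \~X$), which the paper mentions but which is trivial.
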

\begin{proof}
Clearly the greatest element is $\infty \in \@K(E)$.  To compute the meet of $\~A, \~B \in \@K(E)$, let $Y, Z$ be given by \cref{lm:compare}; then it is easily seen that
\begin{align*}
\~A \wedge \~B = [(A \cap Y) \cup (B \cap Z)]_{\sim_E}.
\end{align*}
By \S\ref{sec:ca}\ref{ca:meet}, it follows that $\@K(E)$ has binary joins, hence (since every cardinal algebra has least element $0$ and countable increasing joins by \S\ref{sec:ca}\ref{ca:sfiltjoin}) arbitrary countable joins.

Alternatively, we may compute joins directly, as follows.  Similarly to meets, for $Y, Z$ as above,
\begin{align*}
\~A \vee \~B = [(A \cap Z) \cup (B \cap Y)]_{\sim_E}.
\end{align*}
To compute the increasing join of $\~A_0 \le \~A_1 \le \dotsb \in \@K(E)$, using compressibility of $E$, we may choose the representatives $A_0, A_1, \dotsc \in \@B(X)$ so that $A_0 \subseteq A_1 \subseteq \dotsb$; then
\begin{align*}
\bigvee_i \~A_i &= [\bigcup_i A_i]_{\sim_E} \qquad\text{for $A_0 \subseteq A_1 \subseteq \dotsb$}.
\end{align*}
To check that this works, use \S\ref{sec:ca}\ref{ca:sfiltjoin}: we have $\bigcup_i A_i = A_0 \sqcup \bigsqcup_i (A_{i+1} \setminus A_i)$, whence $[\bigcup_i A_i]_{\sim_E} = \~A_0 + \sum_i [A_{i+1} \setminus A_i]_{\sim_E} = \bigvee_n (\~A_0 + [A_1 \setminus A_0]_{\sim_E} + \dotsb + [A_n \setminus A_{n-1}]_{\sim_E}) = \bigvee_n \~A_n$.
Finally, we may compute an arbitrary countable join of $\~A_0, \~A_1, \dotsc \in \@K(E)$ as the increasing join of finite joins $\bigvee_n (\~A_0 \vee \dotsb \vee \~A_n)$.
\end{proof}

We next consider divisibility in $\@K(E)$, for which we use the following lemma \cite[7.4]{KM}:

\begin{lemma}[Kechris--Miller]
\label{lm:aperiodic-fse}
For every aperiodic countable Borel equivalence relation $(X, E)$ and $n > 0$, there is a finite Borel subequivalence relation $F \subseteq E$ all of whose classes have size $n$.
\end{lemma}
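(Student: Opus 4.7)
The plan is to build $F$ via a marker construction on the $E$-classes. By Feldman--Moore, I would first write $E = E_\Gamma$ for a Borel action of a countable group $\Gamma = \{\gamma_0 = 1, \gamma_1, \dots\}$ on $X$; this lets any Borel function $X \to X$ with graph in $E$ be presented as piecewise-$\gamma_k$ for a Borel partition of $X$, which is useful for bookkeeping. Since $E$ is aperiodic, I then invoke the Slaman--Steel marker lemma to obtain a decreasing sequence of Borel $E$-complete sections $X = A_0 \supseteq A_1 \supseteq \cdots$ with $\bigcap_k A_k = \emptyset$. Equivalently, the Borel function $m : X \to \mathbb{N}$ defined by $m(x) := \max\{k : x \in A_k\}$ is total, with each $E$-class meeting every nonempty level set in only finitely many points while still attaining arbitrarily large values of $m$ on every class.

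Using the markers, I would next assemble a Borel finite subequivalence relation $F' \subseteq E$ whose classes are finite and of unboundedly large size. The idea is, for each $x \in X$ with $m(x) = k$, to send $x$ to its ``nearest upstairs marker'' $\pi(x) \in A_{k+1} \cap [x]_E$ by searching through the enumeration $\gamma_0, \gamma_1, \dots$ of $\Gamma$ and taking the first $\gamma_j$ for which $\gamma_j \cdot x \in A_{k+1}$; then declare $F'$ to be the equivalence relation whose classes are the fibres of $\pi$. Because $A_{k+1}$ is a complete section and the levels $m^{-1}(\{k\})$ are finite on each class, the fibres of $\pi$ are finite and their sizes grow as one passes to sparser markers.

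Finally, I would refine $F'$ to obtain $F$ of exact class size $n$. On each finite $F'$-class $D \subseteq X$, the restriction of a fixed Borel linear order on $X$ gives $D$ a canonical linear order, and one carves $D$ into consecutive blocks of size $n$. To avoid a leftover tail of size less than $n$, I would arrange before the block decomposition that every $F'$-class has size divisible by $n$. This is done by passing from the original markers $A_k$ to a carefully thinned subsequence: working Borel-uniformly along each $E$-class (with ties broken by the enumeration of $\Gamma$), one removes from $A_{k+1}$ just enough markers so that the count of $m^{-1}(\{k\})$-points assigned to each new upstairs marker is a multiple of $n$.

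The main obstacle is this last bookkeeping step --- arranging divisibility of all $F'$-class sizes by $n$ in a globally Borel way. The key is that both the counting of ``mass'' between successive markers and the selection of which markers to discard can be performed within each finite marker-to-marker window, using only the Borel linear order on $X$ and the enumeration of $\Gamma$, so that the construction is uniform on $X$ and respects the $E$-class structure; once divisibility is secured, the final cut into $n$-blocks using the induced order on each finite $F'$-class is routine.
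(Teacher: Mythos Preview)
The paper does not give a proof of this lemma; it is simply quoted from \cite[7.4]{KM}. So there is nothing in the paper to compare your argument against, and I will just assess the proposal on its own.

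Your sketch has a genuine gap. The assertion that each level set $m^{-1}(\{k\}) = A_k \setminus A_{k+1}$ meets every $E$-class in only finitely many points does \emph{not} follow from the Slaman--Steel marker lemma: that lemma only produces a decreasing sequence of Borel complete sections with empty intersection, and says nothing about the size of the successive differences. For a concrete counterexample, take $E = E_0$ on $2^{\mathbb N}$ and $A_k = \{x \in 2^{\mathbb N} : x(0) = \cdots = x(k-1) = 0\}$; these are decreasing complete sections with empty intersection, yet each $A_k \setminus A_{k+1}$ meets every $E_0$-class in infinitely many points. With such markers, the fibres of your map $\pi$ are typically infinite, and the subsequent ``carve each finite $F'$-class into $n$-blocks'' step never gets off the ground.

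To repair the argument you need a marker-type input that actually controls finiteness --- for example, an increasing sequence $F_0 \subseteq F_1 \subseteq \cdots \subseteq E$ of \emph{finite} Borel subequivalence relations whose union is $E$ (so that every $E$-class is exhausted by finite $F_k$-classes of unbounded size), or equivalently a Borel assignment to each $E$-class of a locally finite graph structure. Once you have genuinely finite windows to work in, your block-cutting and divisibility-by-$n$ bookkeeping can be carried out uniformly using a Borel linear order, much as you describe; but as written, the finiteness is asserted rather than arranged, and that is where the proof currently fails.
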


\begin{proposition}
\label{thm:k-div}
$\~A \in \@K(E)$ is completely divisible iff $E|A$ is aperiodic.
\end{proposition}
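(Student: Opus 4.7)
For the easy direction ($\Leftarrow$), suppose $E|A$ is aperiodic and fix any $n > 0$. Apply \cref{lm:aperiodic-fse} to $E|A$ to obtain a finite Borel subequivalence relation $F \subseteq E|A$ all of whose classes have size exactly $n$. Since $F$ is finite Borel, fixing a Borel linear order on $X$ yields a Borel enumeration of each $F$-class, giving a Borel partition $A = A_0 \sqcup \dotsb \sqcup A_{n-1}$ where $A_i$ collects the $i$-th element of each $F$-class. The map sending the $i$-th element of each $F$-class to its $j$-th element is a Borel bijection $A_i \to A_j$ whose graph is contained in $F \subseteq E$, witnessing $A_i \sim_E A_j$. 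Hence $\~A = \sum_{i < n} \~{A_i} = n \cdot \~{A_0}$, so $\~A$ is divisible by every $n > 0$, a fortiori completely divisible.

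For ($\Rightarrow$), I prove the contrapositive: if some $E|A$-class is finite, say $C := [x]_E \cap A$ has size $k < \infty$, then $\~A$ is divisible by $n$ only when $n \mid k$, which rules out complete divisibility (take $n > k$). So suppose $n \cdot \~B = \~A$. Unwinding the definition of addition in $\@K(E)$, pick pairwise disjoint representatives $B_0, \dotsc, B_{n-1}$ of $\~B$ and an equidecomposition $f \colon A \sim_E \bigsqcup_{i < n} B_i$; then $A_i := f^{-1}(B_i)$ gives a Borel partition $A = A_0 \sqcup \dotsb \sqcup A_{n-1}$ with each $A_i \sim_E B_i \sim_E B$, so any two $A_i, A_j$ are $\sim_E$-equivalent via some $g_{ij} \colon A_i \to A_j$ with graph in $E$. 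Since the graph of $g_{ij}$ lies in $E$, for $x \in C \cap A_i$ we have $g_{ij}(x) \in [x]_E \cap A_j \subseteq [x]_E \cap A = C$, so $g_{ij}$ restricts to a bijection $C \cap A_i \to C \cap A_j$. Therefore $|C \cap A_i|$ does not depend on $i$, and
\begin{align*}
k = |C| = \sum_{i < n} |C \cap A_i| = n \cdot |C \cap A_0|,
\end{align*}
forcing $n \mid k$ as claimed.

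\textbf{Main obstacle.} Neither direction has a serious technical obstacle once \cref{lm:aperiodic-fse} is in hand: the forward direction is essentially a one-line corollary, and the reverse direction is a short counting argument exploiting the fact that any $\sim_E$-bijection preserves $E$-classes setwise. The only conceptual point to get right is that ``completely divisible'' means divisible by arbitrarily large $n$, so on the one hand the forward construction is permitted to produce a different witness $\~{A_0}$ for each $n$ (no uniform $\~B$ is required), and on the other hand the reverse direction only needs to bound the divisors of $\~A$ by \emph{some} finite number (namely the size of any finite $E|A$-class).
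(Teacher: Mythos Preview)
Your proof is correct and follows essentially the same approach as the paper: the forward direction is identical (apply \cref{lm:aperiodic-fse} and take transversals of the resulting $F$), and your reverse direction simply unpacks what the paper dismisses with ``clearly $\~A$ is not divisible by any $m > n$'' via the natural counting argument on a finite $E|A$-class. Your version even establishes the slightly sharper fact that any divisor of $\~A$ must divide the size of every finite $E|A$-class, though only the bound is needed.
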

\begin{proof}
If $E|A$ is aperiodic, then for any $n > 0$, by \cref{lm:aperiodic-fse}, we may find a Borel subequivalence relation $F \subseteq E|A$ all of whose classes have size $n$; letting $A_0, \dotsc, A_{n-1} \subseteq A$ be disjoint Borel transversals of $F$, we clearly have $\~A = \~A_0 + \dotsb + \~A_{n-1}$ and $A_0 \sim_E \dotsb \sim_E A_{n-1}$, whence $\~A$ is divisible by $n$.  Conversely, if $E|A$ has a finite class, say of cardinality $n$, then clearly $\~A$ is not divisible by any $m > n$.
\end{proof}

We say that $\~A \in \@K(E)$ is \defn{finite} if $E|A$ is finite (i.e., has finite classes), and \defn{aperiodic} if $E|A$ is aperiodic, or equivalently if $\~A$ is completely divisible by \cref{thm:k-div}.  We let $\@K^\fin(E), \@K^\ap(E) \subseteq \@K(E)$ denote the subsets of finite, respectively aperiodic, elements.

\begin{proposition}
$\@K^\ap(E) \subseteq \@K(E)$ is a cardinal subalgebra.
\end{proposition}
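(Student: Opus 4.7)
The plan is to check (i) $\@K^\ap(E)$ is closed under $0$ and countable sums, and (ii) the inherited structure satisfies the four cardinal algebra axioms (A)--(D).

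For (i), closure under $0$ is immediate, and the slickest proof of closure under $\sum$ uses the identification aperiodic $=$ completely divisible from \cref{thm:k-div}: if $\~{A_0}, \~{A_1}, \dotsc$ are aperiodic, then for each $n > 0$ we have $\~{A_i} = n \cdot (\~{A_i}/n)$, and distributivity (axiom (B)) gives $\sum_i \~{A_i} = n \cdot \sum_i (\~{A_i}/n)$, so $\sum_i \~{A_i}$ is divisible by every $n$, hence aperiodic. A direct alternative is to note that any equidecomposition $f : A \sim_E B$ restricts on each $E$-class to a cardinality-preserving bijection $[x]_E \cap A \to [x]_E \cap B$, so $\sim_E$ preserves aperiodicity; one can then pick pairwise disjoint representatives $A_i$ each with $E|A_i$ aperiodic, whereupon every nonempty $E$-class in $\bigsqcup_i A_i$ is a countable disjoint union of nonempty, hence infinite, pieces.

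For (ii), the universal equational axioms (A) and (B) transfer automatically. For refinement (C), given $a + b = \sum_i c_i$ in $\@K^\ap(E)$, I would construct aperiodic witnesses at the Borel-set level: pick disjoint aperiodic representatives $A, B$ and pairwise disjoint aperiodic $C_i$ with $A \sqcup B \sim_E \bigsqcup_i C_i$, then build an equidecomposition $f$ which, within each $E$-class $[x]_E$, distributes $[x]_E \cap A$ and $[x]_E \cap B$ across the $[x]_E \cap C_i$'s so that every nonempty image is infinite. This is achievable classwise by splitting the infinite set $[x]_E \cap C_i$ into two infinite subsets, one receiving an infinite portion of $[x]_E \cap A$ and the other an infinite portion of $[x]_E \cap B$; a Luzin--Novikov uniformization assembles these classwise choices into Borel $f$, so that $A_i := f^{-1}(C_i) \cap A$ and $B_i := f^{-1}(C_i) \cap B$ are aperiodic and witness the refinement. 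The remainder axiom (D) is handled by an analogous class-by-class construction.

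The main obstacle I expect is this Borel uniformization step: producing the ``spread'' equidecomposition $f$ jointly across all $E$-classes, simultaneously for the countably many $C_i$ and both $A$ and $B$, requires careful selection, though it should reduce to standard Luzin--Novikov-type arguments given the flexibility afforded by aperiodicity of $A$, $B$, and each $C_i$.
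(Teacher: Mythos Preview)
Your treatment of closure under $0$ and $\sum$ is fine; the divisibility argument via \cref{thm:k-div} is clean and correct.

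For axioms (C) and (D), however, the paper takes a shorter route that sidesteps the uniformization issue you flag. Rather than building aperiodic witnesses from scratch, the paper first invokes (C) in the ambient algebra $\@K(E)$ to obtain \emph{some} witnesses $\~A_i, \~B_i$, then lets $Y$ be the $E$-invariant Borel union of all classes on which some $A_i$ or $B_i$ is finite nonempty. On $X \setminus Y$ the witnesses are already aperiodic; on $Y$ the equivalence relation $E|Y$ is smooth (each class carries a finite nonempty complete section coming from one of the $A_i, B_i$), and in the smooth case one constructs aperiodic witnesses by elementary cardinal arithmetic on $\{0,\infty\}$-valued functions on $X/E$. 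Axiom (D) is handled the same way, with $Y$ determined by the single witness $c$.

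Your direct classwise construction is correct in principle, but the Borel assembly you gesture at is genuinely the nontrivial part: you must simultaneously, over all classes and all $i$, split each $[x]_E \cap C_i$ into two infinite halves and partition each of $[x]_E \cap A$, $[x]_E \cap B$ into countably many infinite pieces matched to the nonempty $C_i$'s, while also handling the degenerate cases where one of $[x]_E \cap A$, $[x]_E \cap B$ is empty. This can be done (e.g., via repeated applications of \cref{lm:aperiodic-fse} and Luzin--Novikov), but it is a fair amount of bookkeeping, and your sketch for (D) is too vague to count as a proof. The paper's ``use the ambient witnesses, then observe that the bad locus is smooth'' trick buys you exactly the avoidance of this bookkeeping: once $E$ is smooth, the classwise construction becomes trivially Borel.
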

\begin{proof}
Clearly $\@K^\ap(E) \subseteq \@K(E)$ is closed under countable addition; so it suffices to check that the existential axioms (C) and (D) from \S\ref{sec:ca} still hold in $\@K^\ap(E)$.  For (C), let $\~A, \~B, \~C_i \in \@K^\ap(E)$ with $\~A + \~B = \sum_i \~C_i$, and let $\~A_i, \~B_i \in \@K(E)$ with $\~A = \sum_i \~A_i$ and $\~B = \sum_i \~B_i$ be given by (C) in the cardinal algebra $\@K(E)$.  Let $Y \subseteq X$ be the union of all $E$-classes whose intersection with some $A_i$ or $B_i$ is finite nonempty.  Clearly $Y$ is $E$-invariant Borel, whence by restricting $E$, we may assume either $Y = \emptyset$ or $Y = X$.  If $Y = \emptyset$, we have $\~A_i, \~B_i \in \@K^\ap(E)$, so (C) holds.  If $Y = X$, then clearly $E$ is smooth, whence we may easily find $\~A_i, \~B_i$ making (C) hold.  The proof of (D) is similar.
\end{proof}

Let $\-{\#R^+} := [0, \infty]$, which is also a cardinal algebra with finite meets (and countable joins).  We say that a map $f : A -> B$ between cardinal algebras $A, B$ is a \defn{$\sum$-homomorphism} if it preserves countable sums (including zero).  Clearly, a $\sum$-homomorphism $\@K(E) -> \-{\#R^+}$, i.e., a $\sim_E$-invariant $\sigma$-additive map $\@B(X) -> \-{\#R^+}$, is the same thing as an $E$-invariant measure:

\begin{proposition}
\label{thm:k-homom-measure}
We have a canonical bijection
\begin{align*}
\INV^*_E &\cong \{\text{$\sum$-homomorphisms } \@K(E) -> \-{\#R^+}\} \\
\mu &|--> (\~A |-> \mu(A)),
\end{align*}
where $\INV^*_E$ denotes the set of (not necessarily probability or even $\sigma$-finite, and possibly zero) $E$-invariant measures on $X$.  \qed
\end{proposition}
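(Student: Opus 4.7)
The plan is a routine verification in three steps: well-definedness of the forward map, injectivity, and surjectivity. I do not expect any serious obstacle; everything follows directly from the definition of addition in $\@K(E)$ and the characterization of $E$-invariance given at the start of \cref{sec:topmeasdecomp}.

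First, I would check well-definedness. Given $\mu \in \INV^*_E$, the third equivalent condition for $E$-invariance recalled at the start of \cref{sec:topmeasdecomp} says exactly that $A \sim_E B$ implies $\mu(A) = \mu(B)$; hence $\tilde A \mapsto \mu(A)$ is a well-defined function $\@K(E) \to \-{\#R^+}$. It sends $0 = \tilde\emptyset$ to $0$. To see that it preserves countable sums, recall that $\sum_i \tilde A_i$ was defined by choosing pairwise disjoint representatives $A_i \in \@B(X)$ (possible by compressibility) and setting $\sum_i \tilde A_i = \widetilde{\bigcup_i A_i}$; then $\sigma$-additivity of $\mu$ gives $\mu(\bigcup_i A_i) = \sum_i \mu(A_i)$.

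Injectivity is immediate: if $\mu, \nu \in \INV^*_E$ induce the same $\sum$-homomorphism, then $\mu(A) = \nu(A)$ for every $A \in \@B(X)$, so $\mu = \nu$.

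For surjectivity, given a $\sum$-homomorphism $\phi \colon \@K(E) \to \-{\#R^+}$, I would define $\mu(A) := \phi(\tilde A)$ for $A \in \@B(X)$. Then $\mu(\emptyset) = \phi(0) = 0$. For $\sigma$-additivity, given pairwise disjoint Borel $A_0, A_1, \ldots$, the definition of addition in $\@K(E)$ gives $\widetilde{\bigcup_i A_i} = \sum_i \tilde A_i$, so $\mu(\bigcup_i A_i) = \phi(\sum_i \tilde A_i) = \sum_i \phi(\tilde A_i) = \sum_i \mu(A_i)$. So $\mu$ is a Borel measure, and it is $E$-invariant because $A \sim_E B$ implies $\tilde A = \tilde B$, hence $\mu(A) = \mu(B)$, which is the third equivalent formulation of $E$-invariance. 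Clearly $\mu$ induces $\phi$ back, so the two assignments are mutually inverse.
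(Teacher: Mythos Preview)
Your argument is correct and is exactly the routine verification the paper has in mind; indeed, the paper does not give a proof at all, merely remarking in the sentence before the proposition that ``clearly, a $\sum$-homomorphism $\@K(E) \to \-{\#R^+}$, i.e., a $\sim_E$-invariant $\sigma$-additive map $\@B(X) \to \-{\#R^+}$, is the same thing as an $E$-invariant measure'' and marking the statement with \qedsymbol. Your three-step check spells out precisely this.
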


We henceforth identify a measure $\mu \in \INV^*_E$ with the corresponding $\sum$-homomorphism.

\begin{lemma}
\label{lm:ca-homom-sfiltjoin-rmult}
Every $\sum$-homomorphism $f : A -> B$ between cardinal algebras preserves countable increasing joins as well as real multiples of completely divisible elements.
\end{lemma}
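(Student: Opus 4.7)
The plan is to prove the two claims in sequence, with the increasing-join case done first since it is what ultimately justifies passing the infinite sum defining a real multiple through $f$.

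For countable increasing joins: given $a_0 \le a_1 \le \dotsb \in A$, use the definition of the canonical partial order (\S\ref{sec:ca}\ref{ca:order}) to pick $b_i \in A$ with $a_{i+1} = a_i + b_i$. Then $a_n = a_0 + \sum_{i<n} b_i$, and \S\ref{sec:ca}\ref{ca:sfiltjoin} gives the identification $\bigvee_n a_n = a_0 + \sum_i b_i$. Apply $f$: since $f$ preserves countable sums (and in particular finite sums, by padding with zeros as in \S\ref{sec:ca}\ref{ca:addition}), we have $f(a_0 + \sum_i b_i) = f(a_0) + \sum_i f(b_i)$ and $f(a_n) = f(a_0) + \sum_{i<n} f(b_i)$. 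A second appeal to \S\ref{sec:ca}\ref{ca:sfiltjoin}, this time in $B$, shows $\bigvee_n f(a_n) = f(a_0) + \sum_i f(b_i) = f(\bigvee_n a_n)$, as desired.

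For real multiples: first note that if $a \in A$ is divisible by $n$, say $a = n \cdot b$, then $f(a) = n \cdot f(b)$ since $f$ preserves finite sums, so $f(a)$ is also divisible by $n$; by the uniqueness clause of \S\ref{sec:ca}\ref{ca:div}, $f(a/n) = f(a)/n$. In particular, if $a$ is completely divisible then so is $f(a)$, and the same sequence of denominators may be used to compute real multiples of $f(a)$ as of $a$. Now, given completely divisible $a$ and $r \in \-{\#R^+}$, pick a sequence $(p_i/q_i)$ of nonnegative rationals summing to $r$ with $a$ divisible by each $q_i$, and compute
\begin{align*}
f(r \cdot a) = f\Bigl(\sum_i p_i \cdot (a/q_i)\Bigr) = \sum_i p_i \cdot f(a/q_i) = \sum_i p_i \cdot (f(a)/q_i) = r \cdot f(a),
\end{align*}
using the formula of \S\ref{sec:ca}\ref{ca:rmult} at both ends and the previous observations in the middle.

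There is no real obstacle here; the only point requiring mild care is making sure the two appeals to \S\ref{sec:ca}\ref{ca:sfiltjoin} are correctly set up so that the increasing-join statement reduces to preservation of a single countable sum, and observing that $\sum$-homomorphisms automatically preserve finite sums (hence divisibility). Everything else is formal manipulation of the defining formulas from \S\ref{sec:ca}.
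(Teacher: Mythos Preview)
Your proof is correct and follows essentially the same approach as the paper: telescope the increasing sequence into $a_0 + \sum_i b_i$ via \S\ref{sec:ca}\ref{ca:sfiltjoin}, push through $f$, and reassemble in $B$; then for real multiples, deduce $f(a/n) = f(a)/n$ from preservation of finite sums and uniqueness of divisors, and apply the defining formula of \S\ref{sec:ca}\ref{ca:rmult}. Your write-up is slightly more explicit in noting that $f(a)$ is again completely divisible, but otherwise the arguments coincide.
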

\begin{proof}
Let $a_0 \le a_1 \le \dotsb \in A$.  Then $a_{i+1} = a_i + b_i$ for some $b_i \in A$.  Using \S\ref{sec:ca}\ref{ca:sfiltjoin}, we have $\bigvee_i a_i = \bigvee_i (a_0 + b_0 + \dotsb + b_{i-1}) = a_0 + \sum_i b_i$, whence $f(\bigvee_i a_i) = f(a_0) + \sum_i f(b_i) = \bigvee_i (f(a_0) + f(b_0) + \dotsb + f(b_{i-1})) = \bigvee_i f(a_0 + b_0 + \dotsb + b_{i-1}) = \bigvee_i f(a_i)$.

Let $a \in A$ be completely divisible and $r \in \-{\#R^+}$.  Then for every positive integer $n$, we have $n \cdot f(a/n) = f(n \cdot a/n) = f(a)$, whence $f(a/n) = f(a)/n$.  So for any sequence of rationals $p_i/q_i$ with sum $r$, we have $f(r \cdot a) = f(\sum_i p_i \cdot a/q_i) = \sum_i p_i \cdot f(a)/q_i = r \cdot f(a)$.
\end{proof}

\begin{lemma}
\label{lm:ca-homom-meetjoin}
If a $\sum$-homomorphism $\mu : A -> \-{\#R^+}$ preserves binary meets, then it preserves countable joins.
\end{lemma}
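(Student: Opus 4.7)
The plan is to reduce preservation of arbitrary countable joins to preservation of binary joins, then combine with Lemma~\ref{lm:ca-homom-sfiltjoin-rmult} which already guarantees preservation of countable increasing joins. The crux is the binary case, and it rests on the identity from \S\ref{sec:ca}\ref{ca:meet}, namely that whenever $a \wedge b$ exists, so does $a \vee b$, and
\begin{align*}
a + b = (a \wedge b) + (a \vee b).
\end{align*}

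First I would verify preservation of binary joins. Given $a, b \in A$, applying $\mu$ to the above identity and using the hypothesis that $\mu$ preserves meets yields
\begin{align*}
\mu(a) + \mu(b) = \min(\mu(a), \mu(b)) + \mu(a \vee b).
\end{align*}
On the other hand, in $\-{\#R^+}$ we have the trivial identity $x + y = \min(x, y) + \max(x, y)$. Comparing the two equalities and cancelling $\min(\mu(a), \mu(b))$ from both sides — which is legitimate when this quantity is finite — gives $\mu(a \vee b) = \max(\mu(a), \mu(b)) = \mu(a) \vee \mu(b)$. The only subtle point is when $\min(\mu(a), \mu(b)) = \infty$; then $\mu(a) = \mu(b) = \infty$, and since $\mu$ is order-preserving (as any $\sum$-homomorphism is, by the definition of $\le$ in a cardinal algebra), $\mu(a \vee b) \ge \mu(a) = \infty$, so again equals $\max(\mu(a), \mu(b))$. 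By induction, $\mu$ preserves all finite joins.

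For the countable case, I would observe that any countable join in $A$ can be rewritten as an increasing join of partial joins:
\begin{align*}
\bigvee_i a_i = \bigvee_n (a_0 \vee \dotsb \vee a_n).
\end{align*}
Applying Lemma~\ref{lm:ca-homom-sfiltjoin-rmult} to pull $\mu$ through the outer increasing join, and then the binary case $n$ times to each finite join, gives
\begin{align*}
\mu\Bigl(\bigvee_i a_i\Bigr) = \bigvee_n \mu(a_0 \vee \dotsb \vee a_n) = \bigvee_n \max(\mu(a_0), \dotsc, \mu(a_n)) = \bigvee_i \mu(a_i),
\end{align*}
as required. There is no serious obstacle here: the mild bookkeeping concerns preserving addition through the extended real line, and checking that $\mu$ is monotone, which follows at once from $\mu$ being a $\sum$-homomorphism.
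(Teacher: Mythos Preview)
Your argument is correct and follows essentially the same route as the paper: use the modular identity $a+b=(a\wedge b)+(a\vee b)$ together with the meet-preservation hypothesis to get preservation of binary joins (splitting into the finite and infinite cases exactly as you do), and then invoke \cref{lm:ca-homom-sfiltjoin-rmult} to pass from finite joins to countable ones via increasing joins. The paper's write-up is a bit terser but the logic is identical.
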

\begin{proof}
For $a, b \in A$, from $a + b = a \wedge b + a \vee b$ (\S\ref{sec:ca}\ref{ca:meet}), we get $\mu(a \wedge b) + \mu(a \vee b) = \mu(a) + \mu(b) = \mu(a) \wedge \mu(b) + \mu(a) \vee \mu(b) = \mu(a \wedge b) + \mu(a) \vee \mu(b)$.  If $\mu(a \wedge b) < \infty$, then we may cancel to get $\mu(a \vee b) = \mu(a) \vee \mu(b)$.  Otherwise, since $a \wedge b \le a, b, a \vee b$, we have $\mu(a \vee b) = \infty = \mu(a) \vee \mu(b)$.  So $\mu$ preserves binary joins.  Since $\mu$ always preserves $0$ and countable increasing joins (\cref{lm:ca-homom-sfiltjoin-rmult}), it preserves arbitrary countable joins.
\end{proof}

\begin{lemma}
\label{lm:k-meas-ergodic}
$\mu \in \INV^*_E$ is ergodic iff $\mu : \@K(E) -> \-{\#R^+}$ preserves finite meets.
\end{lemma}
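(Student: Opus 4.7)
The plan is to leverage the explicit formula for binary meets in $\@K(E)$ established in the proof of \cref{thm:k-meetjoin}: for $A, B \in \@B(X)$, the Becker--Kechris comparison \cref{lm:compare} yields an $E$-invariant Borel partition $X = Y \sqcup Z$ with $A \cap Y \preceq_E B \cap Y$ and $A \cap Z \succ_E B \cap Z$, and
\[ \~A \wedge \~B = [(A \cap Y) \cup (B \cap Z)]_{\sim_E}. \]
Both directions of the equivalence will come out of manipulating this formula, using only that $\mu$ is $\sim_E$-invariant and countably additive (i.e., a $\sum$-homomorphism by \cref{thm:k-homom-measure}).

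For the forward direction, I will assume $\mu$ is ergodic and fix $\~A, \~B \in \@K(E)$ together with $Y, Z$ as above. Since $Y, Z$ are $E$-invariant, ergodicity forces $\mu(Y) = 0$ or $\mu(Z) = 0$. In the case $\mu(Z) = 0$, applying $\mu$ to the meet formula gives $\mu(\~A \wedge \~B) = \mu(A \cap Y)$, and the relation $A \cap Y \preceq_E B \cap Y$ with $\mu$-invariance yields $\mu(A) = \mu(A \cap Y) \le \mu(B \cap Y) \le \mu(B)$, so $\mu(\~A \wedge \~B) = \mu(A) = \mu(A) \wedge \mu(B)$. The case $\mu(Y) = 0$ is symmetric, using that $A \cap Z \succ_E B \cap Z$ implies $B \cap Z \preceq_E A \cap Z$ and hence $\mu(B) \le \mu(A)$.

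For the backward direction, I will observe the general fact that if $A, B \in \@B(X)$ are disjoint and $E$-invariant, then $\~A \wedge \~B = 0$ in $\@K(E)$: any $C \in \@B(X)$ with $\~C \le \~A$ and $\~C \le \~B$ admits equidecompositions into subsets of $A$ and of $B$ respectively, but an equidecomposition stays inside the $E$-saturation of its domain, and since $A, B$ are $E$-invariant this forces $C \subseteq A \cap B = \emptyset$. Now given an $E$-invariant Borel $A \subseteq X$, we have $\~A \wedge \~{X \setminus A} = 0$, so if $\mu$ preserves finite meets then $\mu(A) \wedge \mu(X \setminus A) = 0$, giving ergodicity.

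The only step that requires any care is checking that disjoint $E$-invariant sets have meet $0$ in $\@K(E)$; everything else is direct substitution into the meet formula. I do not anticipate any real obstacles here.
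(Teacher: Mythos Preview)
Your approach matches the paper's almost exactly: both directions go through the Becker--Kechris partition and the meet formula $\~A \wedge \~B = [(A \cap Y) \cup (B \cap Z)]_{\sim_E}$, and your backward direction via $\~A \wedge \~{X \setminus A} = 0$ is the same (you just spell out the easy justification that the paper leaves implicit).

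There is one small gap. ``Finite meets'' includes the nullary meet, i.e., the greatest element $\infty = \~X \in \@K(E)$, and preservation of this is the condition $\mu(X) = \infty$. The paper handles this explicitly: by compressibility of $E$, any nonzero $E$-invariant measure is infinite, so $\mu$ preserves the top element iff $\mu \ne 0$. This is needed on both sides of the equivalence (ergodic measures are nonzero by convention, and conversely preservation of finite meets forces $\mu \ne 0$). Your proposal only treats binary meets and never touches this point, so as written it would not rule out the zero measure nor verify $\mu(\~X) = \infty$ in the forward direction. The fix is a one-line observation, but you should include it.
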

\begin{proof}
By compressibility of $E$, every nonzero $E$-invariant measure is infinite; thus $\mu$ preserves the greatest element $\infty$ iff $\mu$ is nonzero.  If $\mu$ is nonzero and preserves binary meets, then for every $E$-invariant Borel $A \subseteq X$, we have $\~A \wedge \~{X \setminus A} = 0$, whence $\mu(A) \wedge \mu(X \setminus A) = \mu(\~A \wedge \~{X \setminus A}) = \mu(0) = 0$, whence either $\mu(A) = 0$ or $\mu(X \setminus A) = 0$, i.e., $\mu$ is ergodic.  Conversely, if $\mu$ is ergodic, then for every $\~A, \~B \in \@K(E)$, letting $X = Y \sqcup Z$ be given by \cref{lm:compare}, we have either $\mu(Y) = 0$ or $\mu(Z) = 0$; in the former case, we have $\mu(A) = \mu(A \cap Z) \ge \mu(B \cap Z) = \mu(B)$ (since $A \cap Z \succ_E B \cap Z$), whence $\mu(\~A \wedge \~B) = \mu((A \cap Y) \cup (B \cap Z)) = \mu(B \cap Z) = \mu(B) = \mu(A) \wedge \mu(B)$, while in the latter case we similarly have $\mu(\~A \wedge \~B) = \mu(A) = \mu(A) \wedge \mu(B)$.
\end{proof}

For a cardinal algebra $A$, we say that a map $\mu : A -> \-{\#R^+}$ is a \defn{$(\sum, \wedge, \bigvee, \-{\#R^+})$-homomorphism} if it preserves countable sums, finite meets, countable joins, and real multiples of completely divisible elements.  The preceding lemmas now give

\begin{proposition}
We have a canonical bijection
\begin{align*}
\EINV^*_E &\cong \{\text{$(\sum, \wedge, \bigvee, \-{\#R^+})$-homomorphisms } \@K(E) -> \-{\#R^+}\}
\end{align*}
where $\EINV^*_E \subseteq \INV^*_E$ denotes the (not necessarily $\sigma$-finite) $E$-ergodic invariant measures.  \qed
\end{proposition}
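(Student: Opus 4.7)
The plan is to assemble this proposition directly from the lemmas immediately preceding it, with no new construction needed; the bijection is the one already established in \cref{thm:k-homom-measure}, and the content is simply that ergodicity on the measure-theoretic side is equivalent to preserving all the extra algebraic structure on the homomorphism side.

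First I would observe that \cref{thm:k-homom-measure} gives a bijection between $\INV^*_E$ and the set of $\sum$-homomorphisms $\@K(E) \to \-{\#R^+}$. It thus suffices to show that under this correspondence, $\mu \in \EINV^*_E$ if and only if the associated $\sum$-homomorphism is in fact a $(\sum, \wedge, \bigvee, \-{\#R^+})$-homomorphism.

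For the forward direction, suppose $\mu$ is ergodic. By \cref{lm:k-meas-ergodic}, $\mu$ preserves finite meets. Then \cref{lm:ca-homom-meetjoin} upgrades this to preservation of countable joins, and \cref{lm:ca-homom-sfiltjoin-rmult} (whose conclusions hold for every $\sum$-homomorphism, not just ergodic ones) supplies preservation of countable increasing joins as well as real multiples of completely divisible elements. Together these are exactly the four operations in the definition of a $(\sum, \wedge, \bigvee, \-{\#R^+})$-homomorphism.

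For the reverse direction, any $(\sum, \wedge, \bigvee, \-{\#R^+})$-homomorphism is in particular a $\sum$-homomorphism preserving finite meets, and thus ergodic by \cref{lm:k-meas-ergodic}. There is no real obstacle here: the proof is pure bookkeeping among the four lemmas, and the only thing to verify is that the list of extra preserved operations in the definition is precisely the list produced by combining those lemmas, which it is.
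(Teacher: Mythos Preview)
Your proposal is correct and matches the paper's approach exactly: the proposition is stated with a \qed because it follows immediately from the preceding lemmas, and you have identified precisely which ones (\cref{thm:k-homom-measure}, \cref{lm:k-meas-ergodic}, \cref{lm:ca-homom-meetjoin}, \cref{lm:ca-homom-sfiltjoin-rmult}) and how they combine.
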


\begin{remark}
Non-$\sigma$-finite measures are not so tractable: for any $\sigma$-complete ultrafilter $\@U$ of $E$-invariant Borel subsets of $X$, we have an $E$-ergodic invariant measure $\mu \in \EINV^*_E$, given by $\mu(A) = 0$ if $[A]_E \not\in \@U$, else $\mu(A) = \infty$.
\end{remark}

Finally in this section, we show that there are ``enough'' homomorphisms $\@K(E) -> \-{\#R^+}$.  Because of the preceding remark, we will in fact only consider homomorphisms corresponding to $\sigma$-finite measures.  Let $\EINV^\sigma_E \subseteq \EINV^*_E$ denote the subset of $\sigma$-finite measures.

\begin{lemma}
\label{lm:k-embed}
For any $\~A \not\le \~B \in \@K(E)$, there is a $\mu \in \EINV^\sigma_E$ such that $\mu(\~A) > \mu(\~B)$.
\end{lemma}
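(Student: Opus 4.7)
First, by applying \cref{lm:compare} to $A$ and $B$, I obtain an $E$-invariant Borel partition $X = Y \sqcup Z$ with $A \cap Y \preceq_E B \cap Y$ and $A \cap Z \succ_E B \cap Z$.  Since $\~A \not\le \~B$ amounts to $A \not\preceq_E B$, the piece $Z$ is nonempty; after restricting $E, A, B$ to $Z$ I may assume $A \succ_E B$ on all of $X$.  By definition, this yields a Borel $C \subseteq A$ with $C \sim_E B$ and $[A \setminus C]_E = [A]_E$; since $B \sim_E C$ forces $B \subseteq [A]_E$, I further restrict to $[A]_E$ (calling it $X$), whereupon $D := A \setminus C$ is a complete $E$-section of $X$.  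Then $\mu(A) = \mu(B) + \mu(D)$ for every $\mu \in \INV^*_E$, so it suffices to find $\mu \in \EINV^\sigma_E$ with $\mu(D) > 0$ and $\mu(B) < \infty$.

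The simplest candidates are counting measures on single $E$-orbits: for any $x \in X$, the measure $\mu_x$ assigning mass $1$ to each point of $[x]_E$ (and $0$ elsewhere) is $E$-invariant, ergodic, and $\sigma$-finite, hence lies in $\EINV^\sigma_E$.  If I can pick $x \in D$ with $|B \cap [x]_E| < \infty$, then $\mu_x(D) \ge 1 > 0$ and $\mu_x(B) < \infty$, giving $\mu_x(A) > \mu_x(B)$ at once.  The existence of such an $x$ follows by contradiction: if $|B \cap [x]_E| = \infty$ for every $x \in D$, then by $E$-invariance of this condition and the completeness of the section $D$ the same would hold for every $x \in X$, so $|A \cap [x]_E| \le \aleph_0 = |B \cap [x]_E|$ for every $x$.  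By the standard Borel cardinality criterion for countable Borel equivalence relations (a Schroeder--Bernstein-type theorem for $\preceq_E$; see e.g.\ \cite[\S2]{DJK}), this would force $A \preceq_E B$, contradicting $A \succ_E B$.

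The principal technical obstacle is the appeal to this cardinality criterion, which though standard is nontrivial.  An alternative more in keeping with the paper's earlier methods is to refine a good Polish topology so that $D$ is clopen and then apply \cref{thm:topmeasdecomp}: each component $C$ of $X//E$ meets $D$ in a nonempty open set (since $D$ is a complete section and $E|C$ is minimal), so for any $C \in R$ the measure $\mu_C$ automatically satisfies $\mu_C(D) > 0$; securing $\mu_C(B) < \infty$ would then be arranged by further refining the topology so that $B$ is covered by $\mu_C$-finite open sets.
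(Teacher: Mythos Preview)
Your main argument has a genuine gap: the ``Borel cardinality criterion'' you invoke does not exist. The pointwise condition $|A \cap [x]_E| \le |B \cap [x]_E|$ for all $x$ does \emph{not} imply $A \preceq_E B$; the Schroeder--Bernstein theorem in \cite[\S2]{DJK} is the quite different statement that $A \preceq_E B$ and $B \preceq_E A$ together imply $A \sim_E B$. After your reductions the restricted relation need not be compressible, and the situation you try to rule out can perfectly well occur: if the restricted relation is $E_0$ on $2^{\mathbb N}$ with $A = 2^{\mathbb N}$, $B = C = \{x : x_0 = 0\}$, $D = \{x : x_0 = 1\}$, then $A \succ_E B$, $D$ is a complete section, and every class meets $B$ infinitely, yet $A \preceq_E B$ fails (it would force $2^{\mathbb N} \sim_{E_0} B$ and hence make $E_0$ compressible). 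So no contradiction is obtained, and your counting-measure argument simply does not cover this central case. Your sketched alternative via \cref{thm:topmeasdecomp} is too vague to fill the gap: the measures $\mu_C$ themselves depend on the chosen topology, and you give no argument for how to refine so that $\mu_C(B) < \infty$.

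The paper's proof handles exactly this case via Nadkarni's theorem. After reducing (as you do) so that $A \succ_E B$ and both $A, B$ are complete sections, one observes that $E|B$ cannot be compressible, since that would give $\~B = \infty \ge \~A$. Nadkarni then supplies an ergodic $E|B$-invariant probability measure, and its extension to $X$ via \cref{thm:meas-extend} satisfies $\mu(B) = 1$ while $\mu(A) > \mu(B)$ because $A \succ_E B$.
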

\begin{proof}
Since $A \not\preceq_E B$, by restricting $E$ to the set $Z$ given by \cref{lm:compare}, we may assume that $A \succ_E B$.  If $[A]_E \not\subseteq [B]_E$, then we may let $\mu$ be an atomic measure; so we may restrict $E$ to $[A]_E$, and assume both $A$ and $B$ are $E$-complete sections.  If $E|B$ were compressible, then we would have $\~B = \infty \ge \~A$, a contradiction.  Thus $E|B$ is not compressible, hence has an ergodic invariant probability measure $\mu$ by Nadkarni's theorem.  Extending $\mu$ to $E$ using \cref{thm:meas-extend}, we have $\mu(A) > \mu(B)$ since $A \succ_E B$ and $\mu(B) < \infty$, as desired.
\end{proof}

\begin{proposition}
\label{thm:k-embed}
We have an embedding
\begin{align*}
\eta : \@K(E) &`-> \-{\#R^+}^{\EINV^\sigma_E} \\
\~A &|-> (\mu |-> \mu(A))
\end{align*}
preserving countable sums, finite meets, countable joins, and real multiples of completely divisible elements (with the pointwise operations in $\-{\#R^+}^{\EINV^\sigma_E}$).  \qed
\end{proposition}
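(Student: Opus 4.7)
The plan is to assemble the proposition essentially as a bookkeeping corollary of the work already done: Proposition \ref{thm:k-homom-measure} identifies measures with $\sum$-homomorphisms, Lemmas \ref{lm:ca-homom-sfiltjoin-rmult}, \ref{lm:ca-homom-meetjoin}, \ref{lm:k-meas-ergodic} upgrade each such $\mu \in \EINV^\sigma_E$ to a $(\sum, \wedge, \bigvee, \-{\#R^+})$-homomorphism $\@K(E) \to \-{\#R^+}$, and Lemma \ref{lm:k-embed} supplies enough of them to reflect the order.

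First I would verify that $\eta$ is well-defined: if $A \sim_E B$, then $\mu(A) = \mu(B)$ for every $\mu \in \INV^*_E$ by definition of $E$-invariance, so the coordinate-$\mu$ value $\mu \mapsto \mu(A)$ depends only on $\~A$. Next I would note that each coordinate projection $\eta_\mu : \@K(E) \to \-{\#R^+}$, $\~A \mapsto \mu(A)$, is by Proposition \ref{thm:k-homom-measure} the $\sum$-homomorphism corresponding to $\mu \in \EINV^\sigma_E \subseteq \EINV^*_E \subseteq \INV^*_E$; by Lemma \ref{lm:ca-homom-sfiltjoin-rmult} it automatically preserves countable increasing joins and real multiples of completely divisible elements; by Lemma \ref{lm:k-meas-ergodic} it preserves finite meets (using ergodicity of $\mu$); and then Lemma \ref{lm:ca-homom-meetjoin} gives preservation of countable joins. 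Since all four operations on $\-{\#R^+}^{\EINV^\sigma_E}$ are defined pointwise, $\eta$ preserves them as well.

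It remains to show $\eta$ is an embedding, which I would do by checking that it reflects the canonical partial order. Suppose $\~A \not\le \~B \in \@K(E)$; by Lemma \ref{lm:k-embed} there exists $\mu \in \EINV^\sigma_E$ with $\mu(A) > \mu(B)$, i.e., $\eta_\mu(\~A) > \eta_\mu(\~B)$, so $\eta(\~A) \not\le \eta(\~B)$ pointwise. The converse implication $\~A \le \~B \implies \eta(\~A) \le \eta(\~B)$ is immediate from monotonicity of every $\sum$-homomorphism (\S\ref{sec:ca}\ref{ca:order}) applied coordinatewise. In particular $\eta$ is injective, since the partial order on $\@K(E)$ is antisymmetric: if $\~A \ne \~B$ then at least one of $\~A \not\le \~B$ or $\~B \not\le \~A$ holds, and the above argument applies.

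There is no real obstacle; the only nontrivial input is Lemma \ref{lm:k-embed}, which was already proved via Nadkarni's theorem and \cref{thm:meas-extend}. The proof thus reduces to citing the preceding results, and should occupy only a line or two.
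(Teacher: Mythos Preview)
Your proposal is correct and matches the paper's own approach: the proposition is stated with a \qed{} symbol and no explicit proof, precisely because it is meant to follow immediately from \cref{thm:k-homom-measure}, \cref{lm:ca-homom-sfiltjoin-rmult}, \cref{lm:ca-homom-meetjoin}, \cref{lm:k-meas-ergodic}, and \cref{lm:k-embed} in exactly the way you describe. Your unpacking of the argument---coordinatewise preservation of operations via the ergodicity lemmas, and order-reflection via \cref{lm:k-embed}---is just the intended one-line deduction made explicit.
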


By universal algebra, we may rephrase this result as follows.  A \defn{Horn axiom} in the operations $\sum, \wedge, \bigvee, \-{\#R^+}$ (countable sums, finite meets, countable joins, and real multiples of completely divisible elements) is an axiom of the form
\begin{align*}
\forall \vec{v}\, [\bigwedge_i (s_i(\vec{v}) = t_i(\vec{v})) -> (s(\vec{v}) = t(\vec{v}))]
\end{align*}
where $s_i, t_i, s, t$ are terms built from the specified operations and the (possibly infinitely many) variables $\vec{v}$ (in the case of the partially defined operations $\-{\#R^+}$, we interpret the right-hand side of the implication to mean ``if both terms are defined, then the equality holds'').

\begin{corollary}
$\@K(E)$ obeys all Horn axioms in the operations $\sum, \wedge, \bigvee, \-{\#R^+}$ which hold in the algebra $\-{\#R^+}$.  \qed
\end{corollary}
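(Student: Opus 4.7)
The plan is to derive this as a standard universal-algebraic consequence of the embedding $\eta$ constructed in \cref{thm:k-embed}. The two facts we need are that Horn axioms in the relevant signature are \emph{preserved} under arbitrary (pointwise) powers, and are \emph{reflected} by injective homomorphisms; combining these with $\eta$ gives the result at once.

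First I would observe that $\-{\#R^+}^{\EINV^\sigma_E}$, equipped with the pointwise operations $\sum, \wedge, \bigvee, \-{\#R^+}$, satisfies every Horn axiom that $\-{\#R^+}$ satisfies. Indeed, given a tuple $\vec f$ of functions $\EINV^\sigma_E \to \-{\#R^+}$ on which all the hypothesis-terms $s_i(\vec f), t_i(\vec f)$ are defined and satisfy $s_i(\vec f) = t_i(\vec f)$, evaluation at any $\mu \in \EINV^\sigma_E$ gives a tuple in $\-{\#R^+}$ satisfying the same hypotheses, hence (by assumption on $\-{\#R^+}$) the conclusion $s(\vec f)(\mu) = t(\vec f)(\mu)$; and pointwise equality is equality in the product. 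The partial operation $\-{\#R^+}$ needs a brief sanity check: every element of $\-{\#R^+}$, and hence every element of the product, is completely divisible, so real multiples are unproblematic in the product.

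Second, for a Horn axiom holding in $\-{\#R^+}$, I would verify it in $\@K(E)$ by transporting along $\eta$. Given elements $\vec a$ in $\@K(E)$ on which the terms of the axiom are defined and satisfying the hypotheses $s_i(\vec a) = t_i(\vec a)$, apply $\eta$: since $\eta$ preserves countable sums, finite meets, countable joins, and real multiples of completely divisible elements (\cref{thm:k-embed}), we get $s_i(\eta(\vec a)) = t_i(\eta(\vec a))$ in the product. (Complete divisibility of any components of $\vec a$ required to define the various real multiples guarantees that the corresponding components of $\eta(\vec a)$ are also completely divisible, so the terms on the product side are also defined.) By the previous paragraph, $s(\eta(\vec a)) = t(\eta(\vec a))$, which equals $\eta(s(\vec a)) = \eta(t(\vec a))$ by preservation of operations, and finally $s(\vec a) = t(\vec a)$ by injectivity of $\eta$.

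There is essentially no obstacle; the only thing to be careful about is the partial operation $\-{\#R^+}$, which is easily handled since every element of $\-{\#R^+}^{\EINV^\sigma_E}$ is completely divisible and $\eta$ preserves the real-multiple operation wherever the source side is defined. The corollary is thus a direct application of the preceding proposition together with the preservation/reflection principles for Horn theories.
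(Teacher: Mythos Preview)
Your proposal is correct and takes essentially the same approach as the paper: the corollary is stated with a bare \qed\ immediately after \cref{thm:k-embed}, since it is the standard universal-algebraic fact that Horn axioms pass to products and to subalgebras (via the operation-preserving embedding $\eta$). You have simply written out this routine argument in detail, including the care needed for the partial real-multiple operation.
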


We end this section by summarizing all the properties of the algebra $\@K(E)$ we have considered:

\begin{theorem}
\label{thm:k}
$\@K(E)$ is a cardinal algebra, with finite meets and (hence) countable joins, and with completely divisible elements coinciding with the aperiodic ones $\@K^\ap(E) \subseteq \@K(E)$ which form a cardinal subalgebra.
We have canonical bijections (where $\bigvee^\up$ denotes countable increasing joins)
\begin{align*}
\INV^*_E &\cong \{\text{$\sum$-homomorphisms } \@K(E) -> \-{\#R^+}\} \\
&= \{\text{$(\sum, \bigvee^\up, \-{\#R^+})$-homomorphisms } \@K(E) -> \-{\#R^+}\}, \\
\EINV^*_E &\cong \{\text{$(\sum, \wedge, \bigvee, \-{\#R^+})$-homomorphisms } \@K(E) -> \-{\#R^+}\}.
\end{align*}
There are enough $(\sum, \wedge, \bigvee, \-{\#R^+})$-homomorphisms $\@K(E) -> \-{\#R^+}$ to separate points: we have an $(\sum, \wedge, \bigvee, \-{\#R^+})$-embedding
\begin{align*}
\eta : \@K(E) &--> \-{\#R^+}^{\EINV^\sigma_E} \\
\~A &|--> (\mu |-> \mu(A)).
\end{align*}
In particular, $\@K(E)$ obeys all Horn axioms in the operations $\sum, \wedge, \bigvee, \-{\#R^+}$ that hold in $\-{\#R^+}$.  \qed
\end{theorem}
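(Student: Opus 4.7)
The plan is to assemble this theorem directly from the results already proved in this section; the $\qed$ on the statement indicates that no new content is introduced, and indeed each clause has appeared earlier as a separate proposition or lemma. Thus I expect no genuine obstacle, only bookkeeping.

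First, I would handle the algebraic structure on $\@K(E)$: that $\@K(E)$ is a cardinal algebra, with canonical partial order $\~A \le \~B \iff A \preceq_E B$, is \cref{thm:k-ca}; existence of finite meets, and therefore by \S\ref{sec:ca}\ref{ca:meet} and \S\ref{sec:ca}\ref{ca:sfiltjoin} of arbitrary countable joins, is \cref{thm:k-meetjoin}; and the identification of completely divisible elements with the aperiodic ones, together with the subalgebra property of $\@K^\ap(E)$, is \cref{thm:k-div} and the unlabeled proposition immediately following it.

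Next, for the measure correspondences: \cref{thm:k-homom-measure} yields the bijection $\INV^*_E \cong \{\sum\text{-homomorphisms}\}$. The equality with the class of $(\sum, \bigvee^\up, \-{\#R^+})$-homomorphisms is \cref{lm:ca-homom-sfiltjoin-rmult}, which shows that in any cardinal algebra every $\sum$-homomorphism automatically preserves countable increasing joins and real multiples of completely divisible elements, so enlarging the signature in this way does not change the set of homomorphisms. The ergodic bijection $\EINV^*_E \cong \{(\sum, \wedge, \bigvee, \-{\#R^+})\text{-homomorphisms}\}$ is the unlabeled proposition stated just before the remark on non-$\sigma$-finite measures; it results from combining \cref{lm:k-meas-ergodic} (ergodicity $\iff$ preservation of finite meets), \cref{lm:ca-homom-meetjoin} (preservation of binary meets upgrades to countable joins), and again \cref{lm:ca-homom-sfiltjoin-rmult}.

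Finally, the $(\sum, \wedge, \bigvee, \-{\#R^+})$-embedding $\eta$ is \cref{thm:k-embed}, which itself rests on the separating property of \cref{lm:k-embed} (via Nadkarni's theorem and \cref{thm:meas-extend}). The Horn-axiom clause is the corollary stated just before this theorem; if I were to reproduce its proof, I would appeal to the standard universal-algebraic fact that a Horn sentence in a given signature is preserved under direct powers and under substructures, so any such sentence holding in $\-{\#R^+}$ holds coordinatewise in $\-{\#R^+}^{\EINV^\sigma_E}$ and hence pulls back along the embedding $\eta$ to $\@K(E)$. There is nothing further to verify, which is why the statement ends with $\qed$.
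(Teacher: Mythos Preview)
Your proposal is correct and matches the paper's approach exactly: the theorem is a summary statement (hence the $\qed$), and you have correctly identified every clause with the earlier result that proves it. There is nothing to add.
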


\subsection{The algebra $\@L(E)$}

We next consider an algebra $\@L(E)$ closely related to $\@K(E)$.  As before, here $(X, E)$ is a compressible countable Borel equivalence relation.

Let $\@C(X)$ denote the set of Borel maps $X -> \-{\#R^+} = [0, \infty]$; we think of $\alpha \in \@C(X)$ as a ``weighted Borel subset'' of $X$.  Given $\alpha, \beta \in \@C(X)$, an \defn{$E$-equidecomposition}
\begin{align*}
\phi : \alpha \sim_E \beta
\end{align*}
is a Borel map $\phi : E -> \-{\#R^+}$ (where $E \subseteq X^2$) such that
\begin{align*}
\alpha(x) = \sum_{y \mathrel{E} x} \phi(x, y) =: \dom(\phi)(x), &&
\beta(y) = \sum_{x \mathrel{E} y} \phi(x, y) =: \rng(\phi)(y);
\end{align*}
$\alpha, \beta$ are \defn{$E$-equidecomposable}, written $\alpha \sim_E \beta$, if there is some $\phi : \alpha \sim_E \beta$.
We may think of such $\phi$ as a two-dimensional Borel ``matrix'' on each $E$-class $C$, whose row and column sums yield $\alpha, \beta$ respectively.

Our goal in this section is to show that $\@L(E) := \@C(X)/{\sim_E}$ is a cardinal algebra satisfying analogous properties to those in \cref{thm:k}, and in fact is a ``completion'' of $\@K(E)$ by adjoining divisors for indivisible elements; see \cref{thm:l}(ii).  This will require several preliminary steps: note that it is not even obvious that $\sim_E$ is an equivalence relation.

The following technical lemma says that the doubly infinitary version of axiom \S\ref{sec:ca}(C) (which holds in any cardinal algebra \cite[2.1]{Tar}) holds ``in a Borel way'' in the cardinal algebra $\-{\#R^+}$, depicted as follows:
\begin{equation*}
\begin{array}{c|cccc}
& v(0) & v(1) & v(2) & \dotsb \\
\hline
u(0) & d(u, v)(0, 0) & d(u, v)(0, 1) & d(u, v)(0, 2) & \dotsb \\
u(1) & d(u, v)(1, 0) & d(u, v)(1, 1) & d(u, v)(1, 2) & \dotsb \\
u(2) & d(u, v)(2, 0) & d(u, v)(2, 1) & d(u, v)(2, 2) & \dotsb \\
\vdots & \vdots & \vdots & \vdots & \ddots
\end{array}
\end{equation*}

\begin{lemma}
\label{lm:r-refine-borel}
There is a Borel map
\begin{align*}
d : \{(u, v) \in \-{\#R^+}^\#N \times \-{\#R^+}^\#N \mid \sum_i u(i) = \sum_j v(j)\} --> \-{\#R^+}^{\#N^2}
\end{align*}
such that for all $u, v$,
\begin{align*}
u(i) = \sum_j d(u, v)(i, j), &&
v(j) = \sum_i d(u, v)(i, j).
\end{align*}
\end{lemma}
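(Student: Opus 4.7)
The plan is to construct $d$ explicitly, using an interval-overlap formula in the all-finite case and handling $\infty$-entries via a case analysis.

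For the all-finite case $u, v \in [0, \infty)^\#N$, let the cumulative sums $U_i := \sum_{k < i} u(k)$ and $V_j := \sum_{k < j} v(k)$ (both in $[0, \infty]$), and set
\begin{align*}
d(u, v)(i, j) := \lambda\bigl([U_i, U_{i+1}) \cap [V_j, V_{j+1})\bigr)
\end{align*}
with $\lambda$ Lebesgue measure on $[0, \infty)$. Since both $\{[U_i, U_{i+1})\}_i$ and $\{[V_j, V_{j+1})\}_j$ partition $[0, T)$ for $T = \sum_i u(i) = \sum_j v(j)$, the row sum telescopes to $\lambda([U_i, U_{i+1})) = u(i)$ and the column sums are symmetric; the formula is manifestly Borel in $u, v$.

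For the general case, let $I(u) := \{i : u(i) = \infty\}$ and $J(v) := \{j : v(j) = \infty\}$, both Borel in $(u, v)$.  Partition $\#N \times \#N$ into four blocks according to membership in $I(u) \times J(v)$ and its complements.  On the $\infty \times \infty$ block $I(u) \times J(v)$, set $d = \infty$, auto-saturating the row sums of $\infty$-rows and column sums of $\infty$-columns.  On the block $I(u) \times (\#N \setminus J(v))$, allocate each $v(j)$ among the $i \in I(u)$ via fixed weights $\alpha_k > 0$ with $\sum_k \alpha_k \leq 1$ (e.g.\ $\alpha_k := 2^{-k-1}$), with $1 - \sum_k \alpha_k$ left over to feed finite-row placements; do the symmetric thing on $(\#N \setminus I(u)) \times J(v)$.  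On the finite-by-finite block $(\#N \setminus I(u)) \times (\#N \setminus J(v))$, apply the all-finite formula to the resulting residual sequences.

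The main obstacle will be the bookkeeping of the $\infty$-blocks: the correct choice of weights depends on whether $I(u), J(v)$ are empty, finite, or infinite, and on whether the finite-part sums $\sum_{i \notin I(u)} u(i)$ and $\sum_{j \notin J(v)} v(j)$ are equal, each splitting into several Borel sub-cases (which always balance because the ambient totals agree).  Each case produces a closed-form Borel expression, and the aggregate $d$ is Borel via its assembly by Borel predicates.  Pointwise existence could alternatively be extracted from the doubly-infinitary refinement axiom in the cardinal algebra $\-{\#R^+}$, that is, \cite[2.1]{Tar}, which handles the $\infty$ bookkeeping abstractly and can be coupled with a selection argument to produce the Borel $d$ without an explicit formula.
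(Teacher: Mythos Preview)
Your interval-overlap formula for the all-finite case is correct and in fact produces exactly the same matrix as the paper's greedy northwest-corner procedure (case~III): both compute the overlaps of the two interval partitions of $[0,T)$, just described differently.

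The $\infty$-block scheme, however, has a concrete gap as written. Take $u = (\infty, 1, 0, 0, \ldots)$ and $v = (1, 1, 1, \ldots)$: then $I(u) = \{0\}$ and $J(v) = \emptyset$, so the $\infty\times\infty$ block is empty. Your prescription on $\{0\} \times \#N$ puts $d(0,j) = \alpha_0\, v(j) = \tfrac12$, leaving residual column masses $v'(j) = \tfrac12$ with $\sum_j v'(j) = \infty$; but the residual row masses are $u'(i) = u(i)$ for $i \ge 1$, with $\sum_i u'(i) = 1$. These do not match, so the all-finite formula cannot be applied on the remaining block. The assertion that the sub-cases ``always balance because the ambient totals agree'' is exactly where the argument breaks: when the ambient total is $\infty$, the finite-part sums $\sum_{i \notin I(u)} u(i)$ and $\sum_{j \notin J(v)} v(j)$ are completely unconstrained relative to each other. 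The fix is to first split $v = v' + v''$ with $\sum_j v'(j)$ equal to the finite part of $\sum_i u(i)$, feed $v''$ into the $\infty$-rows, and match the finite parts separately --- which is precisely the paper's case~(IV) (reducing to cases~(II) and~(III)). No single fixed choice of weights $\alpha_k$ does this uniformly.

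The selection-theoretic alternative also needs more than you supply. Pointwise existence via \cite[2.1]{Tar} is fine, but the fiber of admissible $w$ over a given $(u,v)$, while a convex $G_\delta$ in the compact metrizable space $[0,\infty]^{\#N^2}$, is not closed (mass can escape to infinity under pointwise limits), so Kuratowski--Ryll-Nardzewski does not apply directly; you would need to name a uniformization theorem that actually handles these sections.
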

\begin{proof}
We define $d(u, v)$ by cases:
\begin{enumerate}

\item[(I)]  Suppose $u(i) = v(j) = \infty$ for some $i, j$.  Let $i_0, j_0$ be the least such.  Put
\begin{align*}
d(u, v)(i, j) := \begin{cases}
v(j) &\text{if $i = i_0$}, \\
u(i) &\text{if $j = j_0$}, \\
0 &\text{otherwise}.
\end{cases}
\end{align*}

\item[(II)]  Suppose $u(i) \in \{0, \infty\}$ for all $i$, while $v(j) < \infty$ for all $j$.  If $u = v = 0$ then put $d(u, v) := 0$.  Otherwise, from $\sum_i u(i) = \sum_j v(j)$ we have $\sum_j v(j) = \infty$.  Let $f : \#N -> \{i \mid u(i) = \infty\}$ take each value in the codomain infinitely often (clearly such $f$ can be found in a Borel way from $u$).  Put $k_0 := 0$, and inductively let $k_{l+1} > k_l$ be least such that $\sum_{j=k_l}^{k_{l+1}-1} v(j) > 1$ (using that $\sum_j v(j) = \infty$).  Put
\begin{align*}
d(u, v)(i, j) := \begin{cases}
v(j) &\text{if $u(i) = \infty$ and $\exists l\, (f(l) = i \AND k_l \le j < k_{l+1})$}, \\
0 &\text{otherwise}.
\end{cases}
\end{align*}
The picture (when $u(i) = \infty$ for $i = 0, 2$) is as follows:
\begin{equation*}
\begin{array}{c|cccccccc}
& v(0) & v(1) & v(2) & v(3) & v(4) & v(5) & v(6) & \dotsb \\
\hline
\infty & v(0) & v(1) & & & & v(5) & & \dotsb \\
0 & \\
\infty & & & v(2) & v(3) & v(4) & & v(6) & \dotsb \\
\vdots &
\end{array}
\end{equation*}
In each row $i$ with $u(i) = \infty$, we put enough consecutive values of $v$ to achieve a sum $>1$ before switching to a different $i$; since each such row is visited infinitely often, its sum will be $\infty$.

The case where $u(i) < \infty$ for all $i$ and $v(j) \in \{0, \infty\}$ for all $j$ is symmetric.

\item[(III)]  Suppose $u(i), v(j) < \infty$ for all $i, j$.  Define $d(u, v)$ as follows:
\begin{itemize}
\item  Set $i_0 := j_0 := 0$.
\item  Inductively for each $k$, put
\begin{align*}
r_k := u(i_k) - \sum_{\substack{l < k \\ i_l = i_k}} d(u, v)(i_l, j_l), &&
s_k := v(j_k) - \sum_{\substack{l < k \\ j_l = j_k}} d(u, v)(i_l, j_l).
\end{align*}
If $r_k \le s_k$, put $d(u, v)(i_k, j_k) := r_k$ and $(i_{k+1}, j_{k+1}) := (i_k+1, j_k)$.  Otherwise, put $d(u, v)(i_k, j_k) := s_k$ and $(i_{k+1}, j_{k+1}) := (i_k, j_k+1)$.
\item  For all other $(i, j)$ not equal to some $(i_k, j_k)$, put $d(u, v)(i, j) := 0$.
\end{itemize}
The picture is as follows:
\begin{equation*}
\begin{array}{c|cccccccc}
& v(0) & v(1) & v(2) & v(3) & v(4) & v(5) & v(6) & \dotsb \\
\hline
u(0) & r_0 & & & & & & & \\
u(1) & r_1 & & & & & & & \\
u(2) & s_2 & s_3 & s_4 & r_5 & & & & \\
u(3) &     &     &     & s_6 & s_7 & r_8 & & \\
u(4) &     &     &     &     &     & r_9 & & \\
u(5) &     &     &     &     &     & s_{10} & \smash{\ddots} \\
\vdots &
\end{array}
\end{equation*}
The marked entries $(i, j)$ are $(i_0, j_0), (i_1, j_1), (i_2, j_2), \dotsc$.
At each step $(i_k, j_k)$, we place the largest possible value to ``fill out'' the remaining $u(i_k)$ or $v(j_k)$ (after subtracting the previous values in that row/column), and then move down or right in order to continue ``filling out'' the other value.
Using that $\sum_i u(i) = \sum_j v(j)$, it is straightforward to check that this works.
(Note that it is possible to have $\lim_{k -> \infty} i_k < \infty$ or $\lim_{k -> \infty} j_k < \infty$, if $\sum_i u(i) = \sum_j v(j) < \infty$.)

\item[(IV)]  In the remaining case, $u$ (say) takes infinite and nonzero finite values, while $v$ only takes finite values.  Let $u'(i) := u(i)$ if $u(i) < \infty$ and $u'(i) := 0$ otherwise, and put $u'' := u - u'$.  Define $v', v''$ such that $v = v' + v''$, $\sum_i u'(i) = \sum_j v'(j)$, and $\sum_i u''(i) = \sum_j v''(j)$, as follows: if $\sum_i u'(i) < \infty$, then let $k$ be least such that $\sum_{j \le k} v(j) > \sum_i u'(i)$, put
\begin{align*}
v'(j) := \begin{cases}
v(j) &\text{if $j < k$}, \\
\sum_i u'(i) - \sum_{j < k} v(j) &\text{if $j = k$}, \\
0 &\text{if $j > k$},
\end{cases}
\end{align*}
and $v'' := v - v'$; otherwise find $v', v''$ with $v = v' + v''$ and $\sum_j v'(j) = \sum_j v''(j) = \infty$ using a procedure similar to case (II).  We may then put $d(u, v) := d(u', v') + d(u'', v'')$, where the latter are computed using cases (II) and (III) above.
\qedhere

\end{enumerate}
\end{proof}

\begin{proposition}
\label{thm:l-trans}
$\sim_E$ is an equivalence relation on $\@C(X)$.
\end{proposition}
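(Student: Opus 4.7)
The plan is to verify reflexivity, symmetry, and transitivity in turn. Reflexivity is witnessed by the ``identity'' equidecomposition $\phi_\Delta : E -> \-{\#R^+}$ defined by $\phi_\Delta(x, y) := \alpha(x)$ if $x = y$ and $0$ otherwise; this is Borel (the diagonal is Borel in $E$), and $\dom(\phi_\Delta) = \rng(\phi_\Delta) = \alpha$. Symmetry follows by transposition: if $\phi : \alpha \sim_E \beta$, then $\phi^{\mathrm{op}}(x, y) := \phi(y, x)$ satisfies $\phi^{\mathrm{op}} : \beta \sim_E \alpha$.

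The main content is transitivity, which amounts to composing refinements in a Borel way. Given $\phi : \alpha \sim_E \beta$ and $\psi : \beta \sim_E \gamma$, for each $y \in X$ we have two decompositions $\beta(y) = \sum_{x \mathrel{E} y} \phi(x, y) = \sum_{z \mathrel{E} y} \psi(y, z)$, and we must refine them simultaneously into a single $[y]_E \times [y]_E$ matrix of non-negative values, uniformly Borel in $y$. Fix a countable Borel group action $\Gamma = \{\gamma_0, \gamma_1, \dotsc\} \curvearrowright X$ inducing $E$, and for each $y$ let $I_y := \{i \in \#N \mid \forall j < i\,(\gamma_j \cdot y \ne \gamma_i \cdot y)\}$, so that $i |-> \gamma_i \cdot y$ is a Borel bijection $I_y -> [y]_E$ with Borel inverse $\iota_y : [y]_E -> I_y$. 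Define Borel sequences $u_y, v_y \in \-{\#R^+}^\#N$ by $u_y(i) := \phi(\gamma_i \cdot y, y)$ for $i \in I_y$ and $0$ otherwise, and $v_y(j) := \psi(y, \gamma_j \cdot y)$ for $j \in I_y$ and $0$ otherwise. Then $\sum_i u_y(i) = \beta(y) = \sum_j v_y(j)$, so \cref{lm:r-refine-borel} produces a jointly Borel-in-$y$ refinement $d(u_y, v_y) \in \-{\#R^+}^{\#N^2}$.

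Setting $\theta(x, z) := \sum_{y \mathrel{E} x} d(u_y, v_y)(\iota_y(x), \iota_y(z))$ for $(x, z) \in E$ gives a Borel map $E -> \-{\#R^+}$ (the sum is countable since $E$-classes are). Note that entries with $i \notin I_y$ contribute nothing, since then $u_y(i) = 0$ forces $d(u_y, v_y)(i, j) = 0$ for all $j$. A swap of $\-{\#R^+}$-valued sums then yields
\[
\sum_{z \mathrel{E} x} \theta(x, z) = \sum_{y \mathrel{E} x} \sum_j d(u_y, v_y)(\iota_y(x), j) = \sum_{y \mathrel{E} x} u_y(\iota_y(x)) = \sum_{y \mathrel{E} x} \phi(x, y) = \alpha(x),
\]
and dually $\sum_{x \mathrel{E} z} \theta(x, z) = \gamma(z)$, so $\theta : \alpha \sim_E \gamma$. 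The main obstacle is really just this Borel bookkeeping: transitivity of equidecomposition in the abstract cardinal algebra $\-{\#R^+}$ is immediate, but here one must carry out infinitely many refinements uniformly in $y$, which is exactly the content of \cref{lm:r-refine-borel}; the enumeration of $[y]_E$ via $I_y$ is the standard device for pulling Borel countable sums on $E$-classes back to Borel sums over $\#N$.
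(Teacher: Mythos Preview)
Your proof is correct and follows essentially the same approach as the paper's: reflexivity and symmetry are handled identically, and transitivity is obtained by applying \cref{lm:r-refine-borel} at each intermediate point $y$ to refine the two decompositions of $\beta(y)$, then summing over $y$. The only cosmetic difference is that the paper uses an abstract Borel injective enumeration $(e^y_i)_{i\in\#N}$ of $[y]_E$ (avoiding the need for zero-padding outside $I_y$), whereas you realize this concretely via the group action; the resulting formula for $\theta$ and the verification that $\dom(\theta)=\alpha$, $\rng(\theta)=\gamma$ are otherwise the same.
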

\begin{proof}
Reflexivity is easy: for $\alpha \in \@C(X)$, we have $\phi : \alpha \sim_E \alpha$ where $\phi(x, x) := \alpha(x)$ and $\phi(x, y) := 0$ for $x \ne y$.  Symmetry is obvious.  For transitivity, let $\alpha, \beta, \gamma \in \@C(X)$ with $\phi : \alpha \sim_E \beta$ and $\psi : \beta \sim_E \gamma$.  Let $(e^x_i)_{i \in \#N}$ for each $x \in X$ be an injective enumeration of $[x]_E$, Borel in $x$.  For $x \mathrel{E} y$, put
\begin{align*}
i^x_y := \text{the unique $i$ such that $e^x_i = y$};
\end{align*}
then clearly $(x, y) |-> i^x_y$ is a Borel map $E -> \#N$.  Define $\theta : E -> \-{\#R^+}$ by
\begin{align*}
\theta(x, z) := \sum_{y \mathrel{E} x} d((\phi(e^y_i, y))_i, (\psi(y, e^y_j))_j)(i^y_x, i^y_z)
\end{align*}
where $d$ is given by \cref{lm:r-refine-borel}.  Then
\begin{align*}
\dom(\theta)(x)
&= \sum_{z \mathrel{E} x} \sum_{y \mathrel{E} x} d((\phi(e^y_i, y))_i, (\psi(y, e^y_j))_j)(i^y_x, i^y_z) \\
&= \sum_{y \mathrel{E} x} \sum_{z \mathrel{E} y} d((\phi(e^y_i, y))_i, (\psi(y, e^y_j))_j)(i^y_x, i^y_z) \\
&= \sum_{y \mathrel{E} x} \sum_k d((\phi(e^y_i, y))_i, (\psi(y, e^y_j))_j)(i^y_x, k) \\
&= \sum_{y \mathrel{E} x} \phi(e^y_{i^y_x}, y) \\
&= \sum_{y \mathrel{E} x} \phi(x, y) \\
&= \alpha(x),
\end{align*}
and similarly $\rng(\theta) = \gamma$.  So $\theta : \alpha \sim_E \gamma$.
\end{proof}

We define
\begin{align*}
\@L(E) := \@C(X)/{\sim_E}.
\end{align*}
We equip $\@C(X)$ with the pointwise countable addition operation, with respect to which $\sim_E$ is a congruence relation (since if $\phi_i : \alpha_i \sim_E \beta_i$ for each $i$ then $\sum_i \phi_i : \sum_i \alpha_i \sim_E \sum_i \beta_i$).  Thus, countable addition on $\@C(X)$ descends to the quotient algebra $\@L(E)$.

\begin{lemma}
\label{thm:c-ca}
$\@C(X)$ is a cardinal algebra.
\end{lemma}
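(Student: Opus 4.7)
The plan is to verify axioms \ref{ca:assoc}--\ref{ca:remainder} from \cref{sec:ca} for $\@C(X)$ equipped with pointwise countable addition. Since all operations are pointwise and $\-{\#R^+}$ is itself a cardinal algebra (indeed, the motivating example of one), the universally quantified equational axioms \ref{ca:assoc} and \ref{ca:dist} follow immediately: each equation holds at every point $x \in X$, and countable pointwise sums of Borel maps $X \to \-{\#R^+}$ are again Borel. The work is therefore concentrated in the two existential axioms, \ref{ca:refinement} and \ref{ca:remainder}, where we must produce \emph{Borel} witnesses rather than just pointwise ones.

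For axiom \ref{ca:refinement}, suppose $\alpha, \beta, \gamma_0, \gamma_1, \dotsc \in \@C(X)$ with $\alpha + \beta = \sum_i \gamma_i$. I would apply \cref{lm:r-refine-borel} pointwise: set $u(x) := (\alpha(x), \beta(x), 0, 0, \dotsc)$ and $v(x) := (\gamma_0(x), \gamma_1(x), \dotsc)$, both Borel in $x$, with $\sum_i u(x)(i) = \sum_j v(x)(j)$ everywhere. Then $x \mapsto d(u(x), v(x))$ is a Borel map $X \to \-{\#R^+}^{\#N^2}$, and setting $\alpha_j(x) := d(u(x), v(x))(0, j)$ and $\beta_j(x) := d(u(x), v(x))(1, j)$ gives Borel maps with $\sum_j \alpha_j = \alpha$, $\sum_j \beta_j = \beta$, and (using that $d(u(x), v(x))(i, j) = 0$ for $i \ge 2$ since $u(x)(i) = 0$ there) $\alpha_j + \beta_j = \gamma_j$.

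For axiom \ref{ca:remainder}, suppose $(\alpha_i)_i, (\beta_i)_i \in \@C(X)$ with $\alpha_i = \beta_i + \alpha_{i+1}$. Then pointwise $\alpha_0(x) \ge \alpha_1(x) \ge \dotsb$ in $\-{\#R^+}$, so define $\gamma(x) := \inf_n \alpha_n(x)$, which is Borel as a countable infimum of Borel functions. To verify $\alpha_i = \gamma + \sum_{j < \infty} \beta_{i+j}$ at a point $x$, iterate the hypothesis to get $\alpha_i(x) = \alpha_{i+n}(x) + \sum_{j<n} \beta_{i+j}(x)$ for all $n$; if $\gamma(x) < \infty$ then the tails $\alpha_{i+n}(x)$ decrease to $\gamma(x)$ and arithmetic in $\-{\#R^+}$ gives the equality in the limit, while if $\gamma(x) = \infty$ then $\alpha_i(x) = \infty$ as well and both sides equal $\infty$.

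The only real obstacle is the Borel measurability of the witnesses in \ref{ca:refinement}, and this has already been carried out in \cref{lm:r-refine-borel}; once that lemma is invoked, the rest is a routine unwinding of definitions. Axiom \ref{ca:remainder} is straightforward because the canonical pointwise witness (the infimum) is automatically Borel.
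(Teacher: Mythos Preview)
Your proof is correct and matches the paper's own argument essentially step for step: axioms \ref{ca:assoc} and \ref{ca:dist} are dismissed as pointwise, axiom \ref{ca:refinement} is handled by plugging $(\alpha(x),\beta(x),0,0,\dotsc)$ and $(\gamma_i(x))_i$ into the Borel refinement map $d$ of \cref{lm:r-refine-borel}, and axiom \ref{ca:remainder} uses the pointwise infimum $\gamma(x)=\bigwedge_i\alpha_i(x)$. Your observation that $d(u,v)(i,j)=0$ for $i\ge 2$ follows from $u(i)=0$ and nonnegativity is exactly the implicit step needed to conclude $\alpha_j+\beta_j=\gamma_j$.
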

\begin{proof}
Axioms (A) and (B) are obvious.  For (C), given $\alpha, \beta, \gamma_i \in \@C(X)$ with $\alpha + \beta = \sum_i \gamma_i$, let
\begin{align*}
\alpha_i(x) &:= d((\alpha(x), \beta(x), 0, 0, \dotsc), (\gamma_i(x))_i)(0, i), \\
\beta_i(x) &:= d((\alpha(x), \beta(x), 0, 0, \dotsc), (\gamma_i(x))_i)(1, i),
\end{align*}
where $d$ is given by \cref{lm:r-refine-borel}; then $\alpha = \sum_i \alpha_i$, $\beta = \sum_i \beta_i$, and $\alpha_i + \beta_i = \gamma_i$ by the defining properties of $d$.  For (D), given $\alpha_i, \beta_i \in \@C(X)$ with $\alpha_i = \beta_i + \alpha_{i+1}$, put
\begin{align*}
\gamma(x) := \bigwedge_i \alpha_i(x);
\end{align*}
it is easily verified that $\alpha_i = \gamma + \sum_j \beta_{i+j}$.
\end{proof}

\begin{proposition}
\label{thm:l-ca}
$\@L(E)$ is a cardinal algebra.
\end{proposition}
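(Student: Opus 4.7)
The plan is to leverage the fact that $\@C(X)$ is already known to be a cardinal algebra (\cref{thm:c-ca}) and that $\sim_E$ is a congruence for countable addition on $\@C(X)$, so that addition descends to $\@L(E)$. Axioms \cref{ca:assoc} and \cref{ca:dist}, being universally quantified equations, pass automatically from $\@C(X)$ to its quotient $\@L(E)$; hence the real content is in the existential axioms \cref{ca:refinement} and \cref{ca:remainder}, where the witnesses furnished in $\@C(X)$ are actual equalities, and so any hypothesis of the form $\gamma \sim_E \delta$ must first be lifted to an equality between well-chosen representatives before the axiom can be applied downstairs.

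The main technical step will be a refinement lemma for equidecompositions: given $\phi : \alpha \sim_E \beta$ in $\@C(X)$ together with a pointwise decomposition $\beta = \sum_i \beta_i$, one can produce $\alpha_i \in \@C(X)$ and equidecompositions $\phi_i : \alpha_i \sim_E \beta_i$ with $\phi = \sum_i \phi_i$ pointwise on $E$ (and hence $\alpha = \sum_i \alpha_i$). To prove this, fix a Borel enumeration $(e^y_k)_k$ of $[y]_E$ in $y$, exactly as in the proof of \cref{thm:l-trans}; at each $y$, the sequences $(\phi(e^y_k, y))_k$ and $(\beta_i(y))_i$ have common sum $\beta(y)$, so \cref{lm:r-refine-borel} produces a Borel double-indexed refinement, from which we define $\phi_i(x, y) := d((\phi(e^y_k, y))_k, (\beta_j(y))_j)(i^y_x, i)$ and $\alpha_i := \dom(\phi_i)$. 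The defining properties of $d$ then yield $\sum_i \phi_i = \phi$ and $\rng(\phi_i) = \beta_i$ (hence $\sum_i \alpha_i = \alpha$ by interchanging sums) directly.

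With this in hand the two existential axioms follow. For \cref{ca:refinement}: given $[\alpha] + [\beta] = \sum_i [\gamma_i]$ in $\@L(E)$, pick a representative $\phi : \alpha + \beta \sim_E \sum_i \gamma_i$, use the refinement lemma to produce $\gamma'_i \in \@C(X)$ with $\sum_i \gamma'_i = \alpha + \beta$ and $\gamma'_i \sim_E \gamma_i$, and then apply \cref{ca:refinement} in $\@C(X)$ to $\alpha, \beta, (\gamma'_i)_i$, passing to $\sim_E$-classes. For \cref{ca:remainder}: given $(\alpha_i), (\beta_i) \in \@C(X)$ representing the data, recursively build $\alpha'_i, \beta'_i \in \@C(X)$ with $\alpha'_i \sim_E \alpha_i$, $\beta'_i \sim_E \beta_i$, and \emph{pointwise} equality $\alpha'_i = \beta'_i + \alpha'_{i+1}$: set $\alpha'_0 := \alpha_0$, and given $\alpha'_i$, pick $\phi : \alpha'_i \sim_E \beta_i + \alpha_{i+1}$ and apply the refinement lemma to the two-term target decomposition to split $\alpha'_i$ accordingly; then apply \cref{ca:remainder} in $\@C(X)$ to the primed sequences and pass to $\sim_E$-classes to produce the desired remainder.

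The main obstacle is the Borel bookkeeping in the refinement lemma, where the enumeration $(e^y_k)_k$ is used simultaneously to index the transport plan and the first axis of the output of $d$; once this is arranged as in \cref{thm:l-trans}, everything else is formal, since the genuine combinatorial content of the cardinal-algebra axioms has already been packaged into \cref{thm:c-ca} and \cref{lm:r-refine-borel}.
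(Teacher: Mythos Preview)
Your argument is correct and rests on the same engine as the paper's: the Borel ``common refinement'' map $d$ of \cref{lm:r-refine-borel}, applied classwise via a Borel enumeration $(e^y_k)_k$ of $[y]_E$, to split an equidecomposition $\phi:\alpha\sim_E\beta$ along a decomposition of one side. The difference is in packaging. The paper only proves the two-summand case of your refinement lemma (i.e., that $\sim_E$ is \emph{finitely refining}: if $\alpha_1+\alpha_2\sim_E\beta$ then $\beta=\beta_1+\beta_2$ with $\alpha_k\sim_E\beta_k$) and then invokes Tarski's general theorem \cite[6.10]{Tar} that any quotient of a cardinal algebra by a finitely refining congruence is again a cardinal algebra; this absorbs your direct verification of \ref{ca:refinement} and your recursive straightening for \ref{ca:remainder} into a single citation. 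Your route is more self-contained and in fact proves the countably-indexed refinement lemma (not just the binary one), at the cost of doing by hand what Tarski's result does once and for all; the paper's route is shorter but relies on an external structural theorem.
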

\begin{proof}
By \cref{thm:c-ca} and \cite[6.10]{Tar}, it suffices to check that $\sim_E$ is a \defn{finitely refining} equivalence relation: that for $\alpha_1 + \alpha_2 = \alpha \sim_E \beta \in \@C(X)$, there are $\beta_1, \beta_2 \in \@C(X)$ such that $\beta = \beta_1 + \beta_2$, $\alpha_1 \sim_E \beta_1$, and $\alpha_2 \sim_E \beta_2$.  Let $\phi : \alpha \sim_E \beta$.  Let $(e^x_i)_{i \in \#N}$ for each $x \in X$ be an injective enumeration of $[x]_E$, Borel in $x$.  Define $\phi_1, \phi_2 : E -> \-{\#R^+}$ by
\begin{align*}
\phi_1(x, e^x_i) &:= d((\alpha_1(x), \alpha_2(x), 0, 0, \dotsc), (\phi(x, e^x_j))_j)(0, i), \\
\phi_2(x, e^x_i) &:= d((\alpha_1(x), \alpha_2(x), 0, 0, \dotsc), (\phi(x, e^x_j))_j)(1, i),
\end{align*}
where $d$ is given by \cref{lm:r-refine-borel}.  Then the definition of $d$ ensures that $\dom(\phi_1) = \alpha_1$, $\dom(\phi_2) = \alpha_2$, and $\phi_1 + \phi_2 = \phi$.  Put $\beta_1 := \rng(\phi_1)$ and $\beta_2 = \rng(\phi_2)$.
\end{proof}

For $\alpha \in \@C(X)$ and a (not necessarily $\sigma$-finite) $E$-invariant measure $\mu \in \INV^*_E$, put
\begin{align*}
\mu(\alpha) := \int \alpha \,d\mu.
\end{align*}

\begin{lemma}
\label{lm:eqdec-integ}
For $\alpha \sim_E \beta$, we have $\mu(\alpha) = \mu(\beta)$.
\end{lemma}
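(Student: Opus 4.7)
The plan is to reduce $\alpha \sim_E \beta$ to the statement that $\mu$, being invariant under every partial Borel bijection with graph in $E$, commutes with the ``mass transport'' encoded by $\phi$.

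First, I would apply Lusin--Novikov uniformization to $E \subseteq X^2$, which has countable vertical sections, to obtain a Borel partition $E = \bigsqcup_{n} \mathrm{graph}(f_n)$ where each $f_n : A_n -> B_n$ is a Borel bijection between Borel sets $A_n, B_n \subseteq X$ with $\mathrm{graph}(f_n) \subseteq E$. The $E$-invariance of $\mu$, in the form stated in the first bullet of \cref{sec:topmeasdecomp}, means that $(f_n)_*(\mu|A_n) = \mu|B_n$; by the change-of-variables formula applied to non-negative Borel integrands this upgrades to
\begin{align*}
\int_{A_n} g(x, f_n(x)) \,d\mu(x) = \int_{B_n} g(f_n^{-1}(y), y) \,d\mu(y)
\end{align*}
for every Borel $g : E -> \-{\#R^+}$.

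Second, I would unwind the definitions and swap sums and integrals using Tonelli (everything is non-negative). Picking a fixed $\phi : \alpha \sim_E \beta$, we have
\begin{align*}
\mu(\alpha) = \int \alpha(x) \,d\mu(x)
= \int \sum_{y \mathrel{E} x} \phi(x, y) \,d\mu(x)
= \sum_n \int_{A_n} \phi(x, f_n(x)) \,d\mu(x),
\end{align*}
where in the last step the sum over $[x]_E$ is reorganized along the partition $\{\mathrm{graph}(f_n)\}_n$. Applying the change-of-variables identity term-by-term and then reassembling gives
\begin{align*}
\mu(\alpha) = \sum_n \int_{B_n} \phi(f_n^{-1}(y), y) \,d\mu(y)
= \int \sum_{x \mathrel{E} y} \phi(x, y) \,d\mu(y)
= \int \beta(y) \,d\mu(y) = \mu(\beta).
\end{align*}

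There isn't really a serious obstacle here: the only delicate point is making sure the invariance of $\mu$ is applied in its functional rather than set-theoretic form, which requires noting that $\mu(A) = \mu(B)$ for all Borel $A \sim_E B$ forces $f_*(\mu|A) = \mu|B$ for every partial Borel bijection $f$ with graph in $E$ (by monotone class / simple-function approximation). Once that is in hand, the argument is just two applications of Tonelli sandwiching the change of variables along each $f_n$.
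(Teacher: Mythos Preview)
Your argument is correct and is essentially the paper's: both amount to computing $\int_E \phi$ in two ways via Tonelli, once summing over the second coordinate and once over the first, with $E$-invariance of $\mu$ bridging the two; the paper simply packages the intermediate quantity as a measure $M$ on $E$ with $M(A) = \int |A_x|\,d\mu(x) = \int |A^y|\,d\mu(y)$ (citing \cite[\S16]{KM}), while you prove that identity directly via change of variables along the pieces. One small correction: Lusin--Novikov alone only partitions $E$ into graphs of Borel \emph{functions}, not bijections; to get partial Borel bijections (which you need for $(f_n)_*(\mu|A_n)=\mu|B_n$) invoke Feldman--Moore and disjointify over the generating group action, or apply Lusin--Novikov a second time to each graph using that horizontal sections are also countable.
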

\begin{proof}
By invariance of $\mu$, we may define the measure $M$ on $E$ by
\begin{align*}
M(A) := \int \abs{A_x} \,d\mu(x) = \int \abs{A^y} \,d\mu(y)
\end{align*}
where $A_x := \{y \mid (x, y) \in A\}$ and $A^y := \{x \mid (x, y) \in A\}$; see e.g., \cite[\S16]{KM}.  Now letting $\phi : \alpha \sim_E \beta$, we have $\mu(\alpha) = \int \phi \,dM$.  Indeed, let $(e^x_i)_{i \in \#N}$ for each $x \in X$ be an injective enumeration of $[x]_E$, Borel in $x$, and let $E_i := \{(x, e^x_i) \mid x \in X\}$, so that $E = \bigsqcup_i E_i$; then
\begin{align*}
\mu(\alpha)
&= \int \sum_i \phi(x, e^x_i) \,d\mu(x) \\
&= \sum_i \int \phi(x, e^x_i) \,d\mu(x) \\
&= \sum_i \int_{(x, y) \in E_i} \phi(x, y) \,dM(x, y) \\
&= \int \phi(x, y) \,dM(x, y).
\end{align*}
Similarly, $\mu(\beta) = \int \phi \,dM$, whence $\mu(\alpha) = \mu(\beta)$.
\end{proof}

It follows that each $\mu \in \INV^*_E$ defines a map $\@L(E) -> \-{\#R^+}$, which is a $\sum$-homomorphism since integration is countably additive (by the monotone convergence theorem).  Thus, analogously to \cref{thm:k-homom-measure}, we have a map
\begin{align*}
\INV^*_E &`-> \{\text{$\sum$-homomorphisms } \@L(E) -> \-{\#R^+}\} \\
\mu &|--> (\~\alpha |-> \int \alpha \,d\mu)
\end{align*}
which is in fact a bijection (see \cref{thm:l}(iii) below).  We also have (analogously to \cref{thm:k-embed}) a $\sum$-homomorphism
\begin{align*}
\iota : \@L(E) &--> \-{\#R^+}^{\EINV^\sigma_E} \\
\~\alpha &|--> (\mu |-> \mu(\alpha));
\end{align*}
we will show below that it preserves finite meets (hence countable joins) and is an embedding.

We now begin the comparison between $\@K(E)$ and $\@L(E)$.  Given a Borel set $A \in \@B(X)$, its characteristic function $\chi_A$ belongs to $\@C(X)$; and if $A, B \in \@B(X)$ and $f : A \sim_E B$ is an equidecomposition, then the characteristic function of the graph of $f$ is an equidecomposition $\chi_A \sim_E \chi_B$.  Thus $A |-> \chi_A$ descends to a map between the quotients
\begin{align*}
\chi : \@K(E) &--> \@L(E) \\
\~A &|--> \~\chi_A
\end{align*}
which clearly preserves countable sums, i.e., is a $\sum$-homomorphism.

\begin{proposition}
\label{thm:chi-embed}
$\chi$ is an order-embedding.
\end{proposition}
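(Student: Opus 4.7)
My plan is to split the claim into two directions. Since $\chi$ is a $\sum$-homomorphism (as noted immediately before the proposition), it automatically preserves the canonical partial order of the cardinal algebras: in any cardinal algebra the order is defined by $a \le b \iff \exists c\,(a+c=b)$, so if $\~A \le \~B$ in $\@K(E)$ with $\~A + \~C = \~B$, then $\chi(\~A) + \chi(\~C) = \chi(\~B)$, giving $\chi(\~A) \le \chi(\~B)$ in $\@L(E)$. The content of the proposition is therefore reflection of the order: $\~{\chi_A} \le \~{\chi_B}$ in $\@L(E)$ should imply $A \preceq_E B$.

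I would prove this contrapositively, leveraging the separation result \cref{lm:k-embed} already established for $\@K(E)$. Suppose $\~A \not\le \~B$ in $\@K(E)$; then \cref{lm:k-embed} produces an ergodic $\sigma$-finite $E$-invariant measure $\mu \in \EINV^\sigma_E$ with $\mu(A) > \mu(B)$. By \cref{lm:eqdec-integ}, integration against $\mu$ respects the equivalence relation $\sim_E$ on $\@C(X)$, so $\~\alpha \mapsto \int \alpha\, d\mu$ is a well-defined map $\@L(E) \to \-{\#R^+}$, and it is a $\sum$-homomorphism by the monotone convergence theorem. Every $\sum$-homomorphism between cardinal algebras preserves the canonical order (same one-line argument as above). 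Hence $\~{\chi_A} \le \~{\chi_B}$ would force $\mu(A) = \int \chi_A\,d\mu \le \int \chi_B\,d\mu = \mu(B)$, contradicting $\mu(A) > \mu(B)$.

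I do not anticipate any real obstacle: the argument is essentially a one-line deduction from \cref{lm:k-embed} (which provides enough ergodic $\sigma$-finite measures to separate elements of $\@K(E)$) together with \cref{lm:eqdec-integ} (which makes such a measure descend to a $\sum$-homomorphism on $\@L(E)$). The only subtlety worth checking is that adding $\mu(\gamma)$ in $\-{\#R^+}$ cannot rescue the inequality even when $\mu(A) = \infty$: but $\mu(A) > \mu(B)$ in $\-{\#R^+}$ forces $\mu(B) < \infty$, making the cancellation $\mu(A) \le \mu(A) + \mu(\gamma) = \mu(B)$ a genuine contradiction.
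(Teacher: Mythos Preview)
Your argument is correct and is essentially the same as the paper's: the paper phrases it as the commutativity of the triangle $\eta = \iota \circ \chi$, where $\eta : \@K(E) \to \-{\#R^+}^{\EINV^\sigma_E}$ is the order-embedding from \cref{thm:k-embed} and $\iota : \@L(E) \to \-{\#R^+}^{\EINV^\sigma_E}$ is the (order-preserving) integration map, and then observes that a factorization of an order-embedding through an order-preserving map forces the first factor to be an order-embedding. Your version simply unpacks this, invoking \cref{lm:k-embed} and \cref{lm:eqdec-integ} directly rather than their packaged forms $\eta$ and $\iota$; the final paragraph's worry about cancellation is unnecessary, since you have already noted that any $\sum$-homomorphism is order-preserving.
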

(Note that this is not obvious: an equidecomposition $\chi_A \sim_E \chi_B$ need not be the characteristic function of the graph of an equidecomposition $A \sim_E B$.)
\begin{proof}
We have a commutative diagram
\begin{equation*}
\begin{tikzcd}
\@K(E) \drar["\eta"'] \rar["\chi"] & \@L(E) \dar["\iota"] \\
& \-{\#R^+}^{\EINV^\sigma_E}
\end{tikzcd}
\end{equation*}
where $\eta$ is from \cref{thm:k-embed} and $\iota$ is from above.  Since $\eta$ is an order-embedding and $\iota$ is order-preserving, it follows that $\chi$ is an order-embedding.
\end{proof}

For any $\alpha \in \@C(X)$, put
\begin{align*}
\sum_E \alpha : X/E &--> \-{\#R^+} \\
C &|--> \sum_{x \in C} \alpha(x).
\end{align*}
We say that $\~\alpha \in \@L(E)$ (or $\alpha \in \@C(X)$) is \defn{($E$-)finite} if $\alpha$ has finite sum on every $E$-class (i.e., $\sum_E \alpha : X/E -> [0, \infty)$), and \defn{($E$-)aperiodic} if $\alpha$ has sum $0$ or $\infty$ on every $E$-class (i.e., $\sum_E \alpha : X/E -> \{0, \infty\}$).  We let $\@L^\fin(E), \@L^\ap(E) \subseteq \@L(E)$ denote the subsets of finite, respectively aperiodic, elements.  Clearly $\chi(\@K^\fin(E)) \subseteq \@L^\fin(E)$ and $\chi(\@K^\ap(E)) \subseteq \@L^\ap(E)$.

\begin{lemma}
\label{lm:lfin}
Suppose $\alpha \in \@C(X)$ is $E$-finite.  Then $\alpha$ has $E$-smooth support, i.e., $E|\alpha^{-1}((0, \infty])$ is smooth.  Moreover, for any $\beta \in \@C(X)$ with $\sum_E \alpha \le \sum_E \beta$, we have $\~\alpha \le \~\beta$.
\end{lemma}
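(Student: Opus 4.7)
The plan is in two steps: first prove smoothness of the support, then use smoothness to build an explicit equidecomposition witnessing $\~\alpha \le \~\beta$. For smoothness, I would begin by noting that $E$-finiteness forces $\alpha(x) < \infty$ for every $x$, since $\alpha(x)$ is a summand of the finite class sum $(\sum_E \alpha)([x]_E)$. For each integer $n \ge 1$, the Borel set $A_n := \alpha^{-1}((1/n, \infty))$ intersects every $E$-class $C$ in at most $n \cdot (\sum_E \alpha)(C) < \infty$ points, so $E|A_n$ has finite classes and is therefore smooth. Since $\alpha^{-1}((0,\infty]) = \bigcup_n A_n$, smoothness of $E|\alpha^{-1}((0,\infty])$ follows from the standard fact that a countable union of $E$-smooth Borel sets is $E$-smooth, which can be seen by patching Borel transversals $T_n$ of $E|A_n$ into $T := \bigcup_n \{y \in T_n \mid [y]_E \cap A_k = \emptyset \text{ for all } k<n\}$, using countability of $E$ for Borelness of the class-emptiness condition.

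For the inequality, I would fix such a Borel transversal $T$ of $E|A$ where $A := \alpha^{-1}((0,\infty])$, and define $\alpha' \in \@C(X)$ supported on $T$ by $\alpha'(y) := (\sum_E \alpha)([y]_E)$ for $y \in T$; this is finite by $E$-finiteness. The Borel map sending $(x, y) \in E$ to $\alpha(x)$ when $y \in T$ and to $0$ otherwise is then an equidecomposition $\alpha \sim_E \alpha'$ (one easily checks $\dom = \alpha$, $\rng = \alpha'$). Next, using a Borel enumeration $(e^y_i)_i$ of each $[y]_E$ as in the proof of \cref{thm:l-trans}, I would recursively define
\begin{align*}
\phi(y, e^y_i) := \min(\beta(e^y_i),\, \alpha'(y) - \sum_{j<i} \phi(y, e^y_j))
\end{align*}
for $y \in T$, and $\phi(x, \cdot) := 0$ for $x \notin T$. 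Since $\alpha'(y) < \infty$ and $\sum_i \beta(e^y_i) = (\sum_E \beta)([y]_E) \ge \alpha'(y)$ by hypothesis, induction on $i$ gives $\sum_i \phi(y, e^y_i) = \alpha'(y)$, so $\dom(\phi) = \alpha'$; and by construction $\gamma := \rng(\phi) \le \beta$ pointwise. Hence $\phi : \alpha' \sim_E \gamma$, so $\~\alpha = \~{\alpha'} = \~\gamma \le \~\gamma + \~{\beta - \gamma} = \~\beta$ in $\@L(E)$.

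The main obstacle is the smoothness part: it is not a priori obvious that bounding the class sums of $\alpha$ controls the class-wise support structure, so the layering $A_n = \alpha^{-1}((1/n,\infty))$ is the key idea. Once smoothness is established, the rest is essentially forced — all of $\alpha$'s class mass gets concentrated at the transversal point, and the recursive min-formula Borelly redistributes it inside $\beta$.
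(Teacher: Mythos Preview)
Your proof is correct and follows essentially the same approach as the paper: the smoothness argument via the layers $\alpha^{-1}((1/n,\infty))$ is identical, and the reduction of $\alpha$ to a transversal-supported $\alpha'$ matches the paper's. The only minor difference is in the final step: the paper restricts $\beta$ to $[\alpha^{-1}((0,\infty])]_E$ (which also has smooth support) and collapses \emph{both} sides to single-point-per-class functions before comparing, whereas you leave $\beta$ untouched and greedily distribute $\alpha'(y)$ into $\beta$ along a Borel enumeration of $[y]_E$; both are straightforward once $\alpha'$ is in hand.
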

\begin{proof}
If $\alpha(x) > 0$, then since $\sum_{y \mathrel{E} x} \alpha(y) < \infty$, the set $\{y \mathrel{E} x \mid \alpha(y) > 1/n\}$ is finite for each $n$ and nonempty for some $n$; this easily implies that $\alpha$ has smooth support.

For any $\alpha \in \@C(X)$ with smooth support (not necessarily $E$-finite), letting $A \subseteq \alpha^{-1}((0, \infty])$ be a Borel transversal of $E|\alpha^{-1}((0, \infty])$, it is easily seen that $\alpha \sim_E \alpha'$, where $\alpha'(x) := \sum_{y \mathrel{E} x} \alpha(y)$ for $x \in A$ and $\alpha'(x) := 0$ for $x \not\in A$, so that $\alpha'$ is nonzero on at most one point per $E$-class.  Now if $\sum_E \alpha \le \sum_E \beta$, then $\gamma := \beta|[\alpha^{-1}((0, \infty])]_E$ also has smooth support, and $\sum_E \alpha \le \sum_E \gamma$; letting $\gamma' \sim_E \gamma$ be nonzero on at most one point per $E$-class, we have $\sum_E \alpha' = \sum_E \alpha \le \sum_E \gamma = \sum_E \gamma'$, which easily implies $\~\alpha' \le \~\gamma'$, whence $\~\alpha = \~\alpha' \le \~\gamma' = \~\gamma \le \~\beta$.
\end{proof}

\begin{proposition}
\label{thm:lfin}
We have an order-isomorphism
\begin{align*}
\sum_E : \@L^\fin(E) &\cong \{\text{Borel maps $X/E -> [0, \infty)$ with smooth support}\}
\end{align*}
(where $f : X/E -> [0, \infty)$ having smooth support means that $E|\bigcup f^{-1}((0, \infty))$ is smooth).
\end{proposition}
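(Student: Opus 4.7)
The plan is to check well-definedness, then verify each of the four properties (order-preserving, order-reflecting, hence injective, and surjective) using the machinery already built, with the heavy lifting already done in \cref{lm:lfin}.

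First, to see $\sum_E$ is well-defined on $\sim_E$-classes: if $\phi : \alpha \sim_E \beta$, then for any $E$-class $C$, Fubini/Tonelli for the counting measure on $C \times C$ gives
\[ \sum_{x \in C} \alpha(x) = \sum_{x \in C} \sum_{y \in C} \phi(x,y) = \sum_{y \in C} \beta(y), \]
so $\sum_E \alpha = \sum_E \beta$. To see the image lies in the stated codomain: if $\tilde\alpha$ is $E$-finite then $\sum_E \alpha$ is $[0,\infty)$-valued by definition, and $\alpha$ has $E$-smooth support by the first sentence of \cref{lm:lfin}, hence so does $\sum_E \alpha$ (the class-support of $\sum_E\alpha$ is the $E$-saturation of the pointwise support of $\alpha$).

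Next, that $\sum_E$ is order-preserving is immediate: if $\tilde\alpha \le \tilde\beta$ in $\@L^\fin(E)$, pick $\gamma \in \@C(X)$ with $\alpha + \gamma \sim_E \beta$; then $\sum_E\alpha + \sum_E\gamma = \sum_E\beta$ pointwise, so $\sum_E\alpha \le \sum_E\beta$. Conversely, the second half of \cref{lm:lfin} says exactly that $\sum_E\alpha \le \sum_E\beta$ implies $\tilde\alpha \le \tilde\beta$; so the map is order-reflecting, hence in particular injective.

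It remains to establish surjectivity. Given a Borel $f : X/E \to [0,\infty)$ with smooth support, let $Y := \bigcup f^{-1}((0,\infty)) \subseteq X$, an $E$-invariant Borel set on which $E|Y$ is smooth. Choose a Borel transversal $T \subseteq Y$ for $E|Y$, and define
\[ \alpha(x) := \begin{cases} f([x]_E) & \text{if } x \in T, \\ 0 & \text{otherwise.} \end{cases} \]
Then $\alpha \in \@C(X)$ is $E$-finite (every $E$-class meets $T$ in at most one point, where $\alpha$ takes a finite value), and by construction $\sum_E\alpha = f$. This shows that $\sum_E$ hits every element of the codomain, completing the proof that it is an order-isomorphism.

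The only mildly subtle point is surjectivity, which hinges on the existence of a Borel transversal for the smooth part — a standard fact — and on observing that we only need $\alpha$ to be supported on one point per $E$-class where $f > 0$. All other steps are unpacking of definitions combined with a direct appeal to \cref{lm:lfin}, so I do not anticipate a real obstacle.
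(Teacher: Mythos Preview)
Your proof is correct and follows essentially the same approach as the paper's: both invoke \cref{lm:lfin} to get the order-embedding property (with you additionally spelling out well-definedness and the order-preserving direction, which the paper leaves implicit), and both construct the preimage of a given $f$ by choosing a Borel transversal of the smooth support and placing the value $f([x]_E)$ on the transversal point. The only difference is the level of detail, not the substance.
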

\begin{proof}
By \cref{lm:lfin}, $\sum_E$ is an order-embedding.
For surjectivity, given Borel $f : X/E -> [0, \infty)$ with smooth support, letting $A$ be a Borel transversal of $E|\bigcup f^{-1}((0, \infty))$, we have $f = \sum_E \alpha$ where $\alpha(x) := f([x]_E)$ for $x \in A$ and $\alpha(x) := 0$ for $x \not\in A$.
\end{proof}

\begin{lemma}
\label{thm:l-csec}
For every $\alpha \in \@C(X)$ and $E$-complete section $Y \subseteq X$, there is a $\beta \in \@C(X)$ supported on $Y$ such that $\alpha \sim_E \beta$.
\end{lemma}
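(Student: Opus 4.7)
The plan is to transport all of $\alpha$'s mass onto $Y$ via a Borel ``projection'' $\pi : X \to Y$ respecting $E$-classes, and then read off $\beta$ and the witnessing equidecomposition from $\pi$.

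First, I would use the Lusin--Novikov uniformization theorem on the Borel set
\[
R := \{(x, y) \in E \mid y \in Y\} \subseteq X \times X,
\]
whose fibers $R_x = [x]_E \cap Y$ are countable and nonempty (the latter since $Y$ is an $E$-complete section). This yields a Borel function $\pi : X \to Y$ with $\pi(x) \mathrel{E} x$ for all $x$, and moreover $\pi|Y = \mathrm{id}_Y$ if we wish (by uniformizing on $Y$ first and then extending).

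Next, I would define
\[
\phi : E \to \overline{\mathbb{R}^+}, \qquad \phi(x, y) := \begin{cases} \alpha(x) & \text{if } y = \pi(x), \\ 0 & \text{otherwise,} \end{cases}
\]
and set $\beta := \rng(\phi)$, which concretely is $\beta(y) := \sum_{x \in \pi^{-1}(y)} \alpha(x)$ for $y \in Y$ and $\beta(y) := 0$ for $y \notin Y$. Borelness of $\beta$ follows by fixing a Borel injective enumeration $(e^y_i)_i$ of $[y]_E$ and writing $\beta(y) = \sum_i \alpha(e^y_i) \cdot \llbracket \pi(e^y_i) = y \rrbracket$. Clearly $\dom(\phi)(x) = \phi(x, \pi(x)) = \alpha(x)$, so $\phi : \alpha \sim_E \beta$ witnesses the required equidecomposition, and $\beta$ is supported on $Y$ by construction.

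The only real content is the Lusin--Novikov selection of $\pi$; once this is in hand, verifying that $\phi$ is a Borel map $E \to \overline{\mathbb{R}^+}$ with the correct marginals is immediate from the definition.
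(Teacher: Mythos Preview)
Your argument is correct and essentially identical to the paper's own proof: the paper also picks a Borel $f : X \to Y$ with $f(x) \mathrel{E} x$, defines $\beta(y) := \sum_{x \in f^{-1}(y)} \alpha(x)$ on $Y$ (and $0$ off $Y$), and takes $\phi(x, f(x)) := \alpha(x)$ and $\phi(x,y) := 0$ otherwise. The only difference is that you spell out the Lusin--Novikov justification for the existence of $\pi$ and the Borelness of $\beta$, which the paper leaves implicit.
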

\begin{proof}
Let $f : X -> Y$ be Borel with $f(x) \mathrel{E} x$.  Put $\beta(y) := \sum_{x \in f^{-1}(y)} \alpha(x)$ for $y \in Y$ and $\beta(x) = 0$ for $x \not\in Y$.  Put $\phi(x, f(x)) := \alpha(x)$ and $\phi(x, y) := 0$ for $y \in [x]_E \setminus \{f(x)\}$.  Then $\phi : \alpha \sim_E \beta$.
\end{proof}

\begin{proposition}
\label{thm:chi-aperiodic}
$\chi : \@K^\ap(E) -> \@L^\ap(E)$ is an isomorphism.
\end{proposition}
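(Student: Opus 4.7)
By \cref{thm:chi-embed}, $\chi$ is an order-embedding and hence injective, and it is a $\sum$-homomorphism by construction, so only surjectivity needs to be established. Given $\~\alpha \in \@L^\ap(E)$ with representative $\alpha \in \@C(X)$, I would set $S := [\{x : \alpha(x) > 0\}]_E$, an $E$-invariant Borel set on which $\sum_E \alpha = \infty$ (while $\alpha \equiv 0$ off $S$). Since compressibility of $E$ descends to the $E$-invariant subset $S$ (restrict the compression function to $S$), $E|S$ is aperiodic, so $\chi_S$ is an $E$-aperiodic element of $\@C(X)$, with $\~S \in \@K^\ap(E)$ and $\chi(\~S) = \~{\chi_S}$. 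It then suffices to show $\chi_S \sim_E \alpha$; as both functions vanish off the $E$-invariant set $S$, this reduces to the case $(X, E) = (S, E|S)$: $E$ compressible with $\sum_E \alpha \equiv \infty$ on every class, goal $\alpha \sim_E \chi_X$.

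The central step is to prove the following key lemma, extending the second half of \cref{lm:lfin} beyond the $E$-finite setting: \emph{if $\alpha, \beta \in \@C(X)$ satisfy $\sum_E \alpha \le \sum_E \beta$ on every $E$-class, then $\~\alpha \le \~\beta$ in $\@L(E)$.} Applied in both directions to $\alpha$ and $\chi_X$---both with $\sum_E \equiv \infty$---this yields $\~\alpha \le \~{\chi_X}$ and $\~{\chi_X} \le \~\alpha$, hence $\~\alpha = \~{\chi_X}$ by antisymmetry of the cardinal-algebra order. Setting $A := S$ will then complete surjectivity via $\chi(\~A) = \~\alpha$. The key lemma in turn reduces to constructing a Borel $\phi : E \to \-{\#R^+}$ with $\dom(\phi) = \alpha$ and $\rng(\phi) \le \beta$ pointwise; class-wise, this is immediate from \cref{lm:r-refine-borel} applied to the marginals $(\alpha(e^x_i))_i$ and a Borel sequence $(v_j)_j$ with $v_j \le \beta(e^x_j)$ and $\sum_j v_j = \sum_i \alpha(e^x_i)$ (which exists because $\sum_C \beta \ge \sum_C \alpha$ on each $E$-class $C$).

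The main obstacle is realizing $\phi$ in a Borel fashion across $x$, since non-smoothness of $E$ precludes a canonical per-class enumeration. I would handle this in the spirit of the transitivity construction in \cref{thm:l-trans}: express $\phi(x, z)$ as a sum over intermediate points $y \in [x]_E$ of a term built from $y$'s own Borel enumeration via the refinement map $d$, so that upon interchanging the double sums the conditions $\dom(\phi) = \alpha$ and $\rng(\phi) \le \beta$ collapse to per-point identities independent of any choice of base point, while $\phi$ itself remains manifestly Borel.
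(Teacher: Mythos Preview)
Your ``key lemma'' is false, and this breaks the entire strategy. The invariant $\sum_E \alpha$ is far too coarse once the sums are infinite: two aperiodic elements can have $\sum_E \alpha = \sum_E \beta = \infty$ on every class while $\~\alpha \ne \~\beta$. Concretely, take $(X,E) = (2^\#N \times \#N,\, E_0 \times I_\#N)$ (compressible) and $A := 2^\#N \times \{0\}$. Then $E|A \cong E_0$ is aperiodic, so $\sum_E \chi_A = \infty$ on every class, and of course $\sum_E \chi_X = \infty$ as well. But $\~A \ne \~X$ in $\@K(E)$: the Bernoulli measure on $A$ extends to an $E$-invariant $\sigma$-finite measure giving $A$ measure $1$ and $X$ measure $\infty$, so $\chi(\~A) \ne \chi(\~X)$ in $\@L(E)$ by \cref{thm:chi-embed}. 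Thus your proposed conclusion $\~\alpha = \~{\chi_S}$ (with $S$ the saturation of the support) is simply wrong in general; in this example $\alpha = \chi_A$ has $S = X$ but $\~{\chi_A} \ne \~{\chi_X}$. Lemma~\ref{lm:lfin} genuinely uses smoothness of the support of an $E$-finite $\alpha$, and does not extend.

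The correct Borel set $A$ with $\chi_A \sim_E \alpha$ must therefore reflect the ``density'' of $\alpha$, not just its support. The paper's proof does exactly this: first reduce (via compressibility and binary expansion) to $\alpha$ taking values in $\{0\} \cup \{2^{-n} : n \in \#N\}$; dispose of a smooth part; then on each level set $\alpha^{-1}(2^{-n})$ (now $E$-aperiodic), invoke \cref{lm:aperiodic-fse} to find a finite Borel subequivalence relation with classes of size $2^n$ and take a transversal $A_n$, so that $\chi_{A_n} \sim_E 2^{-n}\chi_{\alpha^{-1}(2^{-n})}$. Setting $A := \bigcup_n A_n$ gives $\chi_A \sim_E \alpha$. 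Note how $A$ is thinner where $\alpha$ is smaller; this is precisely what your approach misses.
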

\begin{proof}
By \cref{thm:chi-embed}, it remains to show surjectivity.  Let $\alpha \in \@C(X)$ with sum $0$ or $\infty$ on each $E$-class; we must find an $A \in \@B(X)$ such that $\alpha \sim_E \chi_A$.

First, we claim that we may assume that $\alpha$ only takes values in $\{0\} \cup \{2^{-n} \mid n \in \#N\}$.  By compressibility of $E$, we may assume that $(X, E) = (Y \times 2 \times \#N, F \times I_2 \times I_\#N)$ for some $(Y, F)$ (where $I_2, I_\#N$ are the indiscrete equivalence relations $2 \times 2, \#N \times \#N$).  By \cref{thm:l-csec}, we may assume that $\alpha$ is supported on $Y \times \{0\} \times \{0\}$.  Now for each $y \in Y$, ``spread out'' $\alpha(y, 0, 0)$ according to its binary expansion along $\{y\} \times 2 \times \#N$ to get $\beta \in \@C(X)$.  That is, if $\alpha(y, 0, 0) = \infty$ then put $\beta(y, i, j) := 1$ for all $i, j$; otherwise, let $\alpha(y, 0, 0) = a.b_1b_2b_3\dotsm$ be the binary expansion, put $\beta(y, 0, i) := 1$ for $i < a$ and $\beta(y, 0, i) := 0$ for $i \ge a$, and put $\beta(y, 1, 0) := 0$ and $\beta(y, 1, i) := b_i 2^{-i}$ for $i > 0$.  Then clearly $\alpha \sim_E \beta$ and $\beta$ only takes values in $\{0\} \cup \{2^{-n} \mid n \in \#N\}$, so we may replace $\alpha$ by $\beta$.

Now, the union $A \subseteq X$ of those $E$-classes $C$ such that $C \cap \alpha^{-1}(2^{-n})$ is nonempty finite for some $n$ is clearly smooth, and so we easily have $\chi_A \sim_E \alpha|A$ (e.g., because $\chi_A \sim_E \beta \sim_E \alpha|A$ where $\beta$ is $\infty$ on a single point in each $E|A$-class).  So we may assume that for each $n$, $E|\alpha^{-1}(2^{-n})$ is aperiodic.  For each $n$, using \cref{lm:aperiodic-fse}, let $F_n$ be a finite Borel subequivalence relation of $E|\alpha^{-1}(2^{-n})$ with all classes of size $2^n$, and let $A_n \subseteq \alpha^{-1}(2^{-n})$ be a Borel transversal of $F_n$.  Then it is easily seen that $\chi_{A_n} \sim_E \alpha|\alpha^{-1}(2^{-n})$, whence putting $A := \bigcup_n A_n$, we have $\chi_A \sim_E \alpha$.
\end{proof}

Using \cref{thm:lfin,thm:chi-aperiodic}, we now transfer most of the properties of $\@K(E)$ to $\@L(E)$, yielding the analogue of \cref{thm:k} for $\@L(E)$:

\begin{theorem}
\label{thm:l}
\begin{enumerate}
\item[(i)]  $\@L(E)$ is a cardinal algebra with finite meets, countable joins, and real multiples of all elements.
\item[(ii)]  The embedding $\chi : \@K(E) `-> \@L(E)$ preserves finite meets and countable joins, and restricts to an isomorphism $\@K^\ap(E) \cong \@L^\ap(E)$.  Furthermore, the closure of the image of $\chi$ under real multiples and countable sums is all of $\@L(E)$.
\item[(iii)]  We have canonical bijections (where $\bigvee^\up$ denotes countable increasing joins)
\begin{align*}
\INV^*_E &\cong \{\text{$\sum$-homomorphisms } \@L(E) -> \-{\#R^+}\} \\
&= \{\text{$(\sum, \bigvee^\up, \-{\#R^+})$-homomorphisms } \@L(E) -> \-{\#R^+}\}, \\
\EINV^*_E &\cong \{\text{$(\sum, \wedge, \bigvee, \-{\#R^+})$-homomorphisms } \@L(E) -> \-{\#R^+}\}
\end{align*}
compatible with those for $\@K(E)$ from \cref{thm:k}.
\item[(iv)]  We have an $(\sum, \wedge, \bigvee, \-{\#R^+})$-embedding
\begin{align*}
\iota : \@L(E) &--> \-{\#R^+}^{\EINV^\sigma_E} \\
\~\alpha &|--> (\mu |-> \mu(\alpha))
\end{align*}
extending $\eta : \@K(E) -> \-{\#R^+}^{\EINV^\sigma_E}$.  In particular, $\@L(E)$ obeys all Horn axioms in the operations $(\sum, \wedge, \bigvee, \-{\#R^+})$ that hold in $\-{\#R^+}$.
\end{enumerate}
\end{theorem}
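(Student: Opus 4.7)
The plan is to bootstrap each of (i)--(iv) from the corresponding statement in \cref{thm:k}, using the two bridges already established: the order-embedding $\chi : \@K(E) `-> \@L(E)$, which restricts to an isomorphism $\@K^\ap(E) \cong \@L^\ap(E)$ (\cref{thm:chi-embed,thm:chi-aperiodic}), and the identification of $\@L^\fin(E)$ with smooth-supported Borel maps $X/E \to [0,\infty)$ (\cref{thm:lfin}).  The crucial decomposition is that any $\~\alpha \in \@L(E)$ splits, along the $E$-invariant Borel set where $\sum_E \alpha$ is finite per class, into an $E$-finite part in $\@L^\fin(E)$ and an $E$-aperiodic part in $\@L^\ap(E)$.

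For (i), the cardinal-algebra structure is \cref{thm:l-ca}.  Real multiples of all elements exist because the pointwise quotient $\alpha/n \in \@C(X)$ satisfies $n \cdot \widetilde{\alpha/n} = \~\alpha$, so every element is completely divisible and \S\ref{sec:ca}\ref{ca:rmult} applies.  For the binary meet of $\~\alpha, \~\beta$, I would partition $X$ into four $E$-invariant Borel pieces $X_{ff}, X_{fi}, X_{if}, X_{ii}$ according to whether each of $\sum_E \alpha, \sum_E \beta$ is finite or infinite per class: on $X_{fi}$ and $X_{if}$, \cref{lm:lfin} forces comparability so the meet is the smaller factor; on $X_{ff}$, \cref{thm:lfin} gives the meet as the pointwise minimum of the per-class sums; on $X_{ii}$, the meet is transported from $\@K^\ap(E|X_{ii})$ through \cref{thm:chi-aperiodic,thm:k-meetjoin}.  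Countable joins then follow by \S\ref{sec:ca}\ref{ca:sfiltjoin}\ref{ca:meet}.  For (ii), preservation of binary meets by $\chi$ is verified by applying the same four-piece recipe to $\chi_A, \chi_B$ and comparing with the $\@K(E)$ formula $(A \cap Y) \cup (B \cap Z)$ from \cref{lm:compare}; the two produce $\sim_E$-equivalent representatives.  Preservation of countable joins then follows via \S\ref{sec:ca}\ref{ca:sfiltjoin}\ref{ca:meet}.  For the density assertion, I would write each $\alpha \in \@C(X)$ via a binary expansion $\alpha = \infty \cdot \chi_{\alpha^{-1}(\infty)} + \sum_n 2^n \chi_{A_n}$ for Borel sets $A_n$ encoding the binary digits, exhibiting $\~\alpha$ as a countable sum of real multiples of images under $\chi$.

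For (iii), the integration map $\~\alpha |-> \int \alpha\,d\mu$ is a well-defined $\sum$-homomorphism by \cref{lm:eqdec-integ} and monotone convergence, extending the measure homomorphism on $\@K(E)$ from \cref{thm:k-homom-measure}; conversely, any $\sum$-homomorphism $f : \@L(E) -> \-{\#R^+}$ is determined by its restriction along $\chi$, via the density from (ii) together with \cref{lm:ca-homom-sfiltjoin-rmult}, giving the bijection with $\INV^*_E$.  For the ergodic version, one direction uses that $\chi$ preserves meets to transfer the condition back to \cref{lm:k-meas-ergodic}; the other uses that an ergodic $\mu$ must concentrate on a single one of the four pieces, where meet preservation reduces either to \cref{lm:k-meas-ergodic} (aperiodic piece) or to $\mu$-a.e.\ constancy of the $E$-invariant functions $\sum_E \alpha, \sum_E \beta$ (finite piece).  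For (iv), $\iota$ is a $(\sum, \wedge, \bigvee, \-{\#R^+})$-homomorphism coordinatewise by (iii).  To see that $\iota$ is an order-embedding, I would prove the analogue of \cref{lm:k-embed} for $\@L(E)$: given $\~\alpha \not\le \~\beta$, localize to a piece of the four-way decomposition where the inequality fails, then either transport through \cref{thm:chi-aperiodic} and invoke \cref{lm:k-embed}, or in the finite piece separate directly using an ergodic $\sigma$-finite measure on the smooth support.  The Horn-axiom consequence then follows as in \cref{thm:k}.

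The main obstacle is item (ii): aligning the meet formula in $\@K(E)$ (produced by the Becker--Kechris dichotomy \cref{lm:compare}) with the four-piece construction in $\@L(E)$ requires careful case analysis, particularly on the mixed pieces $X_{fi}, X_{if}$ where one characteristic function properly dominates the other and the two algorithms select different $\sim_E$-representatives of the meet.
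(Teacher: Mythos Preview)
Your proposal is correct and follows essentially the same route as the paper.  The four-piece decomposition of $X$ according to whether each of $\sum_E\alpha,\sum_E\beta$ is finite or infinite per class is exactly the paper's computation of binary meets in (i); the paper likewise handles $X_{ff}$ via \cref{thm:lfin}, the mixed pieces $X_{fi},X_{if}$ via \cref{lm:lfin}, and $X_{ii}$ via \cref{thm:chi-aperiodic}, then sums the four partial meets.  Your density argument (binary expansion), your treatment of (iii) via restriction along $\chi$ together with \cref{lm:ca-homom-sfiltjoin-rmult}, and your case split in (iv) (both aperiodic: transport to \cref{lm:k-embed}; a finite part present: use an atomic measure on a single class witnessing the failure) all match the paper's proof.

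The only place you are more cautious than the paper is the ``main obstacle'' you flag in (ii).  The paper disposes of this in one line (``it is easily verified that the above procedure \dots\ agrees, when $\alpha=\chi_A$ and $\beta=\chi_B$, with the computation in $\@K(E)$''), and indeed the verification is routine: on $X_{ff}$ both $E|A,E|B$ are finite and the pointwise minimum of per-class cardinalities is realized by a Borel set; on $X_{fi}$ one has $A\cap X_{fi}\preceq_E B\cap X_{fi}$ outright so both recipes return $\~{A\cap X_{fi}}$; and on $X_{ii}$ the two recipes coincide by construction.  Since $\chi$ is an order-embedding, it then suffices that the $\@L(E)$-meet of $\chi(\~A),\chi(\~B)$ lands in the image of $\chi$, which this piecewise check establishes.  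So your worry about ``different $\sim_E$-representatives'' is not a genuine obstacle: the representatives differ, but their classes in $\@L(E)$ agree.
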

\begin{proof}
(i): $\@L(E)$ is a cardinal algebra by \cref{thm:l-ca}, and clearly has real multiples inherited from $\@C(X)$ (by \cref{lm:ca-homom-sfiltjoin-rmult}); it remains to construct finite meets.  The greatest element of $\@L(E)$ is $\infty := \~\infty$ where $\infty \in \@C(X)$ is the constantly $\infty$ function.  To compute the meet of $\~\alpha, \~\beta$: let $A, B \subseteq X$ be the unions of the $E$-classes on which $\alpha, \beta$ respectively have finite sum, so that $\alpha|A, \beta|B$ are finite while $\alpha|(X \setminus A), \beta|(X \setminus B)$ are aperiodic.  Using \cref{thm:lfin}, the meet of $[\alpha|(A \cap B)]_{\sim_E}, [\beta|(A \cap B)]_{\sim_E}$ is given by $\~\gamma$ where $\sum_E \gamma = \sum_E \alpha|(A \cap B) \wedge \sum_E \beta|(A \cap B)$.  Using \cref{lm:lfin}, the meet of $[\alpha|(A \setminus B)]_{\sim_E}, [\beta|(A \setminus B)]_{\sim_E}$ is the former (since $\beta$ has infinite sum on every $E|(A \setminus B)$-class), and similarly the meet of $[\alpha|(B \setminus A)]_{\sim_E}, [\beta|(B \setminus A)]_{\sim_E}$ is the latter.  Using \cref{thm:chi-aperiodic}, the meet of $[\alpha|(X \setminus (A \cup B))]_{\sim_E}, [\beta|(X \setminus (A \cup B))]_{\sim_E}$ may be computed in $\@K(E)$.  The sum of these four meets is $\~\alpha \wedge \~\beta$.

Note that binary joins in $\@L(E)$ may be computed in a similar manner.

(ii): It is easily verified that the above procedure for computing binary meets and joins in $\@L(E)$ agrees, when $\alpha = \chi_A$ and $\beta = \chi_B$, with the computation in $\@K(E)$.  Using that $E$ is compressible, the greatest element $\infty = \~\infty \in \@L(E)$ is equal to $\~1 = \chi(\~X) = \chi(\infty)$.  So $\chi$ preserves finite meets and (by \cref{lm:ca-homom-sfiltjoin-rmult}) countable joins.  That $\chi$ restricts to an isomorphism $\@K^\ap(E) \cong \@L^\ap(E)$ is \cref{thm:chi-aperiodic}.  For every $\alpha \in \@C(X)$, we can write $\alpha$ as a countable real linear combination $\sum_i r_i \chi_{A_i}$ of characteristic functions of $A_i \in \@B(X)$, whence $\~\alpha = \sum_i r_i \~\chi_{A_i}$; thus the image of $\chi$ generates $\@L(E)$ under real multiples and countable sums.

(iii): By \cref{lm:eqdec-integ} and the succeeding remarks, we have a commutative diagram
\begin{equation*}
\begin{tikzcd}
\INV^*_E \rar \drar["\cong"'] & \{\text{$\sum$-homomorphisms } \@L(E) -> \-{\#R^+}\} \dar["(-)\vert\@K(E)"] \\
& \{\text{$\sum$-homomorphisms } \@K(E) -> \-{\#R^+}\}
\end{tikzcd}
\end{equation*}
The vertical map is injective, since $\@K(E)$ generates $\@L(E)$ under countable sums and real multiples (by (ii)) and $\sum$-homomorphisms $\@L(E) -> \-{\#R^+}$ preserve real multiples (by \cref{lm:ca-homom-sfiltjoin-rmult}).  It follows that the horizontal map is bijective, yielding the first bijection in (iii).  For the second, a $(\sum, \wedge, \bigvee, \-{\#R^+})$-homomorphism $\@L(E) -> \-{\#R^+}$ still preserves finite meets when restricted to $\@K(E)$ by (ii), hence corresponds to an ergodic measure by \cref{lm:k-meas-ergodic}; and conversely, it is easily seen from the computation of binary meets in (i) that an ergodic measure $\mu : \@L(E) -> \-{\#R^+}$ preserves binary meets (hence countable joins and real multiples, by \cref{lm:ca-homom-sfiltjoin-rmult,lm:ca-homom-meetjoin}).

(iv): It suffices to show that for $\~\alpha \not\le \~\beta \in \@L(E)$, there is $\mu \in \EINV^\sigma_E$ such that $\mu(\alpha) > \mu(\beta)$.  By restricting $E$, we may assume that each of $\~\alpha, \~\beta$ is either finite or aperiodic.  If both are aperiodic, apply \cref{thm:chi-aperiodic} and \cref{lm:k-embed}.  If $\~\alpha$ is finite, since $\~\alpha \not\le \~\beta$, by \cref{lm:lfin} there is an $E$-class $C$ such that $\sum_{x \in C} \alpha(x) > \sum_{x \in C} \beta(x)$; let $\mu$ be an atomic measure on $C$.  If $\~\alpha$ is aperiodic while $\~\beta$ is finite, since $\~\alpha \not\le \~\beta$, there is an $E$-class $C$ such that $\sum_{x \in C} \alpha(x) = \infty$; let $\mu$ be an atomic measure on $C$.
\end{proof}

\subsection{The duality theorem}

Regarding $\@K(E)$ and $\@L(E)$ as algebras under the operations $\sum, \wedge, \bigvee, \-{\#R^+}$, \cref{thm:k-embed} and \cref{thm:l}(iv) say that these algebras admit enough homomorphisms to $\-{\#R^+}$ to separate points.  It is thus natural to regard the space of all such homomorphisms, i.e., $\EINV^*_E$, as the ``dual'' or ``spectrum'' of the algebra, and to ask whether we may recover the algebra as an ``algebra of functions'' on the space $\EINV^*_E$ equipped with suitable structure.

We give in this section a positive answer, subject to some technical caveats.  First, since every element of $\-{\#R^+}$ is completely divisible, so will be every element of an ``algebra of functions'' with values in $\-{\#R^+}$; thus we can only hope to recover $\@L(E)$, not $\@K(E)$.  Second, as mentioned previously, non-$\sigma$-finite measures in $\EINV^*_E$ are not so tractable; thus we will consider instead the subspace $\EINV^\sigma_E \subseteq \EINV^*_E$ as the ``dual'' of $\@L(E)$.  Finally, there is the question of what kind of ``space'' $\EINV^\sigma_E$ is.  It is not enough to regard it as a (nonstandard) Borel space, due to \cref{rmk:einvs-borel} below.

A \defn{$\sigma$-topology} on a set $X$ is a collection of subsets of $X$ (called \defn{$\sigma$-open}), closed under countable unions and finite intersections; a \defn{$\sigma$-topological space} is a set equipped with a $\sigma$-topology.  A \defn{$\sigma$-continuous map} between $\sigma$-topological spaces is a map such that the preimage of every $\sigma$-open set is $\sigma$-open.  The notions of \defn{product $\sigma$-topology} and \defn{subspace $\sigma$-topology} are defined in the usual manner (i.e., the smallest $\sigma$-topology making the projection maps, respectively the inclusion, $\sigma$-continuous).  Note that a $\sigma$-topology generates both a topology (by closing under arbitrary unions) and a $\sigma$-algebra (by closing under complements and countable unions), hence contains more information than both a topology and a Borel structure.  In particular, every $\sigma$-continuous map is Borel with respect to the induced Borel structures.

We equip $\-{\#R^+} = [0, \infty]$ with the $\sigma$-topology whose nontrivial $\sigma$-open sets are $(r, \infty]$ for $r \in (0, \infty)$.  We view $\EINV^\sigma_E$ as a $\sigma$-topological subspace of the product space $\-{\#R^+}^{\@B(X)}$; thus, the $\sigma$-topology on $\EINV^\sigma_E$ is generated by the subbasic $\sigma$-open sets
\begin{align*}
U_{A,r} := \{\mu \in \EINV^\sigma_E \mid \mu(A) > r\}
\end{align*}
for $A \in \@B(X)$ and $r \in (0, \infty)$.  In addition, we also equip $\EINV^\sigma_E$ with the multiplication action of the multiplicative monoid $(0, \infty)$. 

\begin{lemma}
For every $\~\alpha \in \@L(E)$, the map
\begin{align*}
\iota(\~\alpha) : \EINV^\sigma_E &--> \-{\#R^+} \\
\mu &|--> \mu(\alpha)
\end{align*}
is $\sigma$-continuous and $(0, \infty)$-equivariant.
\end{lemma}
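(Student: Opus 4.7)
My plan is to handle the two claims separately. The $(0,\infty)$-equivariance is immediate from linearity of integration: for any $r \in (0,\infty)$ and $\mu \in \EINV^\sigma_E$, we have $(r\mu)(\alpha) = \int \alpha\,d(r\mu) = r \int \alpha\,d\mu$, matching the multiplication action of $(0,\infty)$ on $\-{\#R^+}$.

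For $\sigma$-continuity, since the $\sigma$-topology on $\-{\#R^+}$ is generated by the subbasic sets $(s,\infty]$ for $s \in (0,\infty)$, it suffices to show that for each such $s$, the preimage $\{\mu \in \EINV^\sigma_E \mid \mu(\alpha) > s\}$ is $\sigma$-open. I would pick a representative $\alpha \in \@C(X)$ of $\~\alpha$ and, using the fact recorded in the proof of \cref{thm:l}(ii), write $\alpha$ as a countable sum $\alpha = \sum_i r_i \chi_{A_i}$ with $r_i \in (0,\infty)$ and $A_i \in \@B(X)$. By countable additivity of the integral (monotone convergence), $\mu(\alpha) = \sum_i r_i \mu(A_i)$ for every $\mu \in \EINV^\sigma_E$. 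The plan is then to establish the identity
\begin{align*}
\{\mu \in \EINV^\sigma_E \mid \mu(\alpha) > s\} = \bigcup_{n,\,(q_i)_{i<n}} \bigcap_{i<n} U_{A_i, q_i},
\end{align*}
where the union ranges over $n \in \#N$ and tuples $(q_i)_{i<n}$ of positive rationals satisfying $\sum_{i<n} r_i q_i > s$. Since each $U_{A_i,q_i}$ is a subbasic $\sigma$-open set and the right-hand side is a countable union of finite intersections thereof, this exhibits the preimage as $\sigma$-open.

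The main obstacle I anticipate is verifying the forward containment of this identity. Given $\mu$ with $\sum_i r_i \mu(A_i) > s$, I would first use monotone convergence of partial sums to pick $n$ so that $\sum_{i<n} r_i \mu(A_i) > s$, then discard indices $i < n$ with $\mu(A_i) = 0$ (which contribute nothing), approximate each remaining finite $\mu(A_i)$ from below by a rational $q_i$, and take $q_i$ arbitrarily large when $\mu(A_i) = \infty$. A routine $\varepsilon$-argument ensures these rationals can be chosen so that $\sum_{i<n} r_i q_i$ still exceeds $s$, while $\mu(A_i) > q_i$ for each retained index. The reverse containment is immediate from monotonicity of the integral, after which $\sigma$-continuity follows directly.
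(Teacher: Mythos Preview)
Your approach is essentially the same as the paper's: write $\alpha = \sum_i r_i \chi_{A_i}$, use $\mu(\alpha) = \sum_i r_i \mu(A_i)$, and express $\{\mu(\alpha) > s\}$ as a countable union of finite intersections of subbasic $\sigma$-open sets (the paper phrases this as $\sigma$-continuity of $\sum : \-{\#R^+}^\#N \to \-{\#R^+}$ composed with the coordinate maps, which unwinds to exactly your computation). One small correction: your displayed identity should let the intersection range over an arbitrary finite subset of indices rather than an initial segment $\{0,\dotsc,n-1\}$, since as you yourself note, indices with $\mu(A_i)=0$ must be discarded; your prose already handles this correctly, so just adjust the formula to match.
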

\begin{proof}
$(0, \infty)$-equivariance is obvious.  For $\sigma$-continuity, write $\alpha = \sum_i r_i \chi_{A_i}$ as a countable linear combination of positive real multiples of characteristic functions of $A_i \in \@B(X)$, so that $\mu(\alpha) = \sum_i r_i \mu(A_i)$; $\sigma$-continuity of $\iota(\~\alpha)$ thus follows from $\sigma$-continuity of $r_i \cdot (-) : \-{\#R^+} -> \-{\#R^+}$, which is obvious, and of $\sum : \-{\#R^+}^\#N -> \-{\#R^+}$, which is straightforward (since $\sum_i r_i > s \iff \exists n\; \exists i_1, \dotsc, i_n \in \#N\; \exists q_1, \dotsc, q_n \in \#Q\, (q_1 + \dotsb + q_n > s \AND r_{i_1} > q_1 \AND \dotsb \AND r_{i_n} > q_n)$).
\end{proof}

In other words, the map $\iota : \@L(E) -> \-{\#R^+}^{\EINV^\sigma_E}$ from \cref{thm:l}(iv) lands in the subalgebra of $\sigma$-continuous, $(0, \infty)$-equivariant maps $\EINV^\sigma_E -> \-{\#R^+}$.  We now have the following duality theorem:

\begin{theorem}
\label{thm:l-duality}
The map
\begin{align*}
\iota : \@L(E) &--> \{\text{$\sigma$-continuous, $(0, \infty)$-equivariant maps } \EINV^\sigma_E -> \-{\#R^+}\}
\end{align*}
is an $(\sum, \wedge, \bigvee, \-{\#R^+})$-isomorphism.
\end{theorem}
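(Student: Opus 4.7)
The plan is to exploit \cref{thm:l}(iv), which already makes $\iota$ an $(\sum, \wedge, \bigvee, \-{\#R^+})$-embedding, and to establish surjectivity directly: given a $\sigma$-continuous, $(0,\infty)$-equivariant $f : \EINV^\sigma_E -> \-{\#R^+}$, I will reconstruct a preimage $\~\alpha \in \@L(E)$ from a normal-form description of the single super-level set $f^{-1}((1,\infty])$, and use equivariance to upgrade agreement at the level $1$ to equality of functions.

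The first step is to put the $\sigma$-open set $f^{-1}((1,\infty]) \subseteq \EINV^\sigma_E$ into normal form. Since the $\sigma$-topology is generated by the subbasic sets $U_{A,s}$ under finite intersections and countable unions, and since finite intersections distribute over countable unions, we may write
\begin{align*}
f^{-1}((1,\infty]) = \bigcup_n \bigcap_{i < k_n} U_{A_{n,i}, s_{n,i}}
\end{align*}
for some $A_{n,i} \in \@B(X)$ and $s_{n,i} \in (0, \infty)$. Using the finite meets, countable joins, and real multiples of arbitrary elements available in $\@L(E)$ by \cref{thm:l}(i), I then set
\begin{align*}
\~\alpha := \bigvee_n \bigwedge_{i < k_n} \frac{1}{s_{n,i}} \cdot \~\chi_{A_{n,i}} \in \@L(E).
\end{align*}

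The second step is to verify $\iota(\~\alpha) = f$. Since $\iota$ preserves the relevant operations by \cref{thm:l}(iv), and each $\mu \in \EINV^\sigma_E$ viewed as a homomorphism $\@L(E) -> \-{\#R^+}$ also preserves them by \cref{thm:l}(iii), one computes
\begin{align*}
\iota(\~\alpha)(\mu) = \sup_n \min_{i < k_n} \frac{\mu(A_{n,i})}{s_{n,i}},
\end{align*}
so that $\iota(\~\alpha)(\mu) > 1$ iff $\mu \in f^{-1}((1,\infty])$. Both $f$ and $\iota(\~\alpha)$ being $(0,\infty)$-equivariant, for any $r > 0$ and any $\mu$,
\begin{align*}
f(\mu) > r \iff f(\mu/r) > 1 \iff \iota(\~\alpha)(\mu/r) > 1 \iff \iota(\~\alpha)(\mu) > r,
\end{align*}
so $f$ and $\iota(\~\alpha)$ have the same super-level set at every positive $r$, and therefore agree as maps into $\-{\#R^+}$.

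I do not anticipate a serious obstacle. The heart of the argument is the elementary observation that each generating subbasic $\sigma$-open is of the form $U_{A,s} = \{\iota((1/s)\cdot\~\chi_A) > 1\}$, so the closure of the class of such super-level sets under finite intersection, countable union, and positive scaling is mirrored by the closure of $\@L(E)$ under finite meets, countable joins, and real multiples, all of which is packaged into \cref{thm:l}. Degenerate cases are absorbed by the usual conventions on empty joins and meets: an empty outer union gives $\~\alpha = 0 \in \@L(E)$ and forces $f \equiv 0$ by equivariance, while an empty inner meet gives the top element $\~\infty \in \@L(E)$ and forces $f \equiv \infty$.
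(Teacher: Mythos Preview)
Your argument is correct and is genuinely different from the paper's proof. The paper establishes surjectivity by invoking \cref{thm:topmeasdecomp}: given $f$, it chooses a Polish topology on $X$ making the finitely many Borel sets appearing in a normal form for all $f^{-1}((r,\infty])$ open, takes the resulting topological/measure-theoretic ergodic decomposition $p : X \twoheadrightarrow X//E$, and defines an explicit Borel representative $\alpha \in \@C(X)$ by setting $\alpha(x) := f([\mu_{\den{x}_E}]_E)$ on the part of $X$ carrying a non-totally-singular invariant measure (and handling the remaining totally singular components separately). Verifying $\iota(\~\alpha) = f$ then requires a somewhat delicate case analysis according to whether $\mu$ is totally singular and whether its component lies in $R$.

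Your approach stays entirely inside $\@L(E)$: you read off the normal form of the single super-level set $f^{-1}((1,\infty])$, mirror it by the element $\bigvee_n \bigwedge_{i<k_n} (1/s_{n,i})\cdot\~\chi_{A_{n,i}}$, and use $(0,\infty)$-equivariance to propagate agreement at level $1$ to all levels. This is shorter and avoids the machinery of \cref{sec:topmeasdecomp} altogether; it relies only on \cref{thm:l}(i),(iii),(iv), which are proved independently of that section. What the paper's route buys is an explicit Borel $\alpha \in \@C(X)$ (not just a $\sim_E$-class) and, more conceptually, it exhibits \cref{thm:l-duality} as an application of the topological realization of the ergodic decomposition, which is one of the paper's main themes. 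Your route shows that the duality is really a formal consequence of the algebraic structure already established for $\@L(E)$.
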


\begin{remark}
\label{rmk:einvs-borel}
\Cref{thm:l-duality} fails if we replace ``$\sigma$-continuous'' with ``Borel'' (with the nonstandard Borel structure on $\EINV^\sigma_E$ generated by the maps $\mu |-> \mu(A)$ for $A \in \@B(X)$).  Indeed, let $(X, E) = (\#R, E_v)$ where $E_v$ is the Vitali equivalence relation, and let $f : \EINV^\sigma_E -> \-{\#R^+}$ be given by $f(\mu) := 0$ if $\mu((0, 1)) < \infty$, else $f(\mu) := \infty$.  Clearly $f$ is Borel and $(0, \infty)$-equivariant.  Suppose we had $f = \iota(\~\alpha)$ for some $\~\alpha \in \@L(E)$.  For atomic $\mu \in \EINV^\sigma_E$, we have $\mu((0, 1)) = \infty$, whence $\infty = f(\mu) = \iota(\~\alpha)(\mu) = \int \alpha \,d\mu$; so $\alpha$ must be nonzero on every coset of $\#Q$.  But then for Lebesgue measure $\mu$, we have $\iota(\~\alpha)(\mu) = \int \alpha \,d\mu > 0 = f(\mu)$, a contradiction.
\end{remark}

Recall from \cref{thm:meas-extend} that for a Borel set $A \in \@B(X)$ and an $E|A$-ergodic invariant probability measure $\mu \in \EINV_{E|A}$,
\begin{align*}
[\mu]_E \in \EINV^\sigma_E
\end{align*}
denotes the unique $E$-ergodic invariant extension of $\mu$.  From the definition of $[\mu]_E$ in \cref{thm:meas-extend}, we clearly have

\begin{lemma}
\label{lm:measure-ext-borel}
The map $[-]_E : \EINV_{E|A} -> \EINV^\sigma_E$ is Borel (where $\EINV_{E|A}$ has the usual standard Borel structure).  \qed
\end{lemma}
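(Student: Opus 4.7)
The plan is to use the explicit formula for $[\mu]_E$ provided in \cref{thm:meas-extend} and verify Borel measurability coordinatewise. The Borel structure on $\EINV^\sigma_E$ is generated by the evaluation maps $\mu \mapsto \mu(B)$ for $B \in \@B(X)$, as described in the $\sigma$-topology discussion preceding \cref{thm:l-duality} (and referenced in \cref{rmk:einvs-borel}). So it suffices to show that for each fixed Borel $B \subseteq X$, the composite
\[
\EINV_{E|A} \xrightarrow{[-]_E} \EINV^\sigma_E \xrightarrow{\mathrm{ev}_B} [0, \infty],
\qquad \mu \longmapsto [\mu]_E(B),
\]
is Borel.

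First I would fix a Borel enumeration $\Gamma = \{\gamma_0, \gamma_1, \dotsc\}$ of the group acting on $X$ and inducing $E$, and form the pairwise disjoint Borel sets $B_i := (\gamma_i \cdot A) \setminus \bigcup_{j<i}(\gamma_j \cdot A)$ as in \cref{thm:meas-extend}. Then for each $i$ the set $C_i := \gamma_i^{-1} \cdot (B \cap B_i) \subseteq A$ is a \emph{fixed} Borel subset of $A$, depending on $B$ but not on $\mu$. By \cref{thm:meas-extend},
\[
[\mu]_E(B) = \sum_{i} \mu(C_i),
\]
so it suffices to show that for each fixed Borel $C \subseteq A$, the map $\mu \mapsto \mu(C)$ is Borel on $\EINV_{E|A}$, and that countable sums preserve Borelness into $[0, \infty]$.

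The first is immediate: since $\EINV_{E|A} \subseteq \@P(A)$ carries the usual standard Borel structure from the space of probability measures \cite[\S17.E]{Kcdst}, the evaluation $\mu \mapsto \mu(C)$ is Borel by definition. The second is the standard fact that pointwise countable sums of Borel $[0,\infty]$-valued functions are Borel (via partial sums as an increasing limit). Combining these, $\mu \mapsto [\mu]_E(B) = \sum_i \mu(C_i)$ is Borel for every $B \in \@B(X)$, which is exactly what is needed.

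There is no real obstacle here, since \cref{thm:meas-extend} gives a formula that is manifestly Borel in $\mu$; the only mild subtlety is being clear about the (nonstandard) Borel structure on $\EINV^\sigma_E$ and confirming that it is indeed generated by evaluations at Borel sets, so that checking Borel measurability on the generating family suffices.
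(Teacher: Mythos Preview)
Your proposal is correct and follows exactly the paper's intended argument: the paper simply notes that the result is clear ``from the definition of $[\mu]_E$ in \cref{thm:meas-extend}'' and marks the lemma with \qed, and you have spelled out precisely that verification (coordinatewise Borelness via the explicit formula $[\mu]_E(B) = \sum_i \mu(\gamma_i^{-1}\cdot(B\cap B_i))$).
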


\begin{proof}[Proof of \cref{thm:l-duality}]
By \cref{thm:l}(iv), it remains only to show that $\iota$ is surjective.
Let $f : \EINV^\sigma_E -> \-{\#R^+}$ be $\sigma$-continuous and $(0, \infty)$-equivariant.  So for each rational $r \in (0, \infty)$, $f^{-1}((r, \infty]) \subseteq \EINV^\sigma_E$ is a countable union of finite intersections of subbasic $\sigma$-open sets $U_{A,r}$; let $\@A$ denote all countably many $A \in \@B(X)$ involved in these expressions (for all $r$).  In particular, $f(\mu)$ can only depend on the values of $\mu(A)$ for $A \in \@A$.

Let $\Gamma$ be a countable group with a Borel action on $X$ inducing $E$.  Equip $X$ with a Polish topology given by \cref{thm:topmeasdecomp}, making every $A \in \@A$ open and making the $\Gamma$-action continuous, so that we have a topological ergodic decomposition $p : X ->> X//E$ which is also a measure-theoretic ergodic decomposition.  Let the Borel sets $R \subseteq X//E$ and $S \subseteq X$ and the Borel map $(C |-> \mu_C) : R -> \EINV_{E|S}$ be given by \cref{thm:topmeasdecomp}.

For $C \in X//E \setminus R$, let $\nu_C \in \EINV^\sigma_E$ be an $E$-ergodic invariant $\sigma$-finite measure supported on $C$, such that $C |-> \nu_C$ is Borel; for example, compose a Borel section of $p : X ->> X//E$ (which exists by \cref{thm:topdecomp}) with a Borel map $X -> \EINV^\sigma_E$ taking a point $x$ to an atomic measure on $[x]_E$.  Note that by definition of $R$, each $\nu_C$ is necessarily totally singular.

Note that for any totally singular $\mu \in \EINV^\sigma_E$, we have $\mu(A) = 2\mu(A)$ for all $A \in \@A$, whence $f(\mu) = f(2\mu) = 2f(\mu)$ since $f$ is $(0, \infty)$-equivariant, whence $f(\mu) \in \{0, \infty\}$.

Now define $\alpha \in \@C(X)$ by
\begin{align*}
\alpha(x) := \begin{cases}
f([\mu_{\den{x}_E}]_E) &\text{if $x \in S$}, \\
0 &\text{if $x \in p^{-1}(R) \setminus S$}, \\
f(\nu_{\den{x}_E}) &\text{if $x \not\in p^{-1}(R)$}.
\end{cases}
\end{align*}
By \cref{lm:measure-ext-borel}, this is Borel.  We claim that $f = \iota(\~\alpha)$, i.e., $f(\mu) = \iota(\~\alpha)(\mu) = \int \alpha \,d\mu$ for all $\mu \in \EINV^\sigma_E$.  Note that by ergodicity, each $\mu$ is supported on some $C \in X//E$.
\begin{itemize}

\item  Suppose $\mu$ is non-totally-singular, hence supported on some $C \in R$.  Then (by \cref{thm:topmeasdecomp}(ii)) $\mu = r [\mu_C]_E$ for some $r \in (0, \infty)$.  We have
\begin{align*}
\begin{aligned}
\int \alpha \,d\mu
&= r \int \alpha \,d[\mu_C]_E \\
&= r f([\mu_C]_E) [\mu_C]_E(S \cap C) &&\text{by definition of $\alpha$} \\
&= f(r [\mu_C]_E) &&\text{by $(0, \infty)$-equivariance of $f$} \\
&= f(\mu),
\end{aligned}
\end{align*}
as desired.

\item  Suppose $\mu$ is totally singular and supported on some $C \in X//E \setminus R$.  Since $\mu$ is totally singular, $f(\mu) \in \{0, \infty\}$.  For each $A \in \@A$, since $A \subseteq X$ is open (and so meets $C$ iff it meets every $E$-class in $C$), we have $\mu(A) = \nu_C(A) = \infty$ if $A \cap C \ne \emptyset$, otherwise $\mu(A) = \nu_C(A) = 0$; thus $f(\mu) = f(\nu_C) \in \{0, \infty\}$.  So by definition of $\alpha$, $\int \alpha \,d\mu = f(\nu_C) \cdot \mu(C) = f(\nu_C) \cdot \infty = f(\mu)$.

\item  Finally, suppose $\mu$ is totally singular and supported on some $C \in R$.  We again have $f(\mu) \in \{0, \infty\}$, while by definition of $\alpha$,
\begin{align*}
\int \alpha \,d\mu
= f([\mu_C])_E \cdot \mu(S)
= \begin{cases}
0 &\text{if $f([\mu_C]_E) = 0$}, \\
\infty &\text{if $f([\mu_C]_E) > 0$}.
\end{cases}
\end{align*}
So we must show that $f(\mu) > 0$ iff $f([\mu_C]_E) > 0$.  By definition of $\@A$, we may write
\begin{align*}
f^{-1}((0, \infty]) = \bigcup_{i \in I} \bigcap_{j \in J_i} U_{A_{ij}, r_{ij}}
\end{align*}
where $I$ is countable, each $J_i$ is finite, each $A_{ij} \in \@A$, and each $r_{ij} \in (0, \infty)$.  If $[\mu_C]_E \in U_{A_{ij}, r_{ij}}$ for some $i, j$, i.e., $[\mu_C]_E(A_{ij}) > r_{ij}$, then $A_{ij}$ must meet $C$, whence $\mu(C) = \infty > r_{ij}$, whence $\mu \in U_{A_{ij}, r_{ij}}$; thus $f([\mu_C]_E) > 0$ implies $f(\mu) > 0$.  Conversely, if $f(\mu) > 0$, then $\mu \in \bigcap_{j \in J_i} U_{A_{ij}, r_{ij}}$ for some $i$, whence each $A_{ij}$ for $j \in J_i$ must meet $C$, whence $[\mu_C]_E(A_{ij}) > 0$ for each $j \in J_i$; since $J_i$ is finite, there is some $r \in (0, \infty)$ such that $r[\mu_C]_E(A_{ij}) > r_{ij}$ for each $j \in J_i$, whence $r[\mu_C]_E \in \bigcap_{j \in J_i} U_{A_{ij}, r_{ij}}$, whence $rf([\mu_C]_E) = f(r[\mu_C]_E) > 0$, whence $f([\mu_C]_E) > 0$.  Thus $f(\mu) > 0$ iff $f([\mu_C]_E) > 0$, as desired.
\qedhere

\end{itemize}
\end{proof}

\subsection{Other algebraic operations}

We conclude by briefly considering the (non)existence of other canonical algebraic operations on $\@K(E)$ and $\@L(E)$.

\begin{remark}
Countable decreasing meets do not exist in general; indeed, a countable decreasing sequence in $\@K(E)$ need not have a meet in $\@L(E)$.  Consider the Vitali equivalence relation $(\#R, E_v)$.  For each $n \ge 1$, let $A_n := [0, 1/n) \in \@B(\#R)$.  Then for Lebesgue measure $\mu$, we have $\mu(A_n) = 1/n$; so if $\bigwedge_n \~\chi_{A_n}$ existed, we must have $\mu(\bigwedge_n \~\chi_{A_n}) = 0$.  But for each $x \in \#R$, we clearly have $\#Q + x \preceq_E A_n$ for all $n$, whence $\~\chi_{\#Q + x} \le \bigwedge_n \~\chi_{A_n}$.  So $\bigwedge_n \~\chi_{A_n}$ must be represented by a function in $\@C(X)$ which is nonzero on each $E_v$-class, whence $\mu(\bigwedge_n \~\chi_{A_n}) > 0$, a contradiction.
\end{remark}

\begin{remark}
Given $\~A \le \~B \in \@K(E)$, there is by definition some $\~C$ such that $\~A + \~C = \~B$.  However, there does not seem to be a canonical choice of such a $\~C$ that works for all $\~A, \~B$.  In other words, there does not seem to be a canonical way of defining a partial ``difference'' operation $\~B - \~A$ for all $\~A \le \~B$, such that $\~A + (\~B - \~A) = \~B$.  The same is true in $\@L(E)$.

In particular, there is not always a smallest or largest $\~C$.  Consider the tail equivalence relation $(2^\#N, E_t)$ and a Borel complete section $A \subseteq 2^\#N$ with $E_t|A \cong E_0$ (where $E_0$ is equality modulo finite on $2^\#N$; the existence of such a complete section $A$ is standard, following for example from \cite[9.3]{DJK}).  Then there is no smallest $\~\alpha \in \@L(E_t)$ (or $\~A' \in \@K(E_t)$) such that $\~\chi_A + \~\alpha = \infty$ (or $\~A + \~A' = \infty$): the union $B$ of the $E_t$-classes on which $\alpha$ is zero must be such that $E_t|(A \cap B)$ is compressible, or else we could not have $\~\chi_A + \~\alpha = \infty$; and given such $\alpha$, we can always make $\alpha$ zero on a single class outside $B$ to get a strictly smaller $\~\beta < \~\alpha$ with $\~\chi_A + \~\beta = \infty$.  And there is no largest $\~\alpha \in \@L(E_t)$ (or $\~A' \in \@K(E_t)$) such that $\~\chi_A + \~\alpha = \~\chi_A$ (or $\~A + \~A' = \~A$): such $\alpha$ are precisely those for which the union $C$ of the $E_t$-classes on which $\alpha$ is nonzero has $E_t|(A \cap C)$ compressible.
\end{remark}

\appendix

\section{Inverse limits of quasi-Polish spaces}
\label{app:lim}

We prove here some technical results regarding inverse limits of quasi-Polish spaces.

\begin{proposition}
\label{thm:dense-cofiltlim}
Let $X_0 <---{f_0} X_1 <---{f_1} X_2 <---{f_2} \dotsb$ be a sequence of continuous maps between quasi-Polish spaces, such that each $f_i$ has dense image.  Then each projection map $p_i : \projlim_i X_i -> X_i$ has dense image.
\end{proposition}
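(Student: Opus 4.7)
The plan is a Baire-category-style construction adapted to the quasi-Polish setting, together with a simple reduction that shifts the index.

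\textbf{Reduction to $i = 0$.} I would first observe that it suffices to prove the case $i = 0$. Indeed, given a nonempty open $U \subseteq X_i$, if we can produce a compatible tail sequence $(x_j)_{j \geq i} \in \projlim_{j \geq i} X_j$ with $x_i \in U$, we extend backward by setting $x_j := f_j \circ f_{j+1} \circ \dotsb \circ f_{i-1}(x_i)$ for $j < i$. The resulting $(x_j)_{j \geq 0}$ lies in $\projlim_j X_j$ and projects into $U$ under $p_i$. Reindexing the tail inverse system, we reduce to showing that $p_0$ has dense image.

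\textbf{Main construction.} Fix a Smyth-complete admissible quasi-metric $d_j$ on each quasi-Polish $X_j$, as provided by de~Brecht's characterization. Given nonempty open $U \subseteq X_0$, I would construct by induction on $n$ nonempty open sets $V_j^{(n)} \subseteq X_j$ for $0 \leq j \leq n$ satisfying:
\begin{itemize}
\item $V_0^{(0)} \subseteq U$;
\item $\overline{V_j^{(n+1)}} \subseteq V_j^{(n)}$ for $j \leq n$ (nesting with closures);
\item $f_j(V_{j+1}^{(n)}) \subseteq V_j^{(n)}$ for $j < n$ (compatibility);
\item $d_j$-diameter of $V_j^{(n)}$ is $< 2^{-n}$.
\end{itemize}
At stage $n+1$, the crucial ingredient is that $f_n^{-1}(V_n^{(n)}) \subseteq X_{n+1}$ is nonempty open by density of $f_n(X_{n+1})$ in $X_n$. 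Pick a basepoint $y \in f_n^{-1}(V_n^{(n)})$ and use continuity of the finitely many composed maps $f_j \circ \dotsb \circ f_n : X_{n+1} \to X_j$ for $j \leq n$ at $y$ to find a nonempty open neighborhood $V_{n+1}^{(n+1)} \subseteq f_n^{-1}(V_n^{(n)})$ small enough that each composed image has $d_j$-diameter less than $2^{-(n+1)}/2$. The refinements $V_j^{(n+1)} \subseteq V_j^{(n)}$ for $j \leq n$ can then be taken as suitable small open neighborhoods of the images $f_j \circ \dotsb \circ f_n(V_{n+1}^{(n+1)})$ inside $V_j^{(n)}$, simultaneously securing compatibility $f_j(V_{j+1}^{(n+1)}) \subseteq V_j^{(n+1)}$, the closure nesting, and the diameter bound.

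\textbf{Extracting the limit.} For each $j$, Smyth-completeness of $d_j$ together with the nested, diameter-shrinking family $V_j^{(n)}$ yields a point $x_j \in \bigcap_{n \geq j} V_j^{(n)}$. Continuity of $f_j$ together with $f_j(V_{j+1}^{(n)}) \subseteq V_j^{(n)}$ for all $n$ forces $f_j(x_{j+1})$ to lie in this same intersection, and the shrinking diameters then force $f_j(x_{j+1}) = x_j$. Hence $(x_j)_j \in \projlim_j X_j$ with $x_0 \in V_0^{(0)} \subseteq U$, completing the proof.

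\textbf{Main obstacle.} The principal difficulty is engineering a single inductive step that simultaneously preserves compatibility $f_j(V_{j+1}^{(n+1)}) \subseteq V_j^{(n+1)}$ and the diameter bound on every $V_j^{(n+1)}$ for $j \leq n$. This is resolved by the two-stage choice above: first shrink $V_{n+1}^{(n+1)}$ enough (using continuity of the $n+1$ composed maps at a chosen point) so that all of its images are already small, then define the lower-indexed refinements as small open neighborhoods of those images. A secondary subtlety is the non-Hausdorff nature of quasi-Polish spaces, which requires the ``closure'' and ``diameter'' conditions to be interpreted via the Smyth structure of a complete quasi-metric rather than ordinary metric completeness.
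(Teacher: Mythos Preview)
Your approach is genuinely different from the paper's. The paper constructs a single auxiliary quasi-Polish space, the ``lax colimit'' $X' := \bigsqcup_i X_i \sqcup \projlim_i X_i$, equipped with basic open sets $\Up U := \bigcup_{j\ge i}(f_i\circ\dotsb\circ f_{j-1})^{-1}(U)\cup p_i^{-1}(U)$ for open $U\subseteq X_i$; it then observes that each $\Up X_i$ is dense open in $X'$ (using only that the $f_i$ have dense image) and invokes the Baire category theorem for quasi-Polish spaces to conclude that $\projlim_i X_i = \bigcap_i \Up X_i$ is dense in $X'$, which immediately gives density of each $p_i$. No quasi-metrics appear at all.

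Your direct nested-sets construction is the natural adaptation of the Polish-space argument, but the step you label ``secondary'' is in fact the crux, and your sketch does not resolve it. Concretely: the closure-nesting condition $\overline{V_j^{(n+1)}}\subseteq V_j^{(n)}$ cannot in general be achieved in a non-regular space (already in the Sierpi\'nski space $\{0<1\}$ the only open set whose closure is contained in $\{1\}$ is $\emptyset$). Dropping closures and relying only on shrinking $d_j$-diameters is also insufficient as stated: Smyth-completeness guarantees a limit $x$ of a $d_j^s$-Cauchy sequence $x_n\in V_j^{(n)}$, but knowing $d_j(x,x_n)\to 0$ and that the $V_j^{(m)}$ are $d_j$-open does not place $x$ inside $V_j^{(m)}$; for that you need control of $d_j(x_n,x)$, i.e., convergence in the symmetrized metric. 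One way to repair this is to choose the quasi-metrics so that $d_j^s$ is itself complete (available by embedding each $X_j$ as a $\Pi^0_2$ subset of $\mathbb{S}^{\mathbb{N}}$ with its canonical quasi-metric, whose symmetrization is the complete Cantor metric) and then to nest via forward balls $V_j^{(n+1)}\subseteq B_{d_j}^+(c,r/2)\subseteq B_{d_j}^+(c,r)\subseteq V_j^{(n)}$ so that the strict ball inequalities survive in the $d_j^s$-limit; but this, together with threading the $\Pi^0_2$ conditions defining $X_j$ into the induction, is real additional work beyond what your outline supplies. The paper's Baire-category argument sidesteps all of it.
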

\begin{proof}
The \defn{lax colimit} of the sequence is the space
\begin{align*}
X' := \bigsqcup_i X_i \sqcup \projlim_i X_i,
\end{align*}
with topology given by the basic open sets
\begin{align*}
\Up{U} := \bigcup_{j \ge i} (f_i \circ \dotsb \circ f_{j-1})^{-1}(U) \cup p_i^{-1}(U) \qquad\text{for $i \in \#N$ and open $U \subseteq X_i$}.
\end{align*}

\begin{lemma}
$X'$ is quasi-Polish.
\end{lemma}
\begin{proof}
Given a quasi-Polish space $X$, the space
\begin{align*}
X_\bot := X \sqcup \{\bot\}
\end{align*}
with open sets consisting of open sets in $X$ together with all of $X_\bot$ is easily seen to be quasi-Polish; see e.g., \cite[3.4]{Ch}.  We claim that we have a homeomorphism
\begin{align*}
X' &--> \{(x_i)_i \in \prod_i (X_i)_\bot \mid x_0 \in X_0 \AND \forall i\, (x_{i+1} \in X_{i+1} \implies x_i = f_i(x_{i+1}))\} \\
x \in X_i &|--> ((f_0 \circ \dotsb \circ f_{i-1})(x), \dotsc, f_{i-1}(x), x, \bot, \bot, \dotsc), \\
y \in \projlim_i X_i &|--> (p_0(y), p_1(y), \dotsc).
\end{align*}
This is easily seen to be a bijection.  A subbasic open set in $\prod_i (X_i)_\bot$ consists of all $(x_i)_i$ such that $x_i \in U$, for some $i \in \#N$ and open $U \subseteq X_i$; the preimage of such a set is precisely $\Up U$.
\end{proof}

Each $\Up X_i = \bigcup_{j \ge i} X_j \cup \projlim_i X_i \subseteq X'$ is dense: given nonempty basic open $\Up U \subseteq X'$, with $U \subseteq X_j$ open, from the definition of $\Up U$ we must have $U \ne \emptyset$; then since $f_j, f_{j+1}, \dotsc$ have dense image, we must have $f_j^{-1}(U), f_{j+1}^{-1}(f_j^{-1}(U)), \dotsc \ne \emptyset$, whence for any $k \ge i, j$, there is some $x \in (f_j \circ \dotsb \circ f_{k-1})^{-1}(U)$, whence $x \in \Up U \cap \Up X_i$.  It follows by Baire category that $\projlim_i X_i = \bigcap_i \Up X_i \subseteq X'$ is dense.  Thus, for any nonempty open $U \subseteq X_i$, we have $p_i^{-1}(U) = \Up U \cap \projlim_i X_i \ne \emptyset$, i.e., $p_i$ has dense image.
\end{proof}

\begin{proposition}
\label{thm:opensurj-filtcolim}
Let
\begin{equation*}
\begin{tikzcd}
X_0 \dar["h_0"'] & X_1 \dar["h_1"'] \lar["f_0"'] & X_2 \dar["h_2"'] \lar["f_1"'] & \dotsb \lar["f_2"'] & X = \projlim_i X_i \lar["p_i"'] \dar["h = \projlim_i h_i"] \\
Y_0 & Y_1 \lar["g_0"] & Y_2 \lar["g_1"] & \dotsb \lar["g_2"] & Y = \projlim_i Y_i \lar["q_i"]
\end{tikzcd}
\end{equation*}
be a commutative diagram of quasi-Polish spaces, where the $h_i$ are open, the $p_i : X -> X_i$ and $q_i : Y -> Y_i$ are the limit projections, and for all $i$ and open $U \subseteq X_i$ we have the following \defn{Beck--Chevalley condition}:
\begin{align*}
h_{i+1}(f_i^{-1}(U)) = g_i^{-1}(h_i(U))
\end{align*}
(the $\subseteq$ containment is automatic).  Then $h$ is open, and satisfies for open $U \subseteq X_i$
\begin{align*}
h(p_i^{-1}(U)) = q_i^{-1}(h_i(U)).  \tag{$*$}
\end{align*}
In particular, if $h_0$ is surjective, then so is $h$.
\end{proposition}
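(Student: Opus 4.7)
The plan is to reduce everything to the equality $(*)$: $h(p_i^{-1}(U)) = q_i^{-1}(h_i(U))$. The inclusion $h(p_i^{-1}(U)) \subseteq q_i^{-1}(h_i(U))$ is immediate from commutativity $q_i \circ h = h_i \circ p_i$. Once $(*)$ is established, openness of $h$ is automatic, because the sets $p_i^{-1}(U)$ for open $U \subseteq X_i$ generate the topology of $X$ and $(*)$ expresses their $h$-images as open subsets of $Y$. The surjectivity corollary follows from $(*)$ via a short induction: applying Beck--Chevalley with $U = X_i$ gives $h_{i+1}(X_{i+1}) = h_{i+1}(f_i^{-1}(X_i)) = g_i^{-1}(h_i(X_i))$, so if $h_0(X_0) = Y_0$ then $h_i(X_i) = Y_i$ for every $i$, and then $(*)$ at $i = 0$ with $U = X_0$ yields $h(X) = q_0^{-1}(Y_0) = Y$.

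For the substantive inclusion in $(*)$, I would fix $y \in q_i^{-1}(h_i(U))$ and construct an $x \in h^{-1}(y)$ with $p_i(x) \in U$ by working with the fibers $Z_j := h_j^{-1}(\{q_j(y)\}) \subseteq X_j$. Singletons in a quasi-Polish space are $\Pi^0_2$ (since the diagonal is $\Pi^0_2$, as already invoked in \cref{thm:topdecomp-filtcolim}), so each $Z_j$ is a $\Pi^0_2$ subspace of $X_j$ and is therefore itself quasi-Polish. By commutativity, the map $f_j$ restricts to a continuous map $Z_{j+1} \to Z_j$, and under the canonical identification $\projlim_j Z_j \cong h^{-1}(y) \subseteq X$, the limit projections become the restrictions of the $p_j$.

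The crux is to verify that each restricted transition map $Z_{j+1} \to Z_j$ has dense image, which is exactly where the Beck--Chevalley hypothesis enters: for open $V \subseteq X_j$, one has $V \cap Z_j \ne \emptyset$ iff $q_j(y) \in h_j(V)$ iff $q_{j+1}(y) \in g_j^{-1}(h_j(V)) = h_{j+1}(f_j^{-1}(V))$, which furnishes some $x_{j+1} \in Z_{j+1}$ with $f_j(x_{j+1}) \in V$. Applying \cref{thm:dense-cofiltlim} to $(Z_j, f_j|_{Z_{j+1}})$ then shows that the restricted $i$th projection $h^{-1}(y) \to Z_i$ has dense image. Since $U \cap Z_i$ is a nonempty open subset of $Z_i$ --- nonempty because $q_i(y) \in h_i(U)$ supplies some $x_i \in U$ with $h_i(x_i) = q_i(y)$ --- the projection meets $U \cap Z_i$, producing the required $x$. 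The main obstacle is really the setup needed to view the fibers $Z_j$ as genuine quasi-Polish spaces so that \cref{thm:dense-cofiltlim} applies; once this is in place, Beck--Chevalley translates directly into the density statement, and the rest is formal.
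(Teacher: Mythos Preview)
Your proof is correct and follows essentially the same approach as the paper: both arguments pass to the fiber sequence $Z_j = h_j^{-1}(q_j(y))$, use Beck--Chevalley to show the restricted transition maps have dense image, and then invoke \cref{thm:dense-cofiltlim}. The only difference is organizational: the paper first reduces $(*)$ to the surjectivity statement (by replacing $X_j$ with the iterated preimages of $U$ and $Y_j$ with their $h_j$-images, so that surjectivity of the new $h$ is exactly $(*)$), whereas you prove $(*)$ directly and read off surjectivity as the special case $U = X_0$; your route avoids the substitution step at the cost of tracking the open set $U$ through the fiber argument, but the content is the same.
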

\begin{proof}
It is enough to prove that if $h_0$ is surjective, then so is $h$.  Indeed, granting this, we may prove ($*$) (in which only $\supseteq$ is nontrivial) by truncating part of the diagram if necessary and assuming $i = 0$, then replacing $X_0$ with $U$, $Y_0$ with $h_0(U)$, $X_1$ with $f_0^{-1}(U)$, $Y_1$ with $h_1(f_0^{-1}(U)) = g_0^{-1}(h_0(U))$ (by Beck--Chevalley), $X_2$ with $f_1^{-1}(f_0^{-1}(U))$, etc., so that $X$ becomes $p_0^{-1}(U)$ and $Y$ becomes $q_0^{-1}(h_0(U))$.  Since $\projlim_i X_i$ has a basis of open sets consisting of $p_i^{-1}(U)$ for open $U \subseteq X_i$, it follows from ($*$) that $h$ is open.

So assume $h_0$ is surjective, and let $y \in Y$.  Then $h_0^{-1}(q_0(y)) \ne \emptyset$.  For each $i$, the restriction $f_i : h_{i+1}^{-1}(q_{i+1}(y)) -> h_i^{-1}(q_i(y))$ has dense image, since for open $U \subseteq X_i$ such that $U \cap h_i^{-1}(q_i(y)) \ne \emptyset$, i.e., $g_i(q_{i+1}(y)) = q_i(y) \in h_i(U)$, we have $q_{i+1}(y) \in g_i^{-1}(h_i(U)) = h_{i+1}(f_i^{-1}(U))$ by Beck--Chevalley, i.e., $f_i^{-1}(U) \cap h_{i+1}^{-1}(q_{i+1}(y)) \ne \emptyset$.  Thus by \cref{thm:dense-cofiltlim}, $h^{-1}(y) = \projlim_i h_i^{-1}(q_i(y)) \ne \emptyset$.
\end{proof}

\begin{remark}
Simpler but more abstract proofs of these results may be given using the correspondence between quasi-Polish spaces and countably presented locales \cite{Hec}.
\end{remark}

\bigskip\noindent
Department of Mathematics \\
University of Illinois at Urbana--Champaign \\
Urbana, IL 61801 \\
\medskip
\nolinkurl{ruiyuan@illinois.edu}

\end{document}